\newcommand\E{{\mathbb E}}
\newcommand\N{{\mathbb N}}
\newcommand\R{{\mathbb R}}
\newcommand\Sp{{\mathbb S}}
\newcommand\PPP{\mathbf P}
\newcommand\RRR{\mathbf R}
\DeclareMathOperator{\dist}{dist}
\DeclareMathOperator{\supp}{supp}
\DeclareMathOperator{\Div}{div}
\def\BB{{\mathcal B}}
\def\EE{{\mathcal E}}
\def\FF{{\mathcal F}}
\def\GG{{\mathcal G}}
\def\HH{{\mathcal H}}
\def\II{{\mathcal I}}
\def\LL{{\mathcal L}}
\def\MM{{\mathcal M}}
\def\SS{{\mathcal S}}
\def\UU{{\mathcal U}}
\def\VV{{\mathcal V}}
\def\WW{{\mathcal W}}
\def\eps{{\varepsilon}}
\newcommand{\Black}{\color{black}}
\newcommand{\ps}[2]{\langle#1,#2\rangle}
\newcommand{\Ps}[2]{\left\langle#1,#2\right\rangle}
\newcommand{\norm}[1]{\lVert#1\rVert}
\newcommand{\Norm}[1]{\left\lVert#1\right\rVert}
\newcommand{\abs}[1]{\lvert#1\rvert}
\newcommand{\Abs}[1]{\left\lvert#1\right\rvert}
\newcommand{\Nt}{|\hskip-0.04cm|\hskip-0.04cm|}
\newcommand{\w}{\omega}
\newcommand{\wto}{\rightharpoonup}
\newcommand{\Psym}{\PPP_{\!\mathrm{sym}}}
\newcommand{\beqn}{\begin{equation}}
\newcommand{\eeqn}{\end{equation}}
\newcommand{\beqns}{\begin{equation*}}
\newcommand{\eeqns}{\end{equation*}}
\newcommand{\bear}{\begin{eqnarray}}
\newcommand{\eear}{\end{eqnarray}}
\newcommand{\bean}{\begin{eqnarray*}}
\newcommand{\eean}{\end{eqnarray*}}
\newcommand{\bal}{\begin{aligned}}
\newcommand{\eal}{\end{aligned}}
\newtheorem{thm}{Theorem}[section]
\newtheorem{lemma}[thm]{Lemma}
\theoremstyle{definition}
\newtheorem{definition}[thm]{Definition}
\theoremstyle{remark}
\newtheorem{ex}[thm]{Example}
\newtheorem{rem}[thm]{Remark}
\newtheoremstyle{a}
  {10pt}
  {10pt}
  {\normalfont}
  {}
  {\bfseries}
  {.}
  { }
  {\thmname{#1} (A\thmnumber{#2}) (\normalfont\thmnote{#3})\bfseries}
\theoremstyle{a}  
\newtheorem{assump}{Assumption} 
\numberwithin{equation}{section}
\numberwithin{figure}{section}
\author{Kleber Carrapatoso}
\title[Propagation of chaos for the Landau equation]{Propagation of chaos for the spatially homogeneous Landau equation for Maxwellian molecules}
\address{CEREMADE (UMR 7534), Universit\'e Paris Dauphine, Place du Mar\'echal De Lattre De Tassigny, 75775 Paris, France.}
\curraddr{CMLA (UMR 8536), \'Ecole Normale Sup\'erieure de Cachan, 61 av.\ du Pr\'esident Wilson, 94235 Cachan cedex, France.}
\email{carrapatoso@cmla.ens-cachan.fr}
\subjclass[2000]{82C40, 76P05, 54C70}
\keywords{Chaos; entropic chaos; propagation of chaos; Landau equation, grazing collisions; Maxwellian molecules; trend to equilibrium}
\begin{document}

\begin{abstract}
We prove a quantitative propagation of chaos and entropic chaos, uniformly in time, for the spatially homogeneous Landau equation in the case of Maxwellian molecules. We improve the results of Fontbona, Gu\'erin and M\'el\'eard~\cite{FonGueMe} and Fournier~\cite{Fournier} where the propagation of chaos is proved for finite time. Moreover, we prove a quantitative estimate on the rate of convergence to equilibrium uniformly in the number of particles.
\end{abstract}

\maketitle

\tableofcontents

\section{Introduction}\label{sec:intro}

An important open problem in kinetic theory is to derive the Boltzmann equation from a many-particle system undergoing Newton's laws of dynamics. The correct scaling limit for this is the so-called Boltzmann-Grad or low density limit, see Grad \cite{Grad}. The best result is due to Lanford \cite{Lanford} who proved the limit for short times (see also Illner and Pulvirenti 
\cite{IllnerPulvirenti} and Gallagher, Saint-Raymond and Texier \cite{GST}). 

Kac \cite{Kac1956} proposed to derive the spatially homogeneous Boltzmann equation from a many-particle Markov process, performing a \emph{mean-field limit}. 
The program set by Kac in \cite{Kac1956} was then to investigate the behaviour of solutions of the mean-field equation in terms of the behaviour of the solutions of the \emph{master equation}, i.e.\ the equation for the law of the many-particle process. 
We refer to Mischler and Mouhot \cite{MMchaos} for a detailed introduction on Kac's program and for recent important results.


\medskip

In the same way, we would like to derive rigorously another equation from kinetic theory, the Landau equation, from a many-particle system described by Newton's laws. It is an open problem, but the correct scaling to this is also known, the weak-coupling limit, and we refer to Bobylev, Pulvirenti and Saffirio \cite{BPS} and the references therein for more information on this topic and partial results.  We do not pursue this problem here.

Instead, in this work, we shall use the approach described above introduced by Kac \cite{Kac1956}. Hence, we shall introduce a $N$-particle Markov process (see Section \ref{ssec:master}) from which we derive the spatially homogeneous Landau equation in the mean-field limit. 
The $N$-particle process used here is obtained by means of the grazing collisions limit applied to the $N$-particle master equation for the Boltzmann model. We should mention that the $N$-particle master equation introduced here was, in fact, originally proposed by Balescu and Prigogine in the 1950's (see \cite{KL} and references therein); and it is also studied by Kiessling and Lancelloti \cite{KL} and Miot, Pulvirenti and Saffirio \cite{MPS} (both in the Coulomb case).

\medskip

Let us briefly explain how we can prove the \emph{mean-field limit} with the approach proposed by Kac. Consider the probability density $F^N_t$ associated to the Landau $N$-particle system and its evolution equation, i.e.\ the master equation (Section~\ref{ssec:master}). Integrating this equation over all variables but the first, we obtain an evolution equation for the first marginal $\Pi_1(F^N_t)$ that depends on the second marginal $\Pi_2(F^N_t)$. If the second marginal of the probability density was the $2$-fold tensor product of a one-particle probability $f_t$, then $f_t$ would satisfy the Landau equation (Section~\ref{ssec:limit}). However, even if at initial time we start with an $N$-fold tensor probability 
$F^N(0)=f(0)^{\otimes N}$, this property can not be satisfied at later time because there are interactions between the particles. Kac suggested then that the chaos property (see definition below \eqref{def:chaos}), which is weaker than the tensor product property, could be propagated in time, which in turns would prove the mean-field limit.

%
%

\subsection{Known results}
Before giving our main results let us present known results concerning the propagation of chaos for the Landau equation for Maxwellian molecules.

The work of Fontbona, Gu\'erin and M\'el\'eard \cite{FonGueMe} consider a nonlinear diffusion processes driven by a white noise that have an interpretation in terms of PDEs corresponding to the Landau equation. 
They construct an $N$-particle system that converges, in the limit $N\to\infty$ and in finite time, to the nonlinear process and, moreover, obtain quantitative convergence rates in Wasserstein distance $W_2$. Then Fournier \cite{Fournier}, with the same probabilistic interpretation, improves the rate of convergence of \cite{FonGueMe}.

We should mention that the Landau master equation introduced in this work (Section \ref{ssec:master}) differs from the master equation associated to the $N$-particle process in \cite{FonGueMe,Fournier}, see Remark~\ref{rem:master}, indeed the master equation studied in this paper is conservative whereas that in \cite{FonGueMe,Fournier} is not.

\subsection{Main results}\label{ssec:results}

Consider a Polish space $E$, we shall denote by $\PPP(E)$ the space of probability measures on $E$. We denote by $\Psym(E^N)$ the space of symmetric probabilities on $E^N$. We say that a symmetric probability $F^N \in \Psym(E^N)$ is $f$-chaotic (or Kac chaotic), for some probability $f\in\PPP(E)$, if for all $\ell\in\N^*$ we have
\beqn\label{def:chaos}
F^N_\ell \wto f^{\otimes\ell}\quad\text{when}\quad N\to\infty,
\eeqn
where $F^N_\ell = \Pi_\ell(F^N)$ is the $\ell$-th marginal of $F^N$ and the convergence has to be understood in weak sense on $\PPP(E^\ell)$, i.e. the convergence of integral against continuous and bounded functions $\varphi \in C_b(E^\ell)$. In this paper we are interested in quantitative rates of convergence, more precisely we shall investigate estimates of the type, for any $\varphi\in\FF^{\otimes\ell}$ with $\FF\subset C_b(E)$ and $\|\varphi\|_{\FF^{\otimes\ell}}\le 1$,
$$
\left|  \left\langle   F^N_\ell - f^{\otimes\ell},\varphi \right\rangle\right| 
\le C(\ell) \,\eps(N),
$$
with a constant $C(\ell)$ possibly depending on $\ell$ and a function $\eps(N)\to 0$ when $N\to\infty$. Another possibility is to replace the left-hand side of the last 
equation by some distance on the space of probabilities, as for example the Wasserstein distance, $W_1(F^N_\ell, f^{\otimes\ell})$.

\medskip

The many-particle process can be considered in $\R^{dN}$ and then its law $F^N$ is a symmetric probability measure on $\R^{dN}$, however, thanks to the conservation laws, the process can be restricted to some sub-manifold of $\R^{dN}$. In our case, the dynamics of the many-particle process conserves momentum and energy (see Section~\ref{sec:landau} for details), which implies that the process can be restricted to 
\beqn\label{eq:sphere}
\SS^N(\EE,\MM) := \left\{ V=(v_1,\dots,v_N)\in \R^{dN}\;;\; \frac1N\, \sum_{i=1}^N |v_i-\MM|^2 = \EE,\;  \frac1N\, \sum_{i=1}^N v_i = \MM \right\}
\eeqn
where $\EE\ge 0$ and $\MM \in \R^d$. We consider through the paper, without loss of generality, the case $\MM=0$, we denote $\SS^N(\EE) := \SS^N(\EE,0)$ and call these sub-manifolds Boltzmann's spheres.

\subsubsection*{Initial data}
Considering the process in $\SS^N(\EE)$, we shall need an initial data $F^N_0 \in \Psym(\SS^N(\EE))$ that is $f_0$-chaotic for some $f_0\in \PPP(\R^d)$. This problem was studied in \cite{KC}, where it is proven that for some (regular enough) probability measure $f \in \PPP(\R^d)$, with zero momentum $\MM=\int v f=0$ and energy $\EE = \int |v|^2 f$, we can construct a probability measure $F^N \in \Psym(\SS^N(\EE))$ that is $f$-chaotic (and also entropically $f$-chaotic, see Section~\ref{sec:ent} for the definition), by taking the tensor product of $f$ and then restricting it to the Boltzmann's sphere $\SS^N(\EE)$. We shall denote this probability measure by
\beqn\label{eq:FN_SB}
F^N = \left[  f^{\otimes N} \right]_{\SS^N(\EE)} := \frac{f^{\otimes N}}{\int_{\SS^N(\EE)} f^{\otimes N} \, d\gamma^N} \, \gamma^N ,
\eeqn
where $\gamma^N$ is the uniform probability measure on $\SS^N(\EE)$.

\medskip

We can now state our main results in a simplified version.

\begin{thm}\label{thm:intro}
Consider $f_0\in  \PPP(\R^d)$, with zero momentum and energy $\EE$, and also $F^N_0 = [f_0^{\otimes N}]_{\SS^N(\EE)} \in \Psym(\SS^N(\EE))$. 
Let $f_t$ be the solution of the Landau equation for Maxwellian molecules (see \eqref{eq:landau}) with initial data $f_0$ and $F^N_t$ the solution of the Landau master equation for Maxwellian molecules (see \eqref{Lmaster2}) with initial data $F^N_0$.

\begin{enumerate}[(1)]

\item (Theorems~\ref{thm:chaos} and \ref{thm:Wchaos}) Then, for all $N\in\N^*$ we have
\begin{equation*}
\sup_{t\ge 0} \frac{W_1\left(F^N_t, f_t^{\otimes N} \right)}{N} \leq \eps_1(N),
\end{equation*}
where $\eps_1$ is a polynomial function and $\eps_1(N)\to 0$ as $N\to\infty$. 

Moreover, for all $t\ge 0$, for all $N\in \N^*$, we have
$$
\frac{W_1(F^N_t,\gamma^N)}{N} \le p(t),
$$
for a polynomial rate $p(t)\to 0$ as $t\to\infty$ and where $\gamma^N$ is the uniform probability measure on $\SS^N(\EE)$.

%
%

\medskip

\item (Theorem~\ref{thm:PropEntChaos}) Then, for all $N\in \N^*$ we have
$$
\sup_{t\ge0} \left| \frac1N\, H(F^N_t | \gamma^N) - H(f_t|\gamma)\right|
\le \eps_2(N),
$$
where $\eps_2$ is a polynomial function $\eps_2(N)\to 0$ as $N\to\infty$, $H(f|\gamma)$ denotes the relative entropy of $f_t$ with respect to $\gamma$, the centred Gaussian probability measure in $\R^d$ with energy $\EE$, and $H(F^N_t | \gamma^N)$ denotes the relative entropy of $F^N_t$ with respect to $\gamma^N$ (see Section~\ref{sec:ent}).

Moreover, for all $t\ge 0$, for all $N\in \N^*$, we have
$$
\frac1N \, H(F^N_t | \gamma^N) \le p(t),
$$
for some polynomial function $p(t)\to 0$ as $t\to\infty$.
\end{enumerate}

\end{thm}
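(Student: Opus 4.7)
The overall strategy is the uniform-in-time chaos scheme of Mischler--Mouhot \cite{MMchaos}: combine a finite-time quantitative propagation of chaos with exponentially fast return to equilibrium both for the Landau limit equation and, uniformly in $N$, for the many-particle system; then glue the two regimes by a time cutoff $T(N)$ growing logarithmically in $N$.

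First, I would establish the finite-time bounds
\begin{equation*}
\frac{W_1(F^N_t,f_t^{\otimes N})}{N}\le C e^{\Lambda t}\eps_0(N), \qquad \Bigl|\tfrac1N H(F^N_t|\gamma^N)-H(f_t|\gamma)\Bigr|\le C e^{\Lambda t}\eps_0(N),
\end{equation*}
via the abstract consistency--stability framework of \cite{MMchaos}. This requires: (i) the initial chaos and entropic chaos of $F^N_0=[f_0^{\otimes N}]_{\SS^N(\EE)}$, already proved in \cite{KC}; (ii) a consistency estimate comparing, on symmetric polynomial observables, the master generator with the nonlinear Landau generator, obtained from the grazing-collisions derivation in Section~\ref{ssec:master}; (iii) a stability estimate for the nonlinear Landau semigroup in a suitable weighted norm, using the smoothing of the Maxwellian Landau kernel.

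Second, and this is the main new input, I would prove a uniform-in-$N$ exponential decay
\begin{equation*}
\frac{1}{N}H(F^N_t|\gamma^N)\le e^{-\lambda t}\,\frac{1}{N}H(F^N_0|\gamma^N), \qquad \frac{W_1(F^N_t,\gamma^N)}{N}\le C e^{-\lambda t/2},
\end{equation*}
with $\lambda>0$ independent of $N$. The $W_1$ bound would follow from the entropy decay through a Talagrand transport inequality on $\SS^N(\EE)$, whose constant is uniform in $N$ thanks to the positive Ricci curvature of the sphere. The entropy decay itself reduces to an entropy--entropy production inequality for the Landau master operator on $\SS^N(\EE)$, in the spirit of the Carlen--Carvalho--Loss sharpening of Janvresse's resolution of Kac's spectral gap conjecture, here to be obtained via the grazing-collisions limit applied to the corresponding inequality for the Boltzmann--Kac master process.

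Finally, the exponential trend to equilibrium $H(f_t|\gamma)\le e^{-\mu t}H(f_0|\gamma)$ for the Landau equation is classical (Desvillettes--Villani in the Maxwellian case). The uniform-in-time bounds are then obtained by a triangle inequality through the anchors $\gamma^N$ on the many-particle side and $\gamma$ on the limit side, selecting $T(N)\sim\log(1/\eps_0(N))$ so that $C e^{\Lambda T(N)}\eps_0(N)$ matches $e^{-\min(\lambda,\mu) T(N)}$: on $[0,T(N)]$ the first-step bound is used, and on $[T(N),\infty)$ the return-to-equilibrium bounds are used, yielding a polynomial rate in $N$ uniform in $t$. I expect the principal obstacle to be the uniform-in-$N$ entropy--entropy production inequality of the second step, since the Landau master operator is second-order rather than of jump type as in Kac's model; a secondary difficulty is the sharp control of the chaos defect at initial time at the level of relative entropy, which is what makes the asymptotic analysis of \cite{KC} essential.
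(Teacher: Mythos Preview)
Your scheme differs from the paper's in an essential way, and the step you yourself flag as the principal obstacle is in fact a genuine gap.

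\medskip
\textbf{On the uniform-in-time chaos.} You propose finite-time bounds with exponential growth $Ce^{\Lambda t}\eps_0(N)$, to be glued at a logarithmic cutoff $T(N)$ with a uniform-in-$N$ relaxation estimate. The paper does not do this. Its consistency--stability theorem (Theorem~\ref{thm:abstract}) produces a bound that is \emph{directly} uniform in time, because the error term $T_2$ involves the time integral
\[
\int_0^\infty\Bigl(\bigl[S^\infty_s\bigr]_{C^{2,0}_{\Lambda_2}}+\bigl[S^\infty_s\bigr]^2_{C^{1,0}_{\Lambda_2}}\Bigr)\,ds,
\]
which is finite thanks to the exponential decay $e^{-(1-\eta)\lambda_1 t}$ in the differential stability assumption {\bf (A4)} (Lemmas~\ref{lem:A4a}--\ref{lem:A4c}). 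No gluing is needed for the propagation of chaos itself.

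\medskip
\textbf{On the gap.} Your second step asserts a uniform-in-$N$ entropy--entropy production inequality
\[
\frac{1}{N}H(F^N_t|\gamma^N)\le e^{-\lambda t}\,\frac{1}{N}H(F^N_0|\gamma^N),\qquad \lambda>0\text{ independent of }N,
\]
for the Landau master dynamics on $\SS^N(\EE)$. This is essentially Cercignani's conjecture at the $N$-particle level, and the paper does \emph{not} establish or use anything of this strength. For the $p(t)$ estimate on $W_1(F^N_t,\gamma^N)/N$, the paper instead combines two bounds: the uniform-in-time chaos gives $C(N^{-\epsilon}+e^{-\lambda t})$, while the $L^2$ spectral gap of Kiessling--Lancellotti \cite{KL} gives $A^N e^{-\lambda_1 t}$ with a constant $A^N$ that \emph{grows} in $N$. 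One then chooses which bound to use according to whether $N$ exceeds a threshold $N(t)\sim t$, yielding only a polynomial rate $p(t)$. For the entropy $p(t)$ estimate the paper uses the HWI inequality on the sphere together with the propagation of the relative Fisher information (Lemma~\ref{lem:fisher}) and the $W_1$ result just described --- again without any uniform entropy production inequality. Your proposed route through a uniform-in-$N$ entropy decay, obtained by passing to the grazing limit in a Carlen--Carvalho--Loss type inequality, is not justified here and would constitute a substantial result in its own right.
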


\subsection{Strategy}

Since the work of Kac \cite{Kac1956}, propagation of chaos has been investigated by many authors and for different models, and we refer the reader to \cite{MMchaos} and the references therein for a discussion of several results and different methods related to this problem. 

This work is based on an abstract method recent developed by Mischler and Mouhot~\cite{MMchaos} to prove quantitative and uniform in time propagation of chaos for Boltzmann models, and also by Mischler, Mouhot and Wennberg~\cite{MMWchaos} to prove quantitative propagation of chaos for drift, diffusion and jump processes. We shall first generalise the method of \cite{MMchaos}, that we call ``consistency-stability method'', to the case of the Landau equation. This method reduces the propagation of chaos to some consistency and stability estimates for the semigroups and generators associated to the $N$-particle system and to the limit mean-field equation (see Section~\ref{sec:abs-thm} for the details). Then we shall investigate these consistency and stability estimates for the Landau equation in order to be able to apply the ``consistency-stability method'' built before and obtain a quantitative and uniform in time propagation of chaos. Finally, we investigate the propagation of entropic chaos, as a consequence of the previous estimate together with some results on chaotic probability measures from Hauray and Mischler~\cite{HaurayMischler} and from the author~\cite{KC}.

\subsection{Organisation of the paper} 
Section \ref{sec:landau} is devoted to deduce a $N$-particle stochastic process modelling the Landau dynamics and to present the limit Landau equation.
In Section \ref{sec:abs-thm} we adapt the consistency-stability method of \cite{MMchaos} (see also \cite{MMWchaos}), and we state an abstract theorem (Theorem~\ref{thm:abstract}). In Section~\ref{sec:app} we apply the method presented before to the Landau model in order to prove the propagation of chaos with quantitative rate and uniformly in time (Theorems~\ref{thm:chaos} and \ref{thm:Wchaos}). Finally, in Section~\ref{sec:ent} we prove a quantitative propagation of entropic chaos uniformly in time (Theorem~\ref{thm:PropEntChaos}).

\bigskip\noindent
{\bf Acknowledgements.} The author would like to thank St\'ephane Mischler and Cl\'ement Mouhot for suggesting the subject, their encouragement and fruitful discussions.


\section{The Landau model}\label{sec:landau}
Our aim in this section is to present the $N$-particle system and the limit mean-field equation corresponding to the Landau model. The limit Landau equation is well known and we shall present it in the Section~\ref{ssec:limit}. Furthermore, in Section~\ref{ssec:master} we deduce a master equation for the $N$-particle system corresponding to the Landau dynamics.

First of all, we present the Boltzmann model, with its master equation and limit equation, which will be very useful in the sequel since Boltzmann and Landau equations are linked by the asymptotic of grazing collisions that we shall explain in details later.

\subsection{The Boltzmann model}\label{ssec:boltzmann}
We present here the Boltzmann model, with the limit mean field equation and the master equation. The spatially homogeneous Boltzmann equation \cite{Villani-BoltzmannBook,MMchaos} is given by, for $f=f(t,v)$,
\beqn\label{eq:Boltzmann}
\partial_t f =Q(f,f),
\eeqn
with the collision operator defined by
\beqn\label{eq:Boltzmann2}
Q(g,f)= \int_{\R^d \times \Sp^{d-1}} B(|v-v_*|,\cos\theta) 
\Big( g(v_*')f(v') - g(v_*)f(v) \Big) \, dv_*\,d\sigma,
\eeqn
and where the post-collisional velocities $v'$ and $v_*'$ are given by
\begin{equation}\label{eq:velocities}
v' = \frac{v+v_*}{2} + \frac{|v-v_*|}{2}\sigma, \qquad
v_*' = \frac{v+v_*}{2} - \frac{|v-v_*|}{2}\sigma
\end{equation} 
and $\cos\theta= \sigma \cdot (v-v_*)/|v-v_*|$. 

We assume that the collision kernel $B$ satisfies $B(|z|,\cos\theta) = \Gamma(|z|) b(\cos\theta)$ (for more information on the collision kernel we refer to \cite{Villani-BoltzmannBook}) for some nonnegative functions $\Gamma$ and $b$. When the interaction potential is proportional to $r^{-s}$, where $r$ denotes the distance between particles, then we have
$$
\Gamma(|z|)=|z|^\gamma, \qquad \sin^{d-2}\theta\,b(\cos\theta) \sim C_b\, \theta^{-1-\nu}\;\text{when } \theta\sim 0,
$$
with $\gamma = (s-2d+2)/s$, for some constant $C_b>0$ and some fixed $\nu \in (0,2)$. For example, in the 3-dimensional case we have $\nu=2/s$.

In this work we are concerned with the case of \emph{true Maxwellian molecules} $\gamma=0$ (which corresponds to $s=2d-2$), we shall then consider through the paper the following assumption :
\begin{equation}\label{eq:Bmaxwell}
\bal
&B(|z|,\cos\theta)=b(\cos\theta) , \\
&\int_{\Sp^{d-1}} b(\cos\theta)(1-\cos\theta)^{\alpha+1/4}\, d\sigma < + \infty, \quad \forall\,\alpha>0.
\eal
\end{equation}
We remark that in this case we have $\int_{\Sp^{d-1}} b(\cos\theta)\, d\sigma = \infty$ but 
$\int_{\Sp^{d-1}} b(\cos\theta)(1-\cos\theta)\, d\sigma <\infty$.

Another possible way to describe the pre and post-collisional velocities is the $\omega$-representation (see \cite{Villani-BoltzmannBook})
\begin{equation}\label{w-rep}
v' = v- (v-v_*,\omega) \omega , \quad v_*' = v_* + (v-v_*,\omega)\omega, \quad \omega\in \Sp^{d-1},
\end{equation}
which gives us
$$
Q_B(f,f) = \int_{\R^d \times \Sp^{d-1}} \widetilde B(|v-v_*|,\omega) 
\Big( f(v_*')f(v') - f(v_*)f(v) \Big) \, dv_*\,d\omega,
$$
where $\widetilde B(z,\omega) = |2 ( z/|z|, \omega )|^{d-2}\, B(z,\sigma) = |z|^\gamma b_{\omega}(\alpha)$ and $\alpha$ is the angle formed by $z$ and $\omega$, and the following relation holds
\beqn\label{eq:sigmaw}
\sigma = \frac{v-v_*}{|v-v_* |} - 2 \left(\omega,\frac{v-v_*}{|v-v_* |} \right) \omega.
\eeqn

\bigskip

Let us now present the many-particle model \cite{McKean,Kac1956,MMchaos,CCLLV,MMWchaos}.
Given a pre-collisional system of velocities $V=(v_1,\dots,v_N)\in \R^{dN}$ and a collision kernel $B(|z|,\cos\theta) = \Gamma(|z|) b(\cos\theta)$, the process is:
for any $i' \ne j'$, pick a random time $T(\Gamma(|v_{i'} - v_{j'}|))$ of collision accordingly to an exponential law of parameter $\Gamma(|v_{i'} - v_{j'}|)$ and choose the minimum time $T_1$ and the colliding pair $(v_i,v_j)$; draw $\sigma\in \Sp^{d-1}\subset \R^d$ according to the law $b(\cos\theta_{ij})$, with $\cos\theta_{ij} = \sigma \cdot (v_i-v_j)/|v_{i} - v_{j}|$; after collision the new velocities become $V_{ij}' = ( v_1,\dots, v_i',\dots,v_j',\dots,v_N)$ with
\beqn\label{eq:vitesses}
v_i' = \frac{v_i+v_j}{2} + \frac{|v_i-v_j|}{2}\sigma, \qquad
v_j' = \frac{v_i+v_j}{2} - \frac{|v_i-v_j|}{2}\sigma.
\eeqn

Iterating this construction we built then the associated Markov process $(\VV_t)_{t\ge 0}$ on $\R^{dN}$. As explained in the introduction, we can also consider this process on $\SS^N(\EE)$.
Then, after a rescaling of time and denoting $F^N_t$ the law of $\VV_t$, the master equation is given in dual form by \cite{MMchaos,MMWchaos},
\beqn\label{eq:MasterBoltzmann}
\bal
\partial_t \ps{F^N_t}{\varphi} &= \ps{F^N_t}{G^N_B \varphi} 
\eal
\eeqn
where, for any $V \in \R^{dN}$,
\beqn\label{eq:BoltzmannGenerator}
\bal
G^N_B \varphi (V) &= \frac{1}{2N}\sum_{i,j=1}^N \Gamma(|v_i-v_j|) \int_{\Sp^{d-1}} b(\cos\theta_{ij}) \left( \varphi_{ij}' - \varphi\right)\, d\sigma
\eal
\eeqn
with the shorthand notation $\varphi_{ij}' = \varphi(V_{ij}')$ and $\varphi=\varphi(V) \in C_b(\R^{dN})$. We shall consider the case of true Maxwellian molecules, i.e.\ $\Gamma(|z|)=1$ and $b(\cos\theta)$ satisfying \eqref{eq:Bmaxwell}.

\subsection{Limit equation}\label{ssec:limit}
We present here the limit spatially homogeneous Landau equation for Maxwellian molecules and some of its basic properties, for more information we refer to \cite{Villani-BoltzmannBook,Vi1,Vi2}.

The Landau equation is a kinetic model in plasma physics that describes the evolution of the density $f$ of a gas in the phase space of all positions and velocities of particles. Assuming that the density function does not depend on the position, we obtain the \emph{spatially homogeneous Landau equation} in the form
\begin{equation}\label{eq:landau}
\partial_t f  = Q_L(f,f),
\end{equation}
where $f=f(t,v)$ is the density of particles with velocity $v \in \R^d$ at 
time $t \in \R^+$. The Landau operator is given by
\begin{equation}\label{eq:landau2}
Q_L(g,f) = \partial_{\alpha} \left\{ \int_{\R^d} a_{\alpha \beta}(v-v_*)\left( g(v_*)\partial_{\beta} f(v) - \partial_{\beta} g(v_*) f(v) \right) \, dv_* \right\},
\end{equation}
where here and below we shall use the convention of implicit summation over indices.
Moreover, the matrix $a$ is nonnegative, symmetric and depends on the interaction between particles. If two particles interact with a potential proportional to $1/r^s$, where $r$ denotes their distance, then we have
$$
a_{\alpha \beta} (z)= \Lambda |z|^{\gamma+2} \Pi_{\alpha \beta}(z),\qquad \Pi_{\alpha \beta}(z) = \delta_{\alpha\beta} - \frac{z_\alpha z_\beta}{|z|^2},
$$
with $\gamma=(s-2d+2)/s$ and some constant $\Lambda \in (0,\infty)$.
As for the Boltzmann equation, we only consider the case of Maxwellian molecules $\gamma=0$, i.e.
\beqn\label{fonction-a}
a_{\alpha\beta} (z)= \Lambda |z|^{2} \Pi_{\alpha\beta}(z).
\eeqn

We also define
\begin{equation}\label{fonctions-bc}
\begin{aligned}
&b_\alpha(z)= \partial_\beta  a_{\alpha\beta} (z) = -\Lambda (d-1) z_\alpha,
 \qquad
c(z) = \partial_{\alpha\beta}  a_{\alpha\beta} (z) = -3\Lambda(d-1),
\end{aligned}
\end{equation}
and we denote
\begin{equation*}\label{fonctions-abc_convolution}
\begin{aligned}
\bar a_{\alpha\beta}=a_{\alpha\beta} * f,\;
\bar b_\alpha=b_\alpha * f,\;
\bar c=c * f.
\end{aligned}
\end{equation*}
Hence, we can write the Landau equation in another form
\begin{equation}\label{landau2}
\begin{aligned}
\partial_t f = \bar a_{\alpha\beta} \partial_{\alpha\beta} f -
               \bar c f.
\end{aligned}
\end{equation}
Moreover, let $\varphi = \varphi(v)$ be a test function, then we have the following weak forms
\begin{equation}\label{landau-faible0}
\begin{aligned}
\int Q_L(f,f)\varphi 
=&-\frac{1}{2}\int dv \,dv_* \, ff_* \, a(v-v_*)
\left( \frac{\nabla f}{f} -  \frac{\nabla_{\!*} f_*}{f_*} \right)
(\nabla \varphi - \nabla_{\!*} \varphi_*)
\end{aligned}
\end{equation}
or
\begin{equation}\label{landau-faible}
\begin{aligned}
\int Q_L(f,f)\varphi 
=&\frac{1}{2}\int dv \, dv_* \, ff_* \, a_{\alpha\beta}(v-v_*)(\partial_{\alpha\beta} \varphi +(\partial_{\alpha\beta} \varphi )_*) \\
&+ \int dv \, dv_* \, ff_* \, b_{\alpha}(v-v_*)(\partial_{\alpha} \varphi -(\partial_\alpha \varphi)_*),
\end{aligned}
\end{equation}
where hereafter we use the notation $f = f(v)$, $f_* = f(v_*)$, $\nabla f = \nabla f(v)$, $\nabla_* f_* = \nabla f(v_*)$ and for a matrix $a$ and vectors $z,w$ we denote $a z w = a_{\alpha \beta} z_\alpha  w_\beta$.

This equation satisfies the conservation of mass, momentum and energy. 
Moreover, the entropy $H(f)=\int f\log f$ is non-increasing, indeed taking $\varphi=\log f$ we obtain
\begin{equation}\label{A-landau}
\begin{aligned}
\frac{d}{dt} H(f) 
&=: - D(f) \\
&=
-\frac{1}{2}\int ff_* \,a(v-v_*)
\left( \frac{\nabla f}{f} -  \frac{\nabla_{\!*} f_*}{f_*} \right)
\cdot \left( \frac{\nabla f}{f} -  \frac{\nabla_{\!*} f_*}{f_*} \right) \,dv \,dv_*
\leq 0,
\end{aligned}
\end{equation}
since $a$ is nonnegative, which is the Landau version of the $H$-theorem of Boltzmann. For more information we refer to \cite{Vi1}.

The Landau equation was introduced by Landau in 1936. Later it was proven that the Landau equation can be obtained as a limit of the Boltzmann equation when grazing collisions prevail (see \cite{Des,ArsenevBuryak,DegondLucquin,Vi2} and the references therein for more details).

\subsection{Master equation}\label{ssec:master}
We derive a master equation for the Landau model. 
It is based on \cite{Vi2} where they use the grazing collisions limit to pass from Boltzmann to Landau limit equations (see also \cite{Des,ArsenevBuryak,DegondLucquin}). Since we know the master equation for the Boltzmann model \eqref{eq:MasterBoltzmann}, we shall perform the grazing collisions limit to obtain a master equation for the Landau model. As explained in the introduction, the master equation we derive here (see \eqref{Lmaster2}) is the same as introduced by Balescu and Prigorine in the 1950's, and it is also studied in the works \cite{KL,MPS}.

\subsubsection*{Grazing collisions}We present here the grazing collision limit as in \cite{Vi2}. Consider the true Maxwellian molecules collision kernel $b$ satisfying \eqref{eq:Bmaxwell}.
We say that $b_\eps$ concentrates on grazing collisions if: 
\begin{equation}\label{eq:grazing}
\left\{
\begin{array}{c}
\forall\,\theta_0>0, \qquad\sup_{\theta>\theta_0} b_{\eps}(\cos\theta) \to 0  \quad 
\mbox{when}\quad \varepsilon \to 0 \\\\
\Lambda_\eps = \displaystyle  \int_{\Sp^{d-1}} b_{\eps}(\cos\theta) (1 -\cos\theta) \, d\sigma
\to  \Lambda \in (0, \infty) \quad 
\mbox{when}\quad \varepsilon \to 0 .  
\end{array}
\right.
\end{equation}

\medskip

For the sake of simplicity, to derive the Landau master equation in this section, we suppose  the dimension $d=3$ to perform the computations following \cite{Vi2}, the other cases being the same.

From \eqref{eq:sigmaw}, using a spherical coordinate system (in dimension $d=3$) with axis $v-v_*$, we have
$$
\sigma = \frac{v-v_*}{|v-v_* |}\cos\theta + (\cos\phi \,\vec h + \sin\phi \,\vec i)\sin\theta.
$$ 
Moreover we have $|(v-v_*,\omega)| = |v-v_*| \sin(\theta/2)$. Finally we can write the operator in the following way (see \cite{Vi2})
\beqn\label{Qgrazing}
Q_B(f,f) = \int_0^{2\pi}d\phi \int_{0}^{\pi/2}d\theta\int_{\R^d} dv_* \, \zeta(\theta)(f' f_*' - ff_*),
\eeqn
with $\zeta(\theta) = \sin^{d-2}\theta \, b(\cos\theta)$.
In this case, we can rewrite \eqref{eq:grazing} and say that $\zeta_\varepsilon$ concentrates on grazing collisions if for all 
$\theta_0 \geq 0$ it hold
\begin{equation}\label{rasantes}
\left\{
\begin{array}{c}
 \sup_{\theta\ge \theta_0}\zeta_\varepsilon(\theta) \to 0 \quad 
\mbox{when}\quad \varepsilon \to 0  \\\\
\displaystyle
\Lambda_\eps := \frac{\pi}{2}\int_{0}^{\pi/2} 
\sin^2\frac{\theta}{2}\,\zeta_\varepsilon(\theta) \, d\theta\to \Lambda \in(0,\infty) \quad 
\mbox{when}\quad \varepsilon \to 0   .
\end{array}
\right.
\end{equation}

Let us consider then the Boltzmann master equation \eqref{eq:MasterBoltzmann}-\eqref{eq:BoltzmannGenerator}, using the form of \eqref{Qgrazing}, that is
$$
\begin{aligned}
(G_B^N\varphi)(V)  &= \frac{1}{2N}\sum_{i,j=1}^N \int_{0}^{2\pi}\!\!\!d\phi \int_{0}^{\pi/2}\!\!\!d\theta\, 
\zeta(\theta)(\varphi_{ij}' - \varphi).
\end{aligned}
$$
In this equation, we take a second order Taylor expansion of $\varphi_{ij}'$ and obtain
\begin{equation}\label{taylor0}
\begin{aligned}
\varphi(V_{ij}') - \varphi(V)=&D\varphi[V](V_{ij}' - V) 
          +  \frac{1}{2}(V_{ij}' - V)^T D^2\varphi[V](V_{ij}' - V) \\ 
          &+ O(|V_{ij}' - V|^3) .
\end{aligned}
\end{equation}
With the incoming and outgoing velocities $V$ and $V_{ij}'$ (see \eqref{eq:vitesses}), we have
$$
V_{ij}' - V = (0,\dots , 0,v_i'-v_i,0,\dots ,0, v_j'-v_j,0,
\dots ,0).
$$
In \eqref{taylor0}, $D\varphi[V]$ and $D^2\varphi[V]$ are given by 
$$
D\varphi[V] = \left( \nabla_{i}\varphi(V) \right)_{1\leq i \leq N} , \quad
D^2\varphi[V] = \left( \nabla^{2}_{\!ij}\varphi(V) \right)_{1\leq i,j \leq N}
$$
where $\nabla_{i}\varphi = \left( \partial_{i,\alpha} \varphi \right)_{1\leq\alpha\leq 3}$ and 
$\nabla^{2}_{ij}\varphi = \left( \partial_{v_{i,\alpha}}\partial_{v_{j,\beta}} \varphi \right)_{1\leq\alpha,\beta\leq 3}$.
Now we substitute $V_{ij}' - V$ in \eqref{taylor0} and we get
\begin{equation}
\begin{aligned}
\varphi(V_{ij}') - \varphi(V)
 =& \nabla_{i}\varphi(V)(v_i'-v_i)+\nabla_{j}\varphi(V)(v_j'-v_j) \\
     &+  \frac{1}{2}\left\{ \nabla^2_{ii}\varphi(V)(v_i'-v_i)^2 +\nabla^2_{jj}\varphi(V)(v_j'-v_j)^2\right. \\
&+\left.\nabla^{2}_{\!ij}\varphi(V)(v_i'-v_i)(v_j'-v_j) +\nabla^{2}_{\!ji}\varphi(V)(v_i'-v_i)(v_j'-v_j) \right\}\\
&+O(|V_{ij}' - V|^3).
\end{aligned}
\end{equation}
Finally, using \eqref{eq:vitesses} and \eqref{w-rep} with $v_i$ and $v_j$, one obtains
\begin{equation}\label{taylor}
\begin{aligned}
\varphi(V_{ij}') - \varphi(V)
 =& -(v_i-v_j,\omega)(\nabla_{i}\varphi-\nabla_{j}\varphi,\omega) &(=T_1)\\
     &+  \frac{1}{2}(v_i-v_j,\omega)^2
\left\{ \nabla^2_{ii}\varphi +\nabla^2_{jj}\varphi
-\nabla^{2}_{\!ij}\varphi -\nabla^{2}_{\!ji}\varphi\right\}(\omega,\omega) &(=T_2)\\
&+O\left(|v_i - v_j|^3\sin^3 \frac{\theta}{2}\right).
\end{aligned}
\end{equation}
For each pair of particles $i$ and $j$, in the orthonormal basis $\{\frac{v_i-v_j}{|v_i-v_j|},\vec h,\vec i \}$, one has
$$
\omega = \frac{v_i-v_j}{|v_i-v_j|} \sin \frac{\theta}{2} + 
(\cos\phi \,\vec{h} + \sin\phi \,\vec{i})\cos \frac{\theta}{2}
$$
and then, using the fact that linear combinations of $\cos\phi$ and $\sin\phi$ vanish when integrated over $\phi$, we can compute the contribution of $T_1$ integrated over $\phi$
$$
-\int^{2\pi}_{0}d\phi (v_i-v_j,\omega)(\nabla_{i}\varphi-\nabla_{j}\varphi,\omega)
= -2\pi \sin^2\frac{\theta}{2} \left(v_i-v_j,\nabla_{i}\varphi-\nabla_{j}\varphi \right).
$$
Now we have to compute the integral of $T_2$ over $\phi$, we denote 
$$
\lambda_{\alpha\beta} = \{ \partial_{v_{i,\alpha}}\partial_{v_{i,\beta}} \varphi  + 
\partial_{v_{j,\alpha}}\partial_{v_{j,\beta}} \varphi  - 
\partial_{v_{i,\alpha}}\partial_{v_{j,\beta}} \varphi  -
\partial_{v_{j,\alpha}}\partial_{v_{i,\beta}} \varphi  \} /2
$$
and in the same orthonormal basis, we compute
$$
A=|v_i-v_j|^2\sin^2\frac{\theta}{2} \int_0^{2\pi}d\phi \lambda_{\alpha\beta}\omega_{\alpha}\omega_{\beta}.
$$
Again, linear combinations of $\cos\phi$ and $\sin\phi$ vanish, which implies
\begin{equation}\label{taylor2}
\begin{aligned}
A &= |v_i-v_j|^2\sin^2\frac{\theta}{2}\int_0^{2\pi}d\phi \left( 
\lambda_{11}\sin^2\frac{\theta}{2} +
\lambda_{22}\cos^2\phi\cos^2\frac{\theta}{2} +
\lambda_{33}\sin^2\phi\cos^2\frac{\theta}{2}
\right) \\
&= 2\pi |v_i-v_j|^2 \left(  \lambda_{11}\sin^4\frac{\theta}{2} +
\frac{\lambda_{22}}{2}\cos^2\frac{\theta}{2}\sin^2\frac{\theta}{2} +
\frac{\lambda_{33}}{2}\cos^2\frac{\theta}{2}\sin^2\frac{\theta}{2}
\right)
\end{aligned}
\end{equation}
and we remark that the first coefficient is of order greater than $2$ in $\theta$.

We introduce $\Pi_{\alpha\beta}(v_i-v_j)$ the projection over the orthogonal space of $\frac{v_i-v_j}{|v_i-v_j|}$, and the dominant term of \eqref{taylor2} when $\theta\to 0$ is

$$
\pi |v_i-v_j|^2 \sin^2\frac{\theta}{2} \Pi_{\alpha\beta}(v_i-v_j) \lambda_{\alpha\beta},
$$
or in matricial notation
$$
\pi |v_i-v_j|^2 \sin^2\frac{\theta}{2} \Pi(v_i-v_j) : \frac{\left(
\nabla^2_{ii}\varphi +\nabla^2_{jj}\varphi -\nabla^2_{ij}\varphi -\nabla^2_{\!ji}\varphi
\right)}{2}.
$$
Finally, we obtain
\begin{equation}\label{taylor3}
\begin{aligned}
&\frac{1}{2}\int^{2\pi}_{0}d\phi(\varphi(V_{ij}') - \varphi(V)) =
-\frac{\pi}{2} \sin^2\frac{\theta}{2} \left(2(v_i-v_j),\nabla_{i}\varphi-\nabla_{j}\varphi \right)\\ 
&\qquad\qquad 
+\frac{\pi}{4} |v_i-v_j|^2 \sin^2\frac{\theta}{2} \Pi(v_i-v_j) : \left(
\nabla^2_{ii}\varphi +\nabla^2_{jj}\varphi -\nabla^2_{ij}\varphi -\nabla^2_{\!ji}\varphi \right) \\ 
&\qquad \qquad
+ O\left( |v_i-v_j|^2\theta^4 \wedge 1 \right) + 
O\left( |v_i-v_j|^3\theta^3 \wedge 1 \right).
\end{aligned}
\end{equation}

Consider now the Boltzmann master equation with kernel $\zeta_\eps$ satisfying the grazing collisions \eqref{rasantes} and plug \eqref{taylor3} in it, we obtain then
\begin{equation}
\begin{aligned}
G_B^N\varphi (V)
&= \frac{1}{N}\sum_{i,j=1}^N\int_{0}^{\pi/2}\!\!\!d\theta \,
\zeta_\varepsilon(\theta)\frac{1}{2}\int_{0}^{2\pi}\!\!\!d\phi(\varphi_{ij}' - \varphi) \\
&= 
\frac{1}{N}\sum_{i,j=1}^N\int_{0}^{\pi/2}\!\!\!d\theta \,
                \zeta_\varepsilon(\theta)
\left\{
-\frac{\pi}{2} \sin^2\frac{\theta}{2} \left(2(v_i-v_j),\nabla_{i}\varphi-\nabla_{j}\varphi \right)    \right. \\ 
&\quad 
+ \frac{\pi}{4} |v_i-v_j|^2 \sin^2\frac{\theta}{2} \Pi(v_i-v_j) : \left(
\nabla^2_{ii}\varphi +\nabla^2_{jj}\varphi -\nabla^2_{ij}\varphi -\nabla^2_{\!ji}\varphi \right) \\ 
&\quad 
+ O\left( |v_i-v_j|^2\theta^4 \wedge 1 \right) + 
O\left( |v_i-v_j|^3\theta^3 \wedge 1 \right)
\bigg\}.
\end{aligned}
\end{equation}
This can be written in the following way
\begin{equation}
\begin{aligned}
G_B^N\varphi (V) &= 
\frac{1}{N}\sum_{i,j=1}^N \frac{\pi}{2}\int_{0}^{\pi/2}\!\!\!d\theta \,
              \sin^2\frac{\theta}{2}  \,\zeta_\varepsilon(\theta)\,
  (-2|v_i-v_j|^{2}\frac{(v_i-v_j)}{|v_i-v_j|^2})\cdot(\nabla_{i}\varphi-\nabla_{j}\varphi )     \\ 
&\quad + \frac{1}{2N}\sum_{i,j=1}^N \frac{\pi}{2}\int_{0}^{\pi/2}\!\!\!d\theta \,
 \sin^2\frac{\theta}{2}    \, \zeta_\varepsilon(\theta)\,
  |v_i-v_j|^{2}\Pi(v_i-v_j) : \left(
\nabla^2_{ii}\varphi +\nabla^2_{jj}\varphi -\nabla^2_{ij}\varphi -\nabla^2_{\!ji}\varphi \right) \\ 
&\quad + \frac{1}{N}\sum_{i,j=1}^N\int_{0}^{\pi/2}\!\!\!d\theta \,
                \zeta_\varepsilon(\theta) \left(
O\left( |v_i-v_j|^2\theta^4 \wedge 1 \right) + 
O\left( |v_i-v_j|^3\theta^3 \wedge 1 \right)\right).
\end{aligned}
\end{equation}
As in \cite{Vi2}, the last term converges to $0$ when $\eps\to 0$. Then we have, 
using \eqref{rasantes} and the definition of the functions 
$a$ in \eqref{fonction-a}  
and 
$b$ in \eqref{fonctions-bc},  
when $\varepsilon\to 0$,
\begin{equation}\label{eq:generateurLandau}
\begin{aligned}
G_B^N\varphi (V) &\longrightarrow 
\frac{1}{N}\sum_{i,j=1}^N 
  -2\Lambda|v_i-v_j|^{2}\frac{(v_i-v_j)}{|v_i-v_j|^2} \cdot(\nabla_{i}\varphi-\nabla_{j}\varphi )     \\ 
&\quad 
+ \frac{1}{2N}\sum_{i,j=1}^N \Lambda
  |v_i-v_j|^{2}\Pi(v_i-v_j) : \left(
\nabla^2_{ii}\varphi +\nabla^2_{jj}\varphi -\nabla^2_{ij}\varphi -\nabla^2_{\!ji}\varphi \right) \\ 
&=\frac{1}{N}\sum_{i,j=1}^N 
 b(v_i-v_j) \cdot(\nabla_{i}\varphi-\nabla_{j}\varphi )     \\ 
&\quad
+ \frac{1}{2N}\sum_{i,j=1}^N a(v_i-v_j) : \left(
\nabla^2_{ii}\varphi +\nabla^2_{jj}\varphi -\nabla^2_{ij}\varphi -\nabla^2_{\!ji}\varphi \right) =: G^N_L \varphi
\end{aligned}
\end{equation}
and that defines the Landau generator $G_L^N$.
Finally, we derive the following Landau master equation
\begin{equation}\label{Lmaster2}
\begin{aligned}
\partial_t \ps{F^N_t}{\varphi} = \left\langle F^N_t, G^N_L \varphi \right\rangle &=  
\frac{1}{N}\int \sum_{i,j=1}^{N}b(v_i-v_j)\cdot(\nabla_i\varphi - \nabla_j \varphi) F^N_t (dV)\\
&\quad
+\frac{1}{2N}\int \sum_{i,j=1}^{N}
a(v_i-v_j):(\nabla^2_{ii}\varphi +\nabla^2_{jj}\varphi -\nabla^2_{ij}\varphi -\nabla^2_{\!ji}\varphi) F^N_t (dV).
\end{aligned}
\end{equation}

\begin{rem}\label{rem:master}
In the paper \cite{FonGueMe}, the Landau equation is studied with a
probabilistic approach. 
In particular they prove that the following process associated to a $N$-particle system, for $i=1,...,N$,
\begin{equation}\label{Nlandau-processus}
dX_{t}^{i}=\frac{\sqrt 2}{\sqrt{N}} 
            \sum_{k=1}^{N} \sigma(X_{t}^{i}-X_{t}^{k})\,dB_{t}^{i,k} 
+ \frac{2}{N} \sum_{k=1}^{N} b(X_{t}^{i}-X_{t}^{k})\,dt,
\end{equation}
where $B^{i,k}$ are $N^2$ independent $\R^d$-valued Brownian motions, converges to the process 
\begin{equation}\label{landau-processus}
X_{t}=X_{0}+ \sqrt2\int_{0}^{t} \int_{\R^d} \sigma(X_{s}-y) W(dy,ds)
+ 2\int_{0}^{t}\int_{\R^d}  b(X_{s}-y)P_s(dy)ds
\end{equation}
where $P_t$ is the law of $V_t$ and $W$ is a white noise in space-time. Moreover the process \eqref{landau-processus} is associated to the spatially homogeneous Landau equation with the coefficients 
$a_{\alpha\beta}(z)~:=~\left(\sigma\sigma^*\right)_{\alpha\beta}(z)$ and
$b_{\alpha}(z):= \partial_{\beta}a_{\alpha\beta}(z) $.
Then, the Kolmogorov equation of \eqref{Nlandau-processus} is, for a test function $\varphi : \R^{dN}\longrightarrow \R$ and where $F^N_t$ represents the law of $X_t$,
\begin{equation}\label{Lmaster1}
\begin{aligned}
\partial_t \ps{F^N_t}{\varphi}&= \ps{F^N_t}{G^N_2\varphi}\\
&= \frac{1}{N}\sum_{i,j=1}^{N}  \int_{\R^{dN}} 
b(v_{i}-v_{j})\cdot \left(\nabla_{i}\varphi(V) -\nabla_{j}\varphi(V)\right) F^N_t (dV)\\
&\quad
+ \frac{1}{2N}\sum_{i,j=1}^{N} \int_{\R^{dN}} 
 a(v_{i}-v_{j}) : \left(\nabla^2_{ii}\varphi(V)+\nabla^2_{jj}\varphi(V)\right) 
F^N_t (dV).
\end{aligned}
\end{equation}
In \cite{Fournier}, instead of the process \eqref{landau-processus} it was used a similar process with only $N$ independent Brownian motion, i.e.\ replacing $B^{i,k}$ by $B^i$ in \eqref{landau-processus}, and it gives the same master equation \eqref{Lmaster1}. We remark that this equation differs from \eqref{Lmaster2} by the terms 
$
\sum_{i,j} a(v_i-v_j) : \left( -\nabla^2_{ij}\varphi -\nabla^2_{\!ji}\varphi \right).
$
We will see later in Lemma~\ref{lem:A1i0} and Remark~\ref{rem:conservative} that \eqref{Lmaster2} is conservative whereas \eqref{Lmaster1} is not.

\end{rem}

\smallskip

Now, in order to obtain a $N$-particle SDE associated to \eqref{eq:generateurLandau}-\eqref{Lmaster2}, we shall modify \eqref{landau-processus}. Consider then, for $i=1,\dots,N$,
$\R^d$-valued random variables $(X^i_t)_{t\ge 0}$ satisfying the following equation
\beqn\label{eds0}
\forall\, i=1,\dots,N,
\quad
dX_{t}^{i}=\frac{\sqrt2}{\sqrt{N}} 
            \sum_{ \substack{k=1\\ k\neq i}}^{N} \sigma(X_{t}^{i}-X_{t}^{k})\,dZ_{t}^{i,k} 
+ \frac{2}{N}\sum_{\substack{k=1\\ k\neq i}}^{N} b(X_{t}^{i}-X_{t}^{k}) \, dt
\eeqn
where, for all $1\le i\le N$ and $i<k$, $Z^{i,k}_t=B^{i,k}_t$ are $N(N-1)/2$ independent $\R^d$-valued Brownian motions and the other terms are anti-symmetric $Z^{k,i}_t = - B^{i,k}_t$. As in \eqref{landau-processus}, we have $a(z)=\sigma(z)\sigma^*(z)$ and $\sigma$ is symmetric (recall that $a$ also is), i.e. $\sigma(-z)=\sigma(z)$. 
%
%
Consider a test function $\phi:\R^{dN}\to \R$ and let $F^N_t$ be the law of $X_t$, then from a straightforward computation we easily deduce that the Kolmogorov equation of \eqref{eds0} is given by \eqref{Lmaster2}.

%

\section{The consistency-stability method for the Landau equation}\label{sec:abs-thm}

In this section we adapt the ``consistency-stability method'' developed in \cite{MMchaos} (see also \cite{MMWchaos}) in order to be able to apply it later to the Landau equation in Section~\ref{sec:app}. We shall introduce our abstract framework in Section~\ref{ssec:framework}, then we state and prove the main result of this section (Theorem~\ref{thm:abstract}) in Section~\ref{ssec:abstheo}.

\smallskip

Before going into details let us briefly explain the method, and we refer the reader to \cite{MMchaos,MMWchaos} for more information. We want to compare a solution to the $N$-particle system $F^N_t$ on $\PPP(E^N)$ to a solution to the limit mean-field equation $f_t$ on $\PPP(E)$, and the difficulty here arises from the fact that these two solution does not belong to the same functional space. 
The first step is then to define a common functional framework to be able to compare the $N$-particle dynamics to the limit dynamics. We hence work at the level of the full limit space $\PPP(\PPP(E))$ and, at the dual level, $C_b(\PPP(E))$, through some projections: we project the $N$-particle dynamics thanks to the empirical measures, and we project the limit dynamics by pullback. 
More precisely, at the level of the $N$-particle dynamics, we introduce the semigroup $S^N_t$ acting on $\PPP(E^N)$ associated to the flow, as well as the semigroup $T^N_t$ acting on $C_b(E^N)$ in duality with it. Moreover, we introduce, at the level of the limit dynamics, the (nonlinear) semigroup $S^\infty_t$ acting on $\PPP(E)$ associated to the limit mean-field equation, and also the associate linear semigroup $T^\infty_t$ acting on $C_b(\PPP(E))$ by pullback. 
We prove then convergence and stability estimates between the linear semigroups $T^N_t$ and $T^\infty_t$ as $N \to \infty$. In order to do this, we identify the regularity required to prove a consistency estimate between the generators $G^N$ and $G^\infty$ of the semigroups $T^N_t$ and $T^\infty_t$, and we prove the corresponding stability estimate on $T^\infty_t$, based on stability estimates on the (nonlinear) limit semigroup $S^\infty_t$. We observe here that we need to introduce an abstract differential calculus (see Definitions~\ref{def:holder} and \ref{def:diff-calculus}) for functions on the space of probability measures.

It is worth mentioning that the novelty of the method here with respect to \cite{MMchaos} appears in the regularity (of order two, instead of order one) that we need in the consistency estimate (see Assumption {\bf (A3)}) and the corresponding stability estimate (see Assumption {\bf (A4)}). This comes from the fact that the Landau equation possesses a different diffusive structure than the Boltzmann equation, which is treated in \cite{MMchaos}.

\subsection{Abstract framework}\label{ssec:framework}

Consider a Polish space $E$ and we shall denote by $\PPP(E)$ the space of probability measures on $E$. Consider also $E^N$ and the space of symmetric probability measures $ \Psym(E^N)$, more precisely, we say that $F^N\in \PPP(E^N)$ is symmetric if for all $\varphi\in C_b(E^N)$ we have that
$$
\int_{E^N} \varphi \, dF^N = \int_{E^N} \varphi_\sigma \, dF^N,
$$
for any permutation $\sigma$ of $\{1,\dots,N\}$, and where 
$$
\varphi_\sigma := \varphi(V_\sigma)=\varphi (v_{\sigma(1)},\dots,v_{\sigma(N)}),
$$
for $V=(v_1,\dots,v_N) \in E^N$.

\medskip
\noindent
{\it $N$-particle system framework.} 
We consider the trajectories $\VV^N_t \in E^N$, $t \ge 0$, of particles, and we assume that this flow commutes with permutations (which means that particles are indistinguishable).
To this flow in $E^N$ corresponds a semigroup $S^N_t$ that acts on $\Psym (E^N)$ for the probability density of particles in the phase space $E^N$, which is defined by
\begin{equation}\label{eq:SNt}
\forall\, F^N_0 \in \Psym(E^N), \;
\varphi \in C_b(E^N), \quad
\left\langle S^N_t (F^N_0) , \varphi \right\rangle = {\mathbb E} ( \varphi (\VV^N_t)),
\end{equation}
where the bracket denotes the duality between $\PPP(E^N)$ and $C_b(E^N)$, and $\mathbb E$ is the expectation associated to the space of probabilities in which the process $\VV^N_t$ is built, in other words $F^N_t  := S^N_t(F^N_0)$ is the law of $\VV^N_t$. 
Since the process $(\VV^N_t)_{t}$ commutes with permutations, the semigroup $S^N_t$ acts on $\Psym(E^N)$, which means that if the law $F^N_0$ of $\VV^N_0$ lies in $\Psym(E^N)$ then the 
law $F^N_t$ of $\VV^N_t$ also lies in $\Psym(E^N)$ for later times. We associate to the $c_0$-semigroup $S^N_t$ on $\Psym(E^N)$ a linear evolution equation with generator $A^N$
$$
\partial_t F^N = A^N F^N , \quad F^N \in \Psym(E^N),
$$
which is the forward Kolmogorov or Master equation on the law of $(\VV^N_t)$.

We also consider the semigroup $T^N_t$ acting on the function space $C_b(E^N)$ of observables of the evolution system $(\VV^N_t)_t$ on $E^N$, which is in duality with $S^N_t$ by
\begin{equation}\label{eq:TNt}
\forall\, F^N \in \Psym(E^N), \;
\varphi \in C_b(E^N), \quad
\left\langle F^N , T^N_t(\varphi) \right\rangle = \left\langle S^N_t (F^N) , \varphi \right\rangle .
\end{equation}
To the $c_0$-semigroup $T^N_t$ on $C_b(E^N)$ we can associate a linear evolution equation with generator $G^N$ by
$$
\partial_t \varphi = G^N \varphi, \quad \varphi \in C_b(E^N),
$$
which is the backward Kolmogorov equation.

\medskip
\noindent
{\it Limit mean-field equation framework.} 
We consider a (nonlinear) semigroup $S^\infty_t$ acting on $\PPP(E)$ associated to an evolution equation and some operator $Q$. More precisely, for any $f_0 \in \PPP(E)$ (and here we may assume additional bounds), we have $S^\infty_t (f_0) := f_t$ where $(f_t)_{t\ge 0} \in C([0,\infty); \PPP(E))$ is the solution of
\begin{equation}\label{eq:Q(ft)}
\partial_t f_t = Q(f_t),
\end{equation}
with initial data $f_{| t=0} = f_0$.

We consider the associated pullback semigroup $T^\infty_t$ acting on $C_b(\PPP(E))$ by
\begin{equation}\label{eq:Tinftyt}
\forall\, f \in \PPP(E), \;
\Phi \in C_b(\PPP(E)), \quad
T^\infty_t[\Phi] (f) := \Phi (S^\infty_t (f)).
\end{equation}
Remark that $T^\infty_t$ is linear as a function of $\Phi$, but in general $T^\infty[\Phi](f)$ is not linear as a function of $f$.

We associate to the semigroup $T^\infty_t$ the following linear evolution equation with some generator $G^\infty$,
$$
\partial_t \Phi = G^\infty \Phi, \quad \Phi \in C_b(\PPP(E)).
$$
We refer the reader to \cite[Remark 2.1]{MMchaos} for a heuristic explanation of the physical interpretation of the semigroup $T^\infty_t$: the semigroup of observables of the nonlinear equation~\eqref{eq:Q(ft)}.

\bigskip

%

As explained above, we define some applications relating this objects in order to be able to compare the two dynamics.
We define the application $\pi^N_E : E^N / \mathfrak S_N \to  \PPP(E)$, where $\mathfrak S_N$ denotes the group of permutations of $\{1,\dots,N \}$, by
\begin{equation}\label{}
\pi^N_E (V):= \mu^N_V = \frac1N\sum_{i=1}^{N} \delta_{v_i}, 
\end{equation}
and $\mu^N_V$ is called the empirical measure associated to $V$.
We introduce moreover the map $\pi^N_C : C_b( \PPP(E))  \to C_b(E^N)$ given by
\begin{equation}\label{}
\begin{aligned}
&\forall\, V \in E^N,\;\forall\, \Phi \in C_b( \PPP(E)), \qquad 
 \pi^N_C[\Phi](V):= (\Phi \circ \pi^N_E) (V) = \Phi(\mu^N_V).
\end{aligned}
\end{equation}
Then we define the application $\pi^N_P :  \Psym(E^N) \to  \PPP( \PPP(E))$ by
\begin{equation}\label{}
\begin{aligned}
&\forall\, F^N \in  \Psym(E^N),\;\forall\, \Phi \in C_b( \PPP(E)), \\
&\Ps{\pi^N_P (F^N)}{\Phi} :=\Ps{F^N}{\pi^N_C (\Phi)}= \int_{E^N} \Phi(\mu^N_V) F^N(dV),
\end{aligned}
\end{equation}
where the first bracket is the duality $ \PPP( \PPP(E)) \leftrightarrow C_b( \PPP(E))$ and the second one is the duality $ \Psym(E^N) \leftrightarrow C_b(E^N)$.
Finally, the map $R^N : C_b(E^N)  \to C_b( \PPP(E))$ is defined by
\begin{equation}\label{eq:R^N_phi}
\begin{aligned}
&\forall\, \varphi \in C_b(E^N),\;\forall\, f \in  \PPP(E), \\
&R^N[\varphi] (f) := \Ps{\varphi}{f^{\otimes N}} = \int_{E^N} \varphi(V) f(dv_1)\cdots f(dv_N),
\end{aligned}
\end{equation}
and in the sequel we denote $R^\ell_\varphi := R^\ell[\varphi]$ for $\varphi \in C_b(E^\ell)$.
The functions $R^\ell_\varphi$ are the ``polynomials'' on the space $ \PPP(E)$, we will see later (Example~\ref{ex:R^l_phi}) that they are continuous and differentiable in the sense of Definitions \ref{def:holder} and \ref{def:diff-calculus}, where we develop a differential calculus on $ \PPP(E)$.

\medskip

For a given weight function $m : E\to\R_+$ we define the $N$-particle weight function $M^N_m$ by
\begin{equation}\label{eq=weight-function}
\forall\, V = (v_1,...,v_N) \in E^N,\qquad M^N_m (V):= \frac{1}{N}\sum_{i=1}^{N} m(v_i) = \ps{\mu^N_V}{m} = M_m(\mu^N_V).
\end{equation}

\begin{definition}\label{def:P_G}
For a given weight function $m_\GG : E\to\R_+$ we define the subspaces of probability measures
\begin{equation*}\label{}
  \PPP_{\GG} := \left\{ f \in  \PPP(E) ;\; \ps{f}{m_{\GG}} <\infty \right\}
\end{equation*}
and the corresponding bounded sets, for $a\in(0,\infty)$,
\begin{equation*}\label{}
  \BB\PPP_{\GG,a} := \left\{ f \in  \PPP_{\GG} ;\; \ps{f}{m_{\GG}} \leq a \right\}.
\end{equation*}
For a given constraint function ${\bf m}_\GG : E\to \R^D$ such that $\langle f, {\bf m}_\GG \rangle$ is well defined for any $f\in \PPP_{\GG}$ and a given space of constraints $\RRR_\GG \subset \R^D$, we define, for any ${\bf r} \in \RRR_\GG$, the constrained subsets
\begin{equation*}\label{}
  \PPP_{\GG,{\bf r}} := \{ f \in   \PPP_{\GG} ;\; \ps{f}{{\bf m}_\GG}={\bf r} \},
\end{equation*}
the corresponding bounded constrained subsets
\begin{equation*}\label{}
  \BB\PPP_{\GG,a,{\bf r}} := \{ f \in   \BB\PPP_{\GG,a} \;;\; \ps{f}{{\bf m}_\GG}={\bf r} \},
\end{equation*}
and the corresponding space of increments
\begin{equation*}\label{}
\II \PPP_{\GG} := \{ g-f  ;\; \exists\, {\bf r}\in \RRR_\GG \text{ s.t. } g,f \in   \PPP_{\GG,{\bf r}} \}.
\end{equation*}

\end{definition}

We shall consider a distance $\dist_\GG$ which is either defined on the whole space $\PPP_{\GG}$ or such that there is a Banach space $\GG \supset \II \PPP_{\GG}$ endowed with a norm $\| \cdot \|_\GG$ such that $\dist_\GG$ is defined on $\PPP_{\GG,{\bf r}}$ for any ${\bf r} \in \RRR_\GG$ by 
$$
\dist_\GG (g,f) = \| g-f \|_\GG, \quad \forall f,g\in \PPP_{\GG,{\bf r}} . 
$$

\begin{definition}\label{def:space-duality}
We say that two spaces $\FF$ and $\PPP_\GG$, where $\FF \subset C_b(E)$ is a normed vector space endowed with the norm $\|\cdot\|_\FF$ and $\PPP_\GG \subset \PPP(E)$ is a subspace of probability measures endowed with a metric $\dist_\GG$, satisfy a duality inequality if
\begin{equation}\label{}
\forall\, f,g \in \PPP_\GG\,,\; \forall\, \varphi \in \FF \qquad 
|\langle f-g, \varphi \rangle | \le   \dist_{\GG}(g,f)\, \| \varphi \|_{\FF},
\end{equation}
where here $\langle \cdot, \cdot \rangle$ corresponds to the usual duality between $\PPP(E)$ and $C_b(E)$. In the case in which $\dist_\GG$ is associated to a normed vector space $\GG$, this amounts to the usual duality inequality $| \langle h, \varphi \rangle | \le \| h \|_\GG \, \| \varphi \|_\FF $.

\end{definition}


\begin{definition}\label{def:holder}
Consider two metric spaces $\widetilde\GG_1$ and $\widetilde\GG_2$, some weight function $\Lambda : \widetilde\GG_1\to [1,\infty)$ and $\eta\in (0,1]$. We denote by $C^{0,\eta}_{\Lambda} (\widetilde\GG_1;\widetilde\GG_2)$ the (weighted) space of functions with $\eta$-H\"older regularity, that is functions $\SS : \widetilde\GG_1 \to \widetilde\GG_2$ such that there exists a constant $C>0$ so that
\begin{equation}\label{}
\forall\, f,g \in \widetilde\GG_1\,, \quad 
\dist_{\widetilde\GG_2} \left( \SS(f),\SS(g)   \right)\leq C\,\Lambda(g,f)\, \dist_{\widetilde\GG_1} (f,g)^{\eta}.
\end{equation}
where $\Lambda(g,f) := \max\{ \Lambda(g),\Lambda(f)  \}$.
\end{definition}

We define then a higher order differential calculus.

\begin{definition}\label{def:diff-calculus}
Consider two normed spaces $\GG_1$ and $\GG_2$, two metric spaces 
$\widetilde\GG_1$ and $\widetilde\GG_2$ such that $\widetilde\GG_i - \widetilde\GG_i \subset\GG_i$, some weight function $\Lambda : \widetilde\GG_1\to [1,\infty)$ and $\eta\in (0,1]$. We denote by $C^{2,\eta}_{\Lambda} (\widetilde\GG_1;\widetilde\GG_2)$ the (weighted) space of functions two times continuously differentiable from $\widetilde\GG_1$ to $\widetilde\GG_2$, and such that the second derivative satisfies some weighted $\eta$-H\"older regularity (in the sense of Definition \ref{def:holder}).

More precisely, these are functions $\SS : \widetilde\GG_1 \to \widetilde\GG_2$ continuous, such that there exists maps (for $j=1,2$)
$D^j\SS : \widetilde\GG_1 \to \LL^j(\GG_1,\GG_2)$ , where $\LL^j(\GG_1,\GG_2)$ is the space of $j$-multilinear applications from $\GG_1$ to $\GG_2$, and there exist some constants $C_{j}>0$, so that we have for all $f,g \in \widetilde\GG_1$,
\begin{equation}\label{eq:diff-calculus}
\begin{aligned}
\Norm{ \SS(g) - \SS(f)}_{\GG_2} 
&\leq C_{0}\,\Lambda(g,f)\, {\Norm{g-f}}_{\GG_1}^{\eta_0}, \\
\Norm{\Ps{D\SS[f]}{g-f}}_{\GG_2} 
&\leq C_{1}\,\Lambda(g,f)\, {\Norm{g-f}}_{\GG_1}^{\eta_0} ,\\
\Norm{ \SS(g) - \SS(f) - \Ps{D\SS[f]}{g-f}}_{\GG_2} 
&\leq C_{2}\,\Lambda(g,f)\, {\Norm{g-f}}_{\GG_1}^{1+\eta_1}, \\
\Norm{  \Ps{D^2\SS[f]}{(g-f)^{\otimes 2}}}_{\GG_2} 
&\leq C_{3}\,\Lambda(g,f)\, {\Norm{g-f}}_{\GG_1}^{1+\eta_1}, \\
\Norm{ \SS(g) - \SS(f) -\sum_{i=1}^{2} \Ps{D^i\SS[f]}{(g-f)^{\otimes i}}}_{\GG_2} 
&\leq C_{4}\,\Lambda(g,f)\, {\Norm{g-f}}_{\GG_1}^{2+\eta},
\end{aligned}
\end{equation}
where $\eta_0,\eta_1 \in [\eta,1]$.

We define then the semi-norms on $C^{2,\eta}_{\Lambda} (\widetilde\GG_1;\widetilde\GG_2)$
$$
\left[  \SS \right]_{C^{1,0}_{\Lambda}} := \sup_{f\in \widetilde \GG_1, h\in \GG_1} 
\frac{\Norm{\Ps{D\SS[f]}{h}}_{\GG_2}}{\Lambda(f)\, {\Norm{h}}_{\GG_1}^{\eta_0}},\qquad
\left[  \SS \right]_{C^{2,0}_{\Lambda}} := \sup_{f\in \widetilde \GG_1, h\in \GG_1} 
\frac{\Norm{\Ps{D^2\SS[f]}{(h,h)}}_{\GG_2}}{\Lambda(f)\, {\Norm{h}}_{\GG_1}^{1+\eta_1}}
$$
and
$$
\begin{aligned} 
\left[  \SS \right]_{C^{0,\eta_0}_{\Lambda}} &:= \sup_{f,g \in \widetilde \GG_1}
\frac{\Norm{ \SS(g) - \SS(f)}_{\GG_2}}{\Lambda(g,f)\, {\Norm{g-f}}_{\GG_1}^{\eta_0}},\\
\left[  \SS \right]_{C^{1,\eta_1}_{\Lambda}} &:= \sup_{f,g \in \widetilde \GG_1}
\frac{\Norm{ \SS(g) - \SS(f) - \Ps{D\SS[f]}{g-f}}_{\GG_2}}{\Lambda(g,f)\, {\Norm{g-f}}_{\GG_1}^{1+\eta_1}},\\
\left[  \SS \right]_{C^{2,\eta}_{\Lambda}} &:= \sup_{f,g \in \widetilde \GG_1}
\frac{\Norm{ \SS(g) - \SS(f) -\sum_{i=1}^{2} \Ps{D^i\SS[f]}{(g-f)^{\otimes i}}}_{\GG_2}}{\Lambda(g,f)\, {\Norm{g-f}}_{\GG_1}^{2+\eta}}.
\end{aligned}
$$
Finally we combine these semi-norms into 
$$
\norm{\SS}_{C^{2,\eta}_{\Lambda}} := \left[  \SS \right]_{C^{0,\eta_0}_{\Lambda}} 
+ \left[  \SS \right]_{C^{1,\eta_1}_{\Lambda}} + \left[  \SS \right]_{C^{2,\eta}_{\Lambda}}
+ \left[  \SS \right]_{C^{1,0}_{\Lambda}}
+ \left[  \SS \right]_{C^{2,0}_{\Lambda}}.
$$

\end{definition}


This differential calculus holds for composition, more precisely for $\UU \in C^{k,\eta}_{\Lambda_\UU}(\widetilde\GG_1;\widetilde\GG_2)$ and $\VV \in C^{k,\eta}_{\Lambda_\VV}(\widetilde\GG_2;\widetilde\GG_3)$ we have $\SS = \VV\circ\UU \in C^{k,\eta_\SS}_{\Lambda_\SS}(\widetilde\GG_1;\widetilde\GG_3)$ for some appropriate weight function $\Lambda_\SS$ and exponent $\eta_\SS$. We now state the following lemma

\begin{lemma}\label{lem:chain-rule}
Let $\GG_i$ be normed spaces and $\widetilde\GG_i$ be metric spaces for $i=1,2,3$, such that $\widetilde\GG_i - \widetilde\GG_i \subset\GG_i$.
Consider $\UU \in C^{2,\eta}_{\Lambda} \cap C^{1,(1+2\eta)/3}_{\Lambda} \cap C^{0,(2+\eta)/3}_{\Lambda} (\widetilde\GG_1;\widetilde\GG_2)$, with $\eta\in(0,1]$, and $\VV \in C^{2,1}(\widetilde\GG_2;\widetilde\GG_3)$. Then the composition function $\SS = \VV \circ \UU$ satisfies 
$$
\SS \in C^{2,\eta}_{\Lambda^{3}} \cap C^{1,(1+2\eta)/3}_{\Lambda^{3}} \cap C^{0,(2+\eta)/3}_{\Lambda^{3}}(\widetilde\GG_1;\widetilde\GG_3)
$$ 
and we have 
\begin{equation*}\label{}
\begin{aligned}
&D\SS[f] = D\VV[\UU(f)] \circ D\UU[f], \\
&D^2\SS[f] = D^2\VV[\UU(f)] \circ \left( D\UU[f]\otimes D\UU[f]\right) + D\VV[\UU(f)]\circ D^2\UU[f].
\end{aligned}
\end{equation*}
More precisely, the following estimates hold
\begin{equation*}\label{}
\begin{aligned}
\left[  \SS \right]_{C^{0,(2+\eta)/3}_{\Lambda}} &\leq 
\left[  \VV \right]_{C^{0,1}}\left[  \UU \right]_{C^{0,(2+\eta)/3}_{\Lambda}} , \\
\left[  \SS \right]_{C^{1,0}_{\Lambda}} &\leq 
\left[  \VV \right]_{C^{1,0}}\left[  \UU \right]_{C^{1,0}_{\Lambda}} , \\ 
\left[  \SS \right]_{C^{1,(1+2\eta)/3}_{\Lambda^{2}}} &\leq 
\left[  \VV \right]_{C^{1,0}}\, \left[ \UU \right]_{C^{1,(1+2\eta)/3}_{\Lambda}} +
\left[  \VV \right]_{C^{1,1}} \, \left[ \UU \right]_{C^{0,(2+\eta)/3}_{\Lambda}}^2 ,\\
\left[  \SS \right]_{C^{2,0}_{\Lambda^{2}}} &\leq 
\left[  \VV \right]_{C^{1,0}}\, \left[ \UU \right]_{C^{2,0}_{\Lambda}} +
\left[  \VV \right]_{C^{2,0}} \, \left[ \UU \right]_{C^{1,0}_{\Lambda}}^2 ,\\
\left[ \SS \right]_{C^{2,\eta}_{\Lambda^{3}}} 
&
\leq   \left[  \VV \right]_{C^{1,0}}\, 
 \left[ \UU \right]_{C^{2,\eta}_{\Lambda}} 
+ \left[  \VV \right]_{C^{2,0}}\,
 \left[  \UU \right]_{C^{1,(1+2\eta)/3}_{\Lambda}}^2 \\
&\quad
+2\left[  \VV \right]_{C^{2,0}}\,
 \left[  \UU \right]_{C^{1,0}_{\Lambda}}  \, \left[  \UU \right]_{C^{1,(1+2\eta)/3}_{\Lambda}} 
+\left[  \VV \right]_{C^{2,1}}  \left[  \UU \right]_{C^{0,(2+\eta)/3}_{\Lambda}}^3.
\end{aligned}
\end{equation*}

\end{lemma}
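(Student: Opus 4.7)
The plan is to derive the derivative formulae for $\SS = \VV \circ \UU$ via the chain and product rules, and then verify each of the five semi-norm estimates by Taylor-expanding $\VV$ at $\UU(f)$ and $\UU$ at $f$ to appropriate orders, carefully tracking the resulting powers of $\|g-f\|_{\GG_1}$. The identities $D\SS[f] = D\VV[\UU(f)]\circ D\UU[f]$ and $D^2\SS[f] = D^2\VV[\UU(f)]\circ (D\UU[f])^{\otimes 2} + D\VV[\UU(f)] \circ D^2\UU[f]$ follow by matching the linear and bilinear parts of the expansion of $\SS(g) - \SS(f) = \VV(\UU(g)) - \VV(\UU(f))$, using that $\VV \in C^{2,1}$.

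The estimates for $[\SS]_{C^{0,(2+\eta)/3}_\Lambda}$, $[\SS]_{C^{1,0}_\Lambda}$ and $[\SS]_{C^{2,0}_{\Lambda^2}}$ are direct applications of the chain rule. For example, for $[\SS]_{C^{2,0}_{\Lambda^2}}$ one splits $\langle D^2\SS[f],(h,h)\rangle$ into the contribution $\langle D^2\VV[\UU(f)], (D\UU[f]h)^{\otimes 2}\rangle$, bounded by $[\VV]_{C^{2,0}}[\UU]^2_{C^{1,0}_\Lambda}\Lambda^2(f)\|h\|^{2(2+\eta)/3}$, and $\langle D\VV[\UU(f)], \langle D^2\UU[f],(h,h)\rangle\rangle$, bounded by $[\VV]_{C^{1,0}}[\UU]_{C^{2,0}_\Lambda}\Lambda(f)\|h\|^{1+(1+2\eta)/3}$. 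Both share the exponent $(4+2\eta)/3 = 1+(1+2\eta)/3$ expected for $\SS \in C^{1,(1+2\eta)/3}_{\Lambda^2}$.

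For the first-order H\"older bound $[\SS]_{C^{1,(1+2\eta)/3}_{\Lambda^2}}$, I would decompose
\[
\SS(g) - \SS(f) - \langle D\SS[f], g-f\rangle = \langle D\VV[\UU(f)], R_\UU\rangle + W_\VV,
\]
where $R_\UU := \UU(g) - \UU(f) - \langle D\UU[f], g-f\rangle$ is bounded by $[\UU]_{C^{1,(1+2\eta)/3}_\Lambda}\Lambda\|g-f\|^{(4+2\eta)/3}$, and $W_\VV := \VV(\UU(g)) - \VV(\UU(f)) - \langle D\VV[\UU(f)], \UU(g)-\UU(f)\rangle$ is bounded by $[\VV]_{C^{1,1}}\|\UU(g)-\UU(f)\|^2 \leq [\VV]_{C^{1,1}}[\UU]^2_{C^{0,(2+\eta)/3}_\Lambda}\Lambda^2\|g-f\|^{2(2+\eta)/3}$, and the two exponents $(4+2\eta)/3$ and $2(2+\eta)/3$ agree. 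For $[\SS]_{C^{2,\eta}_{\Lambda^3}}$, the analogous decomposition combines the second-order $C^{2,1}$ expansion of $\VV$ at $\UU(f)$ with the $C^{2,\eta}_\Lambda$ expansion of $\UU$ at $f$. Writing $A := \langle D\UU[f], g-f\rangle$, $R := \UU(g)-\UU(f) - A$, and $R' := R - \langle D^2\UU[f], (g-f)^{\otimes 2}\rangle$, and using the chain-rule formulae to identify $\langle D\SS[f], g-f\rangle + \langle D^2\SS[f], (g-f)^{\otimes 2}\rangle = \langle D\VV, A\rangle + \langle D^2\VV, A^{\otimes 2}\rangle + \langle D\VV, \langle D^2\UU, (g-f)^{\otimes 2}\rangle\rangle$, the full remainder reduces to
\[
\langle D\VV[\UU(f)], R'\rangle + 2\langle D^2\VV[\UU(f)], A\otimes R\rangle + \langle D^2\VV[\UU(f)], R^{\otimes 2}\rangle + E_\VV,
\]
with $E_\VV$ the cubic $C^{2,1}$-remainder of $\VV$. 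Bounding each term by the corresponding seminorm yields the first coefficient from $[\VV]_{C^{1,0}}\|R'\|$, the fourth from $[\VV]_{C^{2,1}}\|\UU(g)-\UU(f)\|^3$, and the remaining two from $[\VV]_{C^{2,0}}\|A\|\|R\|$ and $[\VV]_{C^{2,0}}\|R\|^2$.

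The main technical obstacle is the exponent-matching that produces exactly the four coefficients claimed. The specific H\"older exponents $(2+\eta)/3, (1+2\eta)/3, \eta$ for $\UU$ are precisely those that make the identities $3\cdot(2+\eta)/3 = 2+\eta$ (needed for the cubic remainder $E_\VV$) and $1 + (1+2\eta)/3 = 2(2+\eta)/3 = (4+2\eta)/3$ (the first-order H\"older balance, and also the $A\otimes R$ cross term after $(2+\eta)/3+(4+2\eta)/3 = 2+\eta$) hold simultaneously, ensuring that three of the four terms in the $C^{2,\eta}_{\Lambda^3}$ decomposition have exactly the target exponent $2+\eta$. The $R^{\otimes 2}$ term produces the strictly larger exponent $(8+4\eta)/3 \ge 2+\eta$, whose excess power in $\|g-f\|$ must be absorbed into the weight $\Lambda(g,f)$ (or, equivalently, controlled by the effective diameter of the bounded subsets of $\widetilde\GG_1$ on which the method is subsequently applied in Section~\ref{sec:app}); the three $\Lambda$ factors collected from the various bounds then combine into the weight $\Lambda^3$ appearing in the output seminorm.
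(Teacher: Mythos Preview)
Your proposal is correct and follows essentially the same approach as the paper: the paper introduces the same first- and second-order remainders $R^1_\UU$, $R^2_\UU$, $R^1_\VV$, $R^2_\VV$ (your $R$, $R'$, $W_\VV$, $E_\VV$), expands $\VV(\UU(g))-\VV(\UU(f))$ to second order in $\UU(g)-\UU(f)$, substitutes the expansion of $\UU$, and reads off precisely the four terms $\langle D\VV, R^2_\UU\rangle$, $\langle D^2\VV, (R^1_\UU)^{\otimes 2}\rangle$, $2\langle D^2\VV, \langle D\UU,g-f\rangle\otimes R^1_\UU\rangle$, $R^2_\VV$ that you list. Your observation that the $R^{\otimes 2}$ contribution carries the strictly larger exponent $2+2\eta_1=(8+4\eta)/3>2+\eta$ is accurate; the paper records this as the strict inequality $2+\eta<2+2\eta_1$ and passes to the final bound without further comment, so your remark about absorbing the excess via boundedness of $\widetilde\GG_1$ is a fair clarification of what is implicit there.
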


\begin{proof}[Proof of Lemma \ref{lem:chain-rule}]
Let $f,g \in \widetilde\GG_1$ and $\bar f, \bar g \in \widetilde\GG_2$.

By Definition \ref{def:diff-calculus} with $\UU \in C^{2,\eta}_{\Lambda} \cap C^{1,(1+2\eta)/3}_{\Lambda} \cap C^{0,(2+\eta)/3}_{\Lambda}(\widetilde\GG_1;\widetilde\GG_2)$ and $\VV \in C^{2,1}(\widetilde\GG_2;\widetilde\GG_3)$, we have
\begin{equation}\label{eq:U}
\begin{aligned}
\UU(g)-\UU(f) &= \Ps{D\UU[f]}{g-f} + R^1_\UU(g,f) ,\\
\UU(g)-\UU(f) &= \Ps{D\UU[f]}{g-f} + \Ps{D^2\UU[f]}{(g-f)^{\otimes 2}}  +R^2_\UU(g,f),
\end{aligned}
\end{equation}
with 
\begin{eqnarray}
{\norm{ \UU(g) - \UU(f)}}_{\GG_2} &\leq& \left[  \UU \right]_{C^{0,\eta_0}_{\Lambda}} \, \Lambda(g,f) \, {\norm{g-f}}_{\GG_1}^{\eta_0}, \label{eq:a3}\\
{\norm{ \Ps{D\UU[f]}{g-f}}}_{\GG_2} &\leq& \left[  \UU \right]_{C^{1,0}_{\Lambda}} \, \Lambda(g,f) \, {\norm{g-f}}_{\GG_1}^{\eta_0}, \label{eq:a1}\\
{\norm{R^1_\UU(g,f)}}_{\GG_2} &\leq& \left[  \UU \right]_{C^{1,\eta_1}_{\Lambda}} \, \Lambda(g,f)
 \, {\norm{g-f}}_{\GG_1}^{1+\eta_1} , \label{eq:a4}\\
 {\norm{ \Ps{D^2\UU[f]}{(g-f)^{\otimes 2}}}}_{\GG_2} &\leq& \left[  \UU \right]_{C^{2,0}_{\Lambda}} \, \Lambda(g,f) \, {\norm{g-f}}_{\GG_1}^{1+\eta_1}, \label{eq:a2}\\
{\norm{R^2_\UU(g,f)}}_{\GG_2} &\leq& \left[  \UU \right]_{C^{2,\eta}_{\Lambda}} \, \Lambda(g,f)
 \, {\norm{g-f}}_{\GG_1}^{2+\eta}  \label{eq:a5},
\end{eqnarray}
where, for simplicity, we denote $\eta_0 = (2+\eta)/3$ and $\eta_1 = (1+2\eta)/3$.

Similarly we have for $\VV$,
\begin{equation}\label{eq:V}
\begin{aligned}
\VV(\bar g)-\VV(\bar f) &= \Ps{D\VV[\bar f]}{\bar g-\bar f} + R^1_\VV(\bar g,\bar f), \\
\VV(\bar g)-\VV(\bar f) &= \Ps{D\VV[\bar f]}{\bar g-\bar f} + \Ps{D^2\VV[\bar f]}{(\bar g-\bar f)^{\otimes 2}}  +R^2_\VV(\bar g,\bar f),
\end{aligned}
\end{equation}
with 
\begin{eqnarray}
{\norm{ \VV(\bar g) - \VV(\bar f)}}_{\GG_3} &\leq& \left[  \VV \right]_{C^{0,1}} \,  {\norm{\bar g-\bar f}}_{\GG_2}, \label{eq:b3}\\
{\norm{ \Ps{D\VV[\bar f]}{\bar g-\bar f}}}_{\GG_3} &\leq& \left[  \VV \right]_{C^{1,0}} \, {\norm{\bar g-\bar f}}_{\GG_2} , \label{eq:b1}\\
{\norm{R^1_\VV(\bar g,\bar f)}}_{\GG_3} &\leq& \left[  \VV \right]_{C^{1,1}} 
 \, {\norm{\bar g-\bar f}}_{\GG_2}^{2} , \label{eq:b4}\\
 {\norm{ \Ps{D^2\VV[\bar f]}{(\bar g-\bar f)^{\otimes 2}}}}_{\GG_3} &\leq& \left[  \VV \right]_{C^{2,0}}  \, {\norm{\bar g-\bar f}}_{\GG_2}^{2} , \label{eq:b2}\\
{\norm{R^2_\VV(\bar g,\bar f)}}_{\GG_3} &\leq& \left[  \VV \right]_{C^{2,1}} 
 \, {\norm{\bar g-\bar f}}_{\GG_2}^{3}  \label{eq:b5}.
\end{eqnarray}

Using these estimates we can compute the same type of estimates for $\SS=\VV\circ\UU$. We obtain first, thanks to \eqref{eq:b3} and \eqref{eq:a3}, that
\begin{equation*}\label{}
\begin{aligned}
{\norm{\SS(g) - \SS(f)}}_{\GG_3} &= {\norm{\VV\left(\UU(g)\right) - \VV\left(\UU(f)\right)}}_{\GG_3} \\
&\leq \left[  \VV \right]_{C^{0,1}} {\norm{\UU(g)-\UU(f)}}_{\GG_2} \\
&\leq \left[  \VV \right]_{C^{0,1}}
\left[  \UU \right]_{C^{0,\eta_0}_{\Lambda}} \Lambda(g,f){\norm{g-f}}_{\GG_1}^{\eta_0}
\end{aligned}
\end{equation*}
which implies $\left[  \SS \right]_{C^{0,(2+\eta)/3}_{\Lambda}} \leq 
\left[  \VV \right]_{C^{0,1}}\left[  \UU \right]_{C^{0,(2+\eta)/3}_{\Lambda}}$.

We also have, using \eqref{eq:V} and \eqref{eq:U},
\begin{equation*}\label{}
\begin{aligned}
\SS(g) - \SS(f) &= \VV\left(\UU(g)\right) - \VV\left(\UU(f)\right) \\
&=  \Ps{D\VV[\UU(f)]}{\UU(g)-\UU(f)} + R^1_\VV(\UU(g),\UU(f))\\
&= \Ps{D\VV[\UU(f)]}{\left\{ \Ps{D\UU[f]}{g-f} + R^1_\UU(g,f) \right\}} \\
&\quad+ R^1_\VV(\UU(g),\UU(f)),
\end{aligned}
\end{equation*}
from which we deduce $\Ps{D\SS[f]}{g-f} = \Ps{D\VV[\UU(f)]}{\left( \Ps{D\UU[f]}{g-f} \right)}$ 
and, by \eqref{eq:b1} and \eqref{eq:a1},
\begin{equation*}
\begin{aligned}
\|\Ps{D\SS[f]}{g-f}\|_{\GG_3} &\le \left[  \VV \right]_{C^{1,0}}  \|\Ps{D\UU[f]}{g-f} \|_{\GG_2} \\
&\le \left[  \VV \right]_{C^{1,0}} \left[  \UU \right]_{C^{1,0}_\Lambda} \Lambda(f) \|g-f\|_{\GG_1}^{\eta_0},
\end{aligned}
\end{equation*}
which yields $\left[  \SS \right]_{C^{1,0}_\Lambda} \le \left[  \VV \right]_{C^{1,0}} \left[  \UU \right]_{C^{1,0}_\Lambda}$.

Therefore, we obtain using \eqref{eq:b1}, \eqref{eq:b4}, \eqref{eq:a4} and \eqref{eq:a3}, 
\begin{equation*}\label{}
\begin{aligned}
&\left\| \SS(g) - \SS(f) - \Ps{D\SS[f]}{g-f} \right\|_{\GG_3} \\ 
&\qquad 
\leq {\norm{\Ps{D\VV[\UU(f)]}{ R^1_\UU(g,f) }}}_{\GG_3} +{\norm{R^1_\VV(\UU(g),\UU(f))}}_{\GG_3}\\
&\qquad
\leq  \left[  \VV \right]_{C^{1,0}}  \, {\norm{R^1_\UU(g,f)}}_{\GG_2} +  
\left[  \VV \right]_{C^{1,1}} \, {\norm{\UU(g)-\UU(f)}}_{\GG_2}^{2} \\
&\qquad \leq
\left[  \VV \right]_{C^{1,0}}\, \left[ \UU \right]_{C^{1,\eta_1}_{\Lambda}} \, \Lambda(g,f) \,
{\norm{g-f}}_{\GG_1}^{1+\eta_1} 
+ \left[  \VV \right]_{C^{1,1}} \, \left[ \UU \right]_{C^{0,\eta_0}_{\Lambda}}^2 \, \Lambda(g,f)^2 \, {\norm{g-f}}_{\GG_1}^{2\eta_0}.
\end{aligned}
\end{equation*}
Since $1+\eta_1 = 2\eta_0 = 1 + (1+2\eta)/3$ and $\Lambda \ge 1$, the last inequality implies 
$$
\left[  \SS \right]_{C^{1,(1+2\eta)/3}_{\Lambda^{2}}} \leq 
\left[  \VV \right]_{C^{1,0}}\, \left[ \UU \right]_{C^{1,(1+2\eta)/3}_{\Lambda}} +
\left[  \VV \right]_{C^{1,1}} \, \left[ \UU \right]_{C^{0,(2+\eta)/3}_{\Lambda}}^2.
$$ 

Finally, from \eqref{eq:V} and \eqref{eq:U}, we have
\begin{equation*}\label{}
\begin{aligned}
&\SS(g) - \SS(f) = \VV\left(\UU(g)\right) - \VV\left(\UU(f)\right) \\
&\quad=  \Ps{D\VV[\UU(f)]}{\UU(g)-\UU(f)} + \Ps{D^2\VV[\UU(f)]}{\big(\UU(g)-\UU(f)\big)^{\otimes 2}} +R^2_\VV(\UU(g),\UU(f))\\
&\quad
=  \Ps{D\VV[\UU(f)]}{\Big( \Ps{D\UU[f]}{g-f} + \Ps{D^2\UU[f]}{(g-f)^{\otimes 2}}  +R^2_\UU(g,f)\Big)} \\
&\quad\quad
+ \Ps{D^2\VV[\UU(f)]}{\Big(\Ps{D\UU[f]}{g-f} + R^1_\UU(g,f)\Big)^{\otimes 2}}\\
&\quad \quad
+R^2_\VV(\UU(g),\UU(f)),
\end{aligned}
\end{equation*}
which yields 
$$
\begin{aligned}
\Ps{D^2\SS[f]}{(g-f)^{\otimes 2}} &= \Ps{D\VV[\UU(f)]}{\big( \Ps{D^2\UU[f]}{(g-f)^{\otimes 2}} \big)} \\
&+ \Ps{D^2\VV[\UU(f)]}{\big(\Ps{D\UU[f]}{g-f} \big)^{\otimes 2}}.
\end{aligned}
$$
Hence we obtain, with \eqref{eq:b1}, \eqref{eq:b2}, \eqref{eq:a2} and \eqref{eq:a1},
$$
\begin{aligned}
&\left\| \Ps{D^2\SS[f]}{(g-f)^{\otimes 2}} \right\|_{\GG_3} \\
&\qquad\le \left[  \VV \right]_{C^{1,0}} \left\| \Ps{D^2\UU[f]}{(g-f)^{\otimes 2}} \right\|_{\GG_2} 
+ \left[  \VV \right]_{C^{2,0}}   \left\|\Ps{D\UU[f]}{g-f} \right\|_{\GG_2}^2 \\
&\qquad\le \left[  \VV \right]_{C^{1,0}} \left[  \UU \right]_{C^{2,0}_\Lambda} \,\Lambda(f) \,\|g-f\|_{\GG_1}^{1+\eta_1}
+ \left[  \VV \right]_{C^{2,0}} \left( \left[  \UU \right]_{C^{1 ,0}_\Lambda} \, \Lambda(f)\, 
\|g-f\|_{\GG_1}^{\eta_0} \right)^2 \\
&\qquad\le\left( \left[  \VV \right]_{C^{1,0}} \left[  \UU \right]_{C^{2,0}_\Lambda} 
+ \left[  \VV \right]_{C^{2,0}} \left[  \UU \right]_{C^{1 ,0}_\Lambda}^2 \right) \Lambda(f)^2 \, \|g-f\|_{\GG_1}^{1+(1+2\eta)/3},
\end{aligned}
$$
which gives $\left[  \SS \right]_{C^{2,0}_{\Lambda^2}} \le  \left[  \VV \right]_{C^{1,0}} \left[  \UU \right]_{C^{2,0}_\Lambda} 
+ \left[  \VV \right]_{C^{2,0}} \left[  \UU \right]_{C^{1 ,0}_\Lambda}^2$.

Now, for the last estimate we obtain
\begin{equation*}\label{}
\begin{aligned}
&{\norm{\SS(g) - \SS(f) - \Ps{D\SS[f]}{g-f} - \Ps{D^2\SS[f]}{(g-f)^{\otimes 2}}}}_{\GG_3} \\ 
&\qquad 
\leq {\Norm{\Ps{D\VV[\UU(f)]}{ R^2_\UU(g,f) }}}_{\GG_3} 
+{\Norm{\Ps{D^2\VV[\UU(f)]}{ \left( R^1_\UU(g,f)\right)^{\otimes2} }}}_{\GG_3} \\
&\qquad \quad
+2\,{\Norm{\Ps{D^2\VV[\UU(f)]}{ \left( \Ps{D\UU[f]}{g-f}\otimes R^1_\UU(g,f)\right) }}}_{\GG_3} \\
&\qquad \quad
 +{\Norm{R^2_\VV(\UU(g),\UU(f))}}_{\GG_3}\\
\end{aligned}
\end{equation*}
and using the equations \eqref{eq:b3} to \eqref{eq:b5} and \eqref{eq:a3} to \eqref{eq:a5}, it gives
\begin{equation*}\label{}
\begin{aligned}
&{\norm{\SS(g) - \SS(f) - \Ps{D\SS[f]}{g-f} - \Ps{D^2\SS[f]}{(g-f)^{\otimes 2}}}}_{\GG_3} \\
& \qquad \leq 
\left[  \VV \right]_{C^{1,0}}  \, {\Norm{R^2_\UU(g,f)}}_{\GG_2} 
+\left[  \VV \right]_{C^{2,0}}  \, {\Norm{  R^1_\UU(g,f)}}_{\GG_2}^{2} \\
& \qquad\quad
+2\left[  \VV \right]_{C^{2,0}} \, {\Norm{ \Ps{D\UU[f]}{g-f} }}_{\GG_2}
{\Norm{ R^1_\UU(g,f) }}_{\GG_2} 
 +\left[  \VV \right]_{C^{2,1}} {\Norm{\UU(g)-\UU(f)}}_{\GG_2}^{3}\\
& \qquad\leq 
 \left[  \VV \right]_{C^{1,0}} \, 
 \left[ \UU \right]_{C^{2,\eta}_{\Lambda}} \, \Lambda(g,f) \, {\Norm{g-f}}_{\GG_1}^{2+\eta} \\
& \qquad\quad
 +\left[  \VV \right]_{C^{2,0}}  \,
 \left[  \UU \right]_{C^{1,\eta_1}_{\Lambda}}^2   
 \, \Lambda(g,f)^2 \, {\Norm{g-f}}_{\GG_1}^{2+2\eta_1} \\
 & \qquad\quad
+2\left[  \VV \right]_{C^{2,0}}  \,
 \left[  \UU \right]_{C^{1,0}_{\Lambda}}\, \left[  \UU \right]_{C^{1,\eta_1}_{\Lambda}}
 \, \Lambda(g,f)^2 \, {\Norm{g-f}}_{\GG_1}^{1+\eta_1+\eta_0}    \\
& \qquad\quad
 + \left[  \VV \right]_{C^{2,1}}  \left[  \UU \right]_{C^{0,\eta_0}_{\Lambda}}^3 \, \Lambda(g,f)^3 \, {\Norm{g-f}}_{\GG_1} ^{3\eta_0}.
\end{aligned}
\end{equation*}
Since $1+\eta_1+\eta_0 = 3\eta_0 = 2 + \eta < 2+2\eta_1$, we deduce
$$
\begin{aligned}
\left[ \SS \right]_{C^{2,\eta}_{\Lambda^{3}}} 
&
\leq   \left[  \VV \right]_{C^{1,0}} \, 
 \left[ \UU \right]_{C^{2,\eta}_{\Lambda}} 
+ \left[  \VV \right]_{C^{2,0}} \,
 \left[  \UU \right]_{C^{1,(1+2\eta)/3}_{\Lambda}}^2 \\
&+2\left[  \VV \right]_{C^{2,0}} \,
 \left[  \UU \right]_{C^{1,0}_{\Lambda}}\,\left[  \UU \right]_{C^{1,(1+2\eta)/3}_{\Lambda}} 
+\left[  \VV \right]_{C^{2,1}}  \left[  \UU \right]_{C^{0,(2+\eta)/3}_{\Lambda}}^3.
\end{aligned}
$$

\end{proof}

\begin{ex}\label{ex:R^l_phi}
Consider the pair $\FF$ and $\PPP_\GG$ in duality (Definition \ref{def:space-duality}) where $\FF\subset C_b(E)$, and consider $\varphi = \varphi_1\otimes\cdots\otimes \varphi_\ell \in \FF^{\otimes\ell}$. Then the application $R^\ell_\varphi$ defined in \eqref{eq:R^N_phi} 
is in $C^{2,1}(\PPP_\GG;\R)$. Consider $f,g \in \PPP_\GG$, then we have thanks to the multi-linearity of $R^\ell_\varphi$ (see \cite{MMchaos,MMWchaos}) that
$$
\bal
\left| R^\ell_\varphi(g) - R^\ell_\varphi(f) \right| 
        &\le \ell \,\| \varphi \|_{\FF\otimes (L^\infty)^{\ell-1}}\, \| g-f\|_{\GG}, \\
        \left| D R^\ell_\varphi[f](g-f) \right| 
        &\le \ell \,\| \varphi \|_{\FF\otimes (L^\infty)^{\ell-1}}\, \| g-f\|_{\GG}, \\ 
 \left| R^\ell_\varphi(g) - R^\ell_\varphi(f) - D R^\ell_\varphi[f](g-f) \right|    
 	&\le \frac{\ell(\ell-1)}{2} \, \| \varphi \|_{\FF^{\otimes 2}\otimes (L^\infty)^{\ell-2}}\, \| g-f\|_{\GG}^2 \\
	 \left|D^2 R^\ell_\varphi[f](g-f)^{\otimes 2} \right|    
 	&\le \frac{\ell(\ell-1)}{2} \, \| \varphi \|_{\FF^{\otimes 2}\otimes (L^\infty)^{\ell-2}}\, \| g-f\|_{\GG}^2 \\
\Big| R^\ell_\varphi(g) - R^\ell_\varphi(f) - D R^\ell_\varphi[f](g-f)&- D^2 R^\ell_\varphi[f](g-f)^{\otimes 2}\Big|  \\  
 	&\le \frac{\ell(\ell-1)(\ell-2)}{6} \, \| \varphi \|_{\FF^{\otimes 3}\otimes (L^\infty)^{\ell-3}} \, \| g-f\|_{\GG}^3,
\eal
$$
where we define
$$
\| \varphi \|_{\FF^{\otimes j}\otimes (L^\infty)^{\ell-j}} := \max_{i_1,\dots,i_j\text{ distincts in } [1,\ell] }
\left(  \| \varphi_{i_1}\|_{\FF} \cdots \| \varphi_{i_1}\|_{\FF} 
\prod_{k\neq i_1,\dots,i_j}  \| \varphi_{k}\|_{L^\infty} \right).
$$
\end{ex}


Since we shall need to endow the subspaces of probability measures in Definition~\ref{def:P_G} with metrics, let us give some useful examples.
We denote by $\PPP_{p}(\R^d)$ the space of probabilities with finite moments up to order 
$p$, more precisely $\PPP_{p}(\R^d) := \{ f\in  \PPP(\R^d) \;;\; \ps{|v|^p}{f}<\infty    \}$.

\begin{definition}[Monge-Kantorovich-Wasserstein distance]\label{def:wasserstein}
For $f,g\in  \PPP_{p}(\R^d)$ we define the distance
\begin{equation*}\label{}
W_p^p (f,g) := \inf_{\pi\in \Pi(f,g)} \int_{\R^d\times\R^d} \dist(x,y)^p \,\pi(dx,dy)
\end{equation*}
where $\Pi(f,g)$ is the set of probability measures on $\R^d\times\R^d$ with marginals $f$ and $g$ respectively.

In a analogous way, we also define, for $\mu,\nu \in \PPP(\PPP(\R^d))$ and a distance $D$ over $\PPP(\R^d)$, the distance
\begin{equation*}\label{}
\WW_{1,D} (\mu,\nu) := \inf_{\pi\in \Pi(\mu,\nu)} \int_{\PPP(\R^d)\times\PPP(\R^d)} D(f,g) \,\pi(df,dg)
\end{equation*}
where $\Pi(\mu,\nu)$ denotes the set of probability measures on $\PPP(\R^d)\times\PPP(\R^d)$ with marginals $\mu$ and $\nu$.
\end{definition}

\begin{definition}[Fourier based distances]\label{def:toscani}
For $f,g\in  \PPP_{s}(\R^d)$ we define the distance
\begin{equation}\label{eq:toscani}
\abs{f-g}_s := \sup_{\xi\in \R^d} \frac{\abs{\hat f(\xi) - \hat g(\xi)}}{\abs{\xi}^s}
\end{equation}
which is well defined if $f$ and $g$ have equal moments up to order $s-1$ if $s$ is a integer or $\lfloor s \rfloor$ if not.
We shall denote by $\HH^{-s}(\R^d)$ the space associated to this norm.
\end{definition}

\begin{definition}[General Fourier based distances]\label{def:general-toscani}
Let $k\in \N^*$ and set
$$
m_{\GG} := |v|^k, \qquad {\bf m}_{\GG} := \left( v^\alpha \right)_{\alpha \in \N^d}, \; |\alpha|\le k-1,
$$
and 
$$
v^\alpha = \left( v_1^{\alpha_1}, \dots, v_d^{\alpha_d} \right), \qquad 
\alpha=(\alpha_1,\dots,\alpha_d).
$$
We define
$$
\forall\, f\in \II  \PPP_{\GG} , \qquad {\norm{f}}_{\GG} = |f|_s := \sup_{\xi\in\R^d} \frac{|\hat f(\xi)|}{|\xi|^s}, \; s\in (0,k].
$$
We extend the above norm to $M^1_k(\R^d)$, where we denote $M^1(\R^d)$ the space of finite Radon measures and $M^1_k (\R^d)$ its subspace of measures with finite moments up to order $k$, in the following way. First we define for
$f\in M^1_{k-1}(\R^d) $ and $\alpha\in \N^d$, $|\alpha|\le k-1$ the moment
$$
M_\alpha[f] := \int_{\R^d} v^\alpha f(dv).
$$
For a fixed smooth function with compact support $\chi \in C^\infty_c(\R^d)$ such that $\chi =1$ over $\{ \xi\in \R^d , |\xi|\le 1\}$, we define the function $\MM_k [f]$ by its Fourier transform
$$
\hat \MM_k[f](\xi) := \chi(\xi) \left( \sum_{|\alpha|\le k-1} M_\alpha[f]\, \frac{\xi^\alpha}{\alpha!} \,(-i)^{|\alpha|}  \right).
$$
We define then the norm
\begin{equation}\label{eq:general-toscani}
\Nt f \Nt_k := |f-\MM_k[f]|_k + \sum_{|\alpha|\le k-1} |M_\alpha[f]|,
\end{equation}
where as above $|h|_k := \sup_\xi \frac{|\hat h(\xi)|}{|\xi|^k}$.

\end{definition}

\subsection{Abstract theorem}\label{ssec:abstheo}

We state the assumptions of our abstract theorem~\ref{thm:abstract}.

\begin{assump}[$N$-particle system]\label{A1}
The semigroup $T^N_t$ and its generator $G^N$ are well defined on $C_b(E^N)$ and are invariant under permutation so that $F^N_t$ is well defined. Moreover, we assume that the following conditions hold: 

\begin{enumerate}[(i)]

\item {\it Conservation constraint}: There exists a constraint function ${\bf m}_{\GG_1} : E\to \R^D$ and a subset ${\bf R}_{{\GG_1}} \subset \R^D$ such that defining the set
$$
\E_N := \{ V\in E^N; \ps{\mu^N_V}{{\bf m}_{\GG_1}} \in {\bf R}_{{\GG_1}} \}
$$
there holds
$$
\forall\, t\ge 0, \qquad \supp F^N_t \subset \E_N.
$$

\item {\it Propagation of integral moment bound}: There exists a weight function $m_{\GG_1}$ and a constant $C_{m_{\GG_1}}>0$ that does not depend on $N$, such that
$$
\forall\,N\ge 1 \qquad \sup_{t\ge 0}  \left\langle F^N_t , M^N_{m_{\GG_1}} \right\rangle \le C_{m_{\GG_1}}.
$$


\end{enumerate} 
\end{assump}

\begin{assump}[Assumptions for the existence of the pullback semigroup]\label{A2}
Consider the weight function $m_{\GG_1}$, the constraint function ${\bf m}_{\GG_1} : E \to \R^D$ and the set of constraints ${\bf R}_{\GG_1} \subset \R^D$ of Assumption {\bf (A1)}.
Then consider the associated probability space $ \PPP_{\GG_1}$ and the corresponding space of increments $\II \PPP_{\GG_1}$ as in Definition \ref{def:P_G}. Finally, consider a Banach space $\GG_1 \supset \II \PPP_{\GG_1}$ such that $\PPP_{\GG_1,{\bf r}}$ is endowed with the distance $\dist_{\GG_1}$ induced by the norm $\| \cdot \|_{\GG_1}$ for any constraint vector ${\bf r} \in {\bf R}_{\GG_1}$.

Assume that, for some $\delta\in (0,1]$ and some $\bar a \in (0,\infty)$, we have for any $a\in(\bar a,\infty)$ and ${\bf r} \in {\bf R}_{\GG_1}$: 
\begin{enumerate}[(i)]
\item The equation \eqref{eq:Q(ft)} generates a semigroup $S^{\infty}_t : \BB  \PPP_{\GG_1,a, {\bf r}}\to \BB  \PPP_{\GG_1,a, {\bf r}}$, which is $\delta$-H\"older continuous locally uniformly in time, in the sense that for any $\tau\in(0,\infty)$ there exists $C_\tau >0$ such that
$$
\forall\,f,g \in \BB  \PPP_{\GG_1,a, {\bf r}}, \qquad 
\sup_{0\le t\le \tau} \| S^{\infty}_t (g)- S^{\infty}_t (f) \|_{\GG_1} 
\le C_\tau \| g-f \|_{\GG_1}^{\delta}.
$$

\item The application $Q$ is bounded and $\delta$-H\"older continuous from $\BB  \PPP_{\GG_1,a, {\bf r}}$ into $\GG_1$.

\end{enumerate}

\end{assump}

\medskip

As a consequence os this assumption, the semigroups $S^\infty_t$ and $T^\infty_t$ are well-defined as well as the generator $G^\infty$ thanks to the following result \cite[Lemma 2.11]{MMchaos} (see also \cite[Lemma 4.1]{MMWchaos})
\begin{lemma}\label{lem:G_infty}
Assume {\bf (A2)}. For any $a \in (\bar a,\infty)$ and ${\bf r} \in {\bf R}_{\GG_1}$, the
pullback semigroup $T^\infty_t$ defined by 
$$ 
\forall \, f \in \BB \PPP_{\GG_1,a; {\bf r}}, \ \Phi \in  C_b( \BB
\PPP_{\GG_1,a, {\bf r}}), \quad T^\infty _t [\Phi](f) := \Phi\left( S^{\infty}_t (f)\right)
$$
is a $c_0$-semigroup of contractions on the Banach space $C_b( \BB \PPP_{\GG_1,a,{\bf r}})$. 

Its generator $G^\infty$ is an unbounded linear operator on $C_b(\BB P_{\GG_1,a,{\bf r}})$ with domain $\hbox{Dom}(G^\infty)$ containing $C^{1,\eta}_b( \BB \PPP_{\GG_1,a,{\bf r}})$. On the latter space, it is defined by the formula
\begin{equation}\label{eq:formulaGinfty}
\forall \, \Phi \in  C^{1,\eta}_b( \BB \PPP_{\GG_1,a,{\bf r}}), \ 
\forall \, f \in \BB \PPP_{\GG_1,a,{\bf r}}, \quad 
\left( G^\infty \Phi \right) (f) :=
\left \langle D\Phi[f], Q(f)\right\rangle.
\end{equation}

\end{lemma}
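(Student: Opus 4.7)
The plan is to verify the three pieces in turn: the semigroup/contraction property, strong continuity, and the generator formula on the subspace of $C^{1,\eta}_b$.

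First, the semigroup identity $T^\infty_{t+s} = T^\infty_t T^\infty_s$ is an immediate consequence of the semigroup property of $S^\infty_t$, since
\begin{equation*}
T^\infty_t\bigl(T^\infty_s[\Phi]\bigr)(f) = T^\infty_s[\Phi]\bigl(S^\infty_t(f)\bigr) = \Phi\bigl(S^\infty_s \circ S^\infty_t(f)\bigr) = \Phi\bigl(S^\infty_{t+s}(f)\bigr).
\end{equation*}
Because $S^\infty_t$ leaves $\BB\PPP_{\GG_1,a,{\bf r}}$ invariant by Assumption (A2)(i), we also get contractivity: $\|T^\infty_t[\Phi]\|_\infty \le \|\Phi\|_\infty$ for every $\Phi \in C_b(\BB\PPP_{\GG_1,a,{\bf r}})$.

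Next, for strong continuity, I would use Assumption (A2)(ii): the boundedness of $Q$ yields a constant $K = K(a,{\bf r})$ with $\|Q(g)\|_{\GG_1} \le K$ for all $g \in \BB\PPP_{\GG_1,a,{\bf r}}$. Since $S^\infty_t(f)$ solves \eqref{eq:Q(ft)}, integration in time gives
\begin{equation*}
\bigl\|S^\infty_t(f) - f\bigr\|_{\GG_1} \le \int_0^t \bigl\|Q(S^\infty_s(f))\bigr\|_{\GG_1}\,ds \le K t,
\end{equation*}
uniformly in $f \in \BB\PPP_{\GG_1,a,{\bf r}}$. Combined with the uniform continuity of $\Phi$ on this set (inherited from $\Phi \in C_b$), this yields $\sup_f |\Phi(S^\infty_t f) - \Phi(f)| \to 0$ as $t \to 0^+$, which is strong continuity at $0$; extension to arbitrary $t$ follows from the semigroup property.

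Finally, for the generator formula, I fix $\Phi \in C^{1,\eta}_b(\BB\PPP_{\GG_1,a,{\bf r}})$ and write
\begin{equation*}
\Phi\bigl(S^\infty_t(f)\bigr) - \Phi(f) = \bigl\langle D\Phi[f], S^\infty_t(f) - f \bigr\rangle + \RR(f,t),
\end{equation*}
where by the $C^{1,\eta}_b$ regularity $|\RR(f,t)| \le [\Phi]_{C^{1,\eta}} \|S^\infty_t(f) - f\|_{\GG_1}^{1+\eta} \le C t^{1+\eta}$ uniformly in $f$, so $\RR(f,t)/t \to 0$ in sup norm. For the main term, writing $S^\infty_t(f) - f = \int_0^t Q(S^\infty_s(f))\,ds$ and using the linearity of $D\Phi[f]$ gives
\begin{equation*}
\frac{1}{t}\bigl\langle D\Phi[f], S^\infty_t(f) - f\bigr\rangle = \frac{1}{t}\int_0^t \bigl\langle D\Phi[f], Q(S^\infty_s(f))\bigr\rangle\,ds.
\end{equation*}
The $\delta$-Hölder continuity of $Q$ in (A2)(ii) together with $\|S^\infty_s(f)-f\|_{\GG_1} \le Ks$ give $\|Q(S^\infty_s(f)) - Q(f)\|_{\GG_1} \to 0$ as $s \to 0^+$, uniformly in $f$, so passing to the limit under the time average yields $\langle D\Phi[f], Q(f)\rangle$. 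The convergence is uniform in $f$, which places $\Phi$ in the domain of $G^\infty$ with the stated formula.

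The main delicate point is the uniform-in-$f$ strong continuity on the $C_b$ space: I expect the argument to rely crucially on the uniform bound $\|S^\infty_t(f)-f\|_{\GG_1} \le Kt$ provided by the boundedness of $Q$, so that the modulus of continuity of $\Phi$ on the bounded set $\BB\PPP_{\GG_1,a,{\bf r}}$ transfers to a uniform rate in $t$. The remaining steps are bookkeeping via the differential calculus of Definition \ref{def:diff-calculus}.
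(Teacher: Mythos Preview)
The paper does not supply its own proof of this lemma: it is quoted directly from \cite[Lemma~2.11]{MMchaos} (see also \cite[Lemma~4.1]{MMWchaos}). Your sketch is the standard argument those references give: the semigroup and contraction properties are immediate from the invariance of $\BB\PPP_{\GG_1,a,{\bf r}}$ under $S^\infty_t$; the boundedness of $Q$ in {\bf (A2)(ii)} yields $\|S^\infty_t(f)-f\|_{\GG_1}\le Kt$ uniformly in $f$; and the generator formula then follows from the first-order Taylor expansion of $\Phi$ combined with the $\delta$-H\"older regularity of $Q$, exactly as you write.

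The one step you correctly flag as delicate but do not actually justify is the passage from the uniform bound $\|S^\infty_t(f)-f\|_{\GG_1}\le Kt$ to strong continuity of $T^\infty_t$ on all of $C_b(\BB\PPP_{\GG_1,a,{\bf r}})$. Your parenthetical that uniform continuity is ``inherited from $\Phi\in C_b$'' is not true in general: a continuous bounded function on a non-compact metric space need not be uniformly continuous. What saves the argument (and what the cited references use, implicitly or explicitly) is that $\BB\PPP_{\GG_1,a,{\bf r}}$ is compact for $\dist_{\GG_1}$ in the concrete settings considered --- the uniform moment bound $m_{\GG_1}$ gives tightness, and the Fourier-based norm metrises weak convergence on such sets --- so that $C_b=UC_b$ there. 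With that point supplied, your proof is complete and coincides with the one in the literature.
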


\medskip

\begin{assump}[Convergence of the generators]
Let $\PPP_{\GG_1}, m_{\GG_1}, \RRR_{\GG_1}$ be such as introduced in Assumption {\bf (A2)}. Define a weight function $1\le m'_{\GG_1}\le C m_{\GG_1}$ and the corresponding weight $\Lambda_1(f) := \langle f , m'_{\GG_1} \rangle $.

We assume that there exist a function $\eps(N)$ going to $0$ as $N\to\infty$ and $\eta \in (0,1]$ such that for all $\Phi \in \bigcap_{{\bf r} \in {\bf R}_{\GG_1}}  C^{2,\eta}_{\Lambda_1}(  \PPP_{\GG_1,{\bf r}};\R)$ we have
\begin{equation}\label{eq:hypA3}
\Norm{\left(M^N_{m_{\GG_1}}\right)^{-1} \left(G^N \pi^N_C  - \pi^N_C G^\infty \right)\Phi}_{L^\infty(\E_N)} \leq \eps(N) \sup_{{\bf r} \in \RRR_{\GG_1}} \left( \left[\Phi\right]_{C^{1,\eta}_{\Lambda_1}(  \PPP_{\GG_1,{\bf r}};\R)} + \left[\Phi\right]_{C^{2,0}_{\Lambda_1}(  \PPP_{\GG_1,{\bf r}};\R)}\right).
\end{equation}

\end{assump}

\medskip

\begin{assump}[Differential stability]
Consider a Banach space $\GG_2 \supset \GG_1$, $\GG_1$ defined in Assumption {\bf (A2)}, and the associated probability space $\PPP_{\GG_2}$ as in Definition \ref{def:P_G}, with weight function $m_{\GG_2}$, constraint function ${\bf m}_{\GG_2}$ and endowed with the metric induced from $\GG_2$.

We assume that the limit semigroup $S^\infty_t$ satisfies 
$$
S^{\infty}_t \in C^{2,\eta}_{\Lambda_2} \cap C^{1,(1+2\eta)/3}_{\Lambda_2} \cap C^{0,(2+\eta)/3}_{\Lambda_2}(  \PPP_{\GG_1,{\bf r}};  \PPP_{\GG_2}),
$$ 
for any ${\bf r} \in \RRR_{\GG_1}$, and that there exists $C_{4}^\infty>0$ such that
\begin{equation}\label{eq:hypA3}
\sup_{{\bf r} \in \RRR_{\GG_1}} \int_0^\infty \left( 
\left[ S^{\infty}_t \right]_{C^{1,(1+2\eta)/3}_{\Lambda_2}}
+ \left[ S^{\infty}_t \right]_{C^{0,(2+\eta)/3}_{\Lambda_2}}^2
+\left[ S^{\infty}_t \right]_{C^{2,0}_{\Lambda_2}}
+ \left[ S^{\infty}_t \right]_{C^{1,0}_{\Lambda_2}}^2   
 \right) \,dt \leq C_{4}^\infty,
\end{equation}
with $\Lambda_2 = \Lambda_1^{1/3}$ and where $\eta$ and $\Lambda_1$ are the same as in Assumption {\bf (A3)}.

\end{assump}

\medskip

\begin{assump}[Weak stability]
We assume that, for some probability space $  \PPP_{\GG_3}$ associated to a weight function $m_{\GG_3}$, a constraint ${\bf m}_{\GG_3}$, a set of constraints $\RRR_{\GG_3}$ and a distance $\dist_{\GG_3}$, there exists a constant $C_{5}^\infty>0$ such that for any ${\bf r}\in \RRR_{\GG_3}$,
\begin{equation}\label{eq:hypA5}
\forall\, f,g \in  \PPP_{\GG_3,{\bf r}}, \quad \sup_{t\ge 0} \,  \dist_{\GG_3}\left( S^{\infty}_t(f), S^{\infty}_t(g) \right)\leq C_{5}^\infty
\dist_{\GG_3} (f,g ).
\end{equation}

\end{assump}

\medskip

\begin{thm}[Abstract theorem]\label{thm:abstract} 
Let us consider a family of $N$-particle initial conditions $F^N_0 \in    \Psym(E^N)$ and the associated solution $F^N_t = S^N_t(F^N_0)$. Consider also a one-particle initial condition $f_0\in  \PPP(E)$ and the associated solution $f_t = S^{\infty}_t(f_0)$. 
Assume that Assumptions {\bf (A1)-({A2})-({A3})-({A4})-({A5})} hold for some spaces $  \PPP_{\GG_i}$, $\GG_i$ and $\FF_i$, $i=1,2,3$, with $\FF_i \subset C_b(E)$ and where $\FF_i$ and $\PPP_{\GG_i}$ are in duality.

Then there exists a constant $C\in(0,\infty)$ such that for any $N,\ell \in \N$, with $N\geq 2\ell$, and for any $\varphi = \varphi_1 \otimes \cdots \otimes \varphi_\ell \in \FF^{\otimes \ell}$, $\FF := \FF_1 \cap \FF_2 \cap \FF_3$
we have
\begin{equation}\label{eq:thm}
\begin{aligned}
&\sup_{t\ge 0} 
\left| \left\langle S^N_t (F^N_0) - \left(S^{\infty}_t(f_0)\right)^{\otimes N} , 
\varphi \otimes {\bf 1}^{N-\ell} \right\rangle   \right| \\
&\qquad
\leq C \Bigg[ \frac{\ell^2}{N}\, \|\varphi\|_{L^\infty} + C_{m_{\GG_1}}\,C_{4}^\infty\,\eps(N)\, \ell^3\, \|\varphi\|_{\FF_2^{\otimes 3}\otimes(L^\infty)^{\ell-3}} \\
&\qquad\qquad\qquad
+ C_{5}^\infty\, \ell\, \|\varphi\|_{\FF_3\otimes(L^\infty)^{\ell-1}} \,\WW_{1,\GG_3}(\pi^N_P F^N_0, \delta_{f_0})     \Bigg],
\end{aligned}
\end{equation}
where we recall that $\WW_{1,\GG_3}$ is defined in Definition \ref{def:wasserstein}.

As a consequence, if $F^N_0$ is $f_0$-chaotic the propagation of chaos holds uniformly in time and with quantitative rates in the number $N$ of particles (depending on quantitative rates of the chaoticity of the initial data).
\end{thm}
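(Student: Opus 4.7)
The plan is to decompose the global error $\langle F^N_t - f_t^{\otimes N}, \varphi\otimes \mathbf{1}^{N-\ell}\rangle$ into three contributions, each matched to one of the three terms on the right-hand side of \eqref{eq:thm}, and to bridge the $N$-particle and limit dynamics through the pullback semigroup $T^\infty_t$ in the spirit of the Mischler--Mouhot consistency--stability scheme.

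As a first step I would replace the partial test function $\varphi\otimes\mathbf{1}^{N-\ell}$ by the symmetrised ``polynomial'' $\pi^N_C R^\ell_\varphi$. By symmetry of $F^N_t$ and a direct combinatorial count of the $\ell$-tuples of indices that contain at least one repetition, the difference $\langle F^N_t,\varphi\otimes\mathbf{1}^{N-\ell}\rangle-\langle F^N_t,\pi^N_C R^\ell_\varphi\rangle$ is bounded by $C\,(\ell^2/N)\,\|\varphi\|_{L^\infty}$, which furnishes the first term in \eqref{eq:thm}. It then remains to estimate $\langle F^N_t,\pi^N_C R^\ell_\varphi\rangle - R^\ell_\varphi(f_t)$, which I would rewrite, using $\langle F^N_t,\pi^N_C R^\ell_\varphi\rangle=\langle F^N_0,T^N_t\pi^N_C R^\ell_\varphi\rangle$ and $R^\ell_\varphi(f_t)=\langle\delta_{f_0},T^\infty_t R^\ell_\varphi\rangle$, as
\[
\langle F^N_0,(T^N_t\pi^N_C-\pi^N_C T^\infty_t)R^\ell_\varphi\rangle \;+\; \langle \pi^N_P F^N_0-\delta_{f_0},\,T^\infty_t R^\ell_\varphi\rangle.
\]

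For the first summand I would apply the Duhamel identity
\[
T^N_t\pi^N_C-\pi^N_C T^\infty_t=\int_0^t T^N_{t-s}\bigl(G^N\pi^N_C-\pi^N_C G^\infty\bigr)T^\infty_s\,ds
\]
and pair with $F^N_0$, obtaining $\int_0^t \langle F^N_{t-s},(G^N\pi^N_C-\pi^N_C G^\infty)\Phi_s\rangle\,ds$ with $\Phi_s:=T^\infty_s R^\ell_\varphi=R^\ell_\varphi\circ S^\infty_s$. To activate Assumption \textbf{(A3)} I need $\Phi_s\in C^{2,\eta}_{\Lambda_1}$ with quantitative semi-norms. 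This is precisely what Example \ref{ex:R^l_phi} (giving $R^\ell_\varphi\in C^{2,1}$ with semi-norms of order $\ell$, $\ell^2$ and $\ell^3$ weighted by $\FF_2$-norms of $\varphi$) combined with the chain rule of Lemma \ref{lem:chain-rule} (applied with $\VV=R^\ell_\varphi$ and $\UU=S^\infty_s$, using the triple regularity of $S^\infty_s$ assumed in \textbf{(A4)} and the normalisation $\Lambda_1=\Lambda_2^3$) provides. Inserting the chain-rule bound into \textbf{(A3)} controls the integrand pointwise by $M^N_{m_{\GG_1}}(V)\,\eps(N)$ times the semi-norms of $\Phi_s$; the moment bound of \textbf{(A1)(ii)} then absorbs $\langle F^N_{t-s},M^N_{m_{\GG_1}}\rangle\le C_{m_{\GG_1}}$ uniformly in $s$ and $N$, and integration in time against the four quantities in \textbf{(A4)} yields a bound $\lesssim C_{m_{\GG_1}} C_4^\infty\,\eps(N)\,\ell^3\,\|\varphi\|_{\FF_2^{\otimes 3}\otimes (L^\infty)^{\ell-3}}$, where the dominant $\ell^3$ factor comes from $[R^\ell_\varphi]_{C^{2,1}}\,[S^\infty_s]_{C^{0,(2+\eta)/3}_{\Lambda_2}}^3$. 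This is the second term in \eqref{eq:thm}.

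For the initial-data piece, writing
\[
\langle \pi^N_P F^N_0-\delta_{f_0},T^\infty_t R^\ell_\varphi\rangle = \int_{\PPP(E)}\bigl(R^\ell_\varphi(S^\infty_t f)-R^\ell_\varphi(S^\infty_t f_0)\bigr)\,\pi^N_P F^N_0(df),
\]
I would combine the Lipschitz estimate of Example \ref{ex:R^l_phi} (in the $\FF_3$/$\PPP_{\GG_3}$ duality) with the weak stability of $S^\infty_t$ from \textbf{(A5)} to obtain
\[
\bigl|T^\infty_t R^\ell_\varphi(f)-T^\infty_t R^\ell_\varphi(g)\bigr|\le C_5^\infty\,\ell\,\|\varphi\|_{\FF_3\otimes(L^\infty)^{\ell-1}}\,\dist_{\GG_3}(f,g),
\]
and then optimise over couplings $\pi\in\Pi(\pi^N_P F^N_0,\delta_{f_0})$ to recover the third term $C_5^\infty\,\ell\,\|\varphi\|_{\FF_3\otimes(L^\infty)^{\ell-1}}\,\WW_{1,\GG_3}(\pi^N_P F^N_0,\delta_{f_0})$. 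The main obstacle is the consistency step: organising the various semi-norms of the composite $T^\infty_s R^\ell_\varphi$ through the second-order chain rule and matching them against the integral bound of \textbf{(A4)}. This is precisely the additional difficulty relative to \cite{MMchaos}, forced by the diffusive (rather than jump) structure of the Landau generator, which requires working at order two in the differential calculus on $\PPP(E)$.
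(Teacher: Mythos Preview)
Your proposal is correct and follows essentially the same three-term decomposition and the same consistency--stability argument as the paper: the combinatorial replacement of $\varphi\otimes\mathbf{1}^{N-\ell}$ by $\pi^N_C R^\ell_\varphi$ for $T_1$, the Duhamel identity combined with \textbf{(A1)(ii)}, \textbf{(A3)}, the chain rule (Lemma~\ref{lem:chain-rule}) and \textbf{(A4)} for $T_2$, and the Lipschitz estimate on $R^\ell_\varphi$ with \textbf{(A5)} for $T_3$. The only cosmetic difference is that in the last step there is in fact nothing to optimise over, since $\Pi(\pi^N_P F^N_0,\delta_{f_0})$ contains a single element $\pi^N_P F^N_0\otimes\delta_{f_0}$.
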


\begin{proof}[Proof of Theorem \ref{thm:abstract}] We split the term \eqref{eq:thm} into three parts:
\begin{equation*}
\begin{aligned}
&  \left\langle S^N_t (F^N_0) - \left(S^{\infty}_t(f_0)\right)^{\otimes N} , \varphi \otimes {\bf 1}^{N-\ell} \right\rangle   \\
&\qquad \leq   \left\langle S^N_t (F^N_0) , \varphi \otimes {\bf 1}^{N-\ell} \right\rangle 
- \left\langle S^N_t (F^N_0) , R^{\ell}_{\varphi}\circ \pi^N_E \right\rangle    \qquad &(=:T_1)\\
&\qquad\quad  +   \left\langle F^N_0  ,  T^N_t(R^{\ell}_{\varphi}\circ \pi^N_E ) \right\rangle 
- \left\langle F^N_0 , (T^{\infty}_t R^{\ell}_{\varphi})\circ \pi^N_E \right\rangle   \qquad&(=:T_2)\\
&\qquad\quad  +   \left\langle F^N_0  ,  (T^{\infty}_t R^{\ell}_{\varphi})\circ \pi^N_E \right\rangle 
- \left\langle \left(S^{\infty}_t(f_0)\right)^{\otimes N} , \varphi \otimes {\bf 1}^{N-\ell}  \right\rangle   \qquad&(=:T_3)
\end{aligned}
\end{equation*}
and we evaluate each of them. The first term is estimated by a combinatorial argument, which corresponds to the price we have to pay by using the injection based on the empirical measure. 
The second term is where the two dynamics are effectively compared, and in order to do so we first express the difference between the linear semigroups in terms of the difference of their generators, which are all well-defined thanks to Assumption {\bf (A1)} and {\bf (A2)} together with Lemma~\ref{lem:G_infty}; then the consistency estimate {\bf (A3)} on the generators, the stability estimate {\bf(A4)} on the limit semigroup and the Assumption {\bf (A1)} on the $N$-particle system yield a control of $T_2$.
Finally, thanks to the weak stability estimate {\bf (A5)} on the limit semigroup, the third term can be controlled in terms of the chaoticity of the initial data.

\medskip
\noindent
{\it Step 1.}
For the first term $T_1$, a classical combinatorial trick (see \cite{S6}, \cite[Lemma 2.14]{MMWchaos}, \cite[Lemma 3.3]{MMchaos}) implies
$$
| T_1 | \leq \frac{2\ell^2\norm{\varphi}_{L^{\infty}(E^\ell)}}{N}.
$$

\medskip
\noindent
{\it Step 2.}
We investigate now the second term $T_2$. First of all, thanks to Assumptions {\bf (A1)} and {\bf (A2)} the semigroups and generators are well-defined, and
we may rewrite $T_2$ as the difference of two dynamics in $C_b (\PPP(E))$ in the following way
$$
T_2 = \left\langle F^N_0 , (T^N_t \pi^N_C - \pi^N_C T^\infty_t) R^\ell_\varphi \right\rangle, 
$$ 
and we recall that by the definition of the generators it hold
$$
\frac{d}{ds} T^N_s = G^N T^N_s , \quad
\frac{d}{ds} T^\infty_s = G^\infty T^\infty_s.
$$ 
We then deduce that
\begin{equation*}
\begin{aligned}
T^N_t\pi^N_C    - \pi^N_C T^{\infty}_t 
&= -\int_0^t \frac{d}{ds} [ T^N_{t-s} \, \pi^N_C \, T^{\infty}_{s}   ] ds \\
 &= \int_0^t  T^N_{t-s} \, (G^N\pi^N_C - \pi^N_C G^\infty) \, T^{\infty}_{s}   ds,
\end{aligned}
\end{equation*}
from which we obtain, for any $t\ge 0$,
$$
\begin{aligned}
T_2 &= \left\langle F^N_0, \int_0^t    T^N_{t-s} \, (G^N\pi^N_C - \pi^N_C G^\infty) \, T^{\infty}_{s}   ds \, R^\ell_\varphi   \right\rangle \\
&= \int_0^t   \left\langle  F^N_0    ,  T^N_{t-s} \, (G^N\pi^N_C - \pi^N_C G^\infty) \, T^{\infty}_{s}    R^\ell_\varphi    \right\rangle \, ds.
\end{aligned}
$$
Then it follows that
\begin{equation}\label{eq:T2}
\begin{aligned}
|T_2| &\leq \int_0^\infty \Abs{ \Ps{(M^N_{m_{\GG_1}})S^N_{t-s}(F^N_0)}{(M^N_{m_{\GG_1}})^{-1}[G^N\pi^N_C - \pi^N_C G^\infty] \, T^{\infty}_{s} R^{\ell}_{\varphi}}    }ds\\
&\leq \sup_{t\ge 0} \Ps{M^N_{m_{\GG_1}}}{S^N_t(F^N_0)} \int_0^\infty {\Norm{(M^N_{m_{\GG_1}})^{-1}[G^N\pi^N_C - \pi^N_C G^\infty] \, T^{\infty}_{s} R^{\ell}_{\varphi}}}_{L^\infty(\E_N)} ds \\
&\leq C_{m_{\GG_1}} \, \eps(N)  \sup_{{\bf r}\in {\bf R}_{\!\GG_1}} \int_0^\infty \left({\left[T^{\infty}_{s} R^{\ell}_{\varphi}\right]}_{C^{1,\eta}_{\Lambda_1}(  \PPP_{\GG_1,{\bf r}};\R)} + {\left[T^{\infty}_{s} R^{\ell}_{\varphi}\right]}_{C^{2,0}_{\Lambda_1}(  \PPP_{\GG_1,{\bf r}};\R)} \right) \,ds,
\end{aligned}
\end{equation}
thanks to Assumptions {\bf (A1)} and {\bf (A3)}.

We fix ${\bf r} \in {\bf R}_{\GG_1}$ and we know that the application $ T^{\infty}_{s} R^{\ell}_{\varphi}  = R^{\ell}_{\varphi} \circ S^{\infty}_s$ and by Assumption {\bf (A4)} the limit (nonlinear) semigroup $S^\infty_s$ satisfies 
$$
S^{\infty}_s \in 
C^{2,\eta}_{\Lambda_2} \cap C^{1,(1+2\eta)/3}_{\Lambda_2} \cap C^{0,(2+\eta)/3}_{\Lambda_2} (  \PPP_{\GG_1,{\bf r}};  \PPP_{\GG_2}) .
$$ 
Moreover, with $\varphi \in \FF_2^{\otimes \ell}$ we have $R^{\ell}_{\varphi}\in C^{2,1} (  \PPP_{\GG_2};\R)$  (see Example~\ref{ex:R^l_phi}). Finally, by Lemma \ref{lem:chain-rule} we obtain that 
$$ 
T^{\infty}_{s} R^{\ell}_{\varphi}  \in C^{2,\eta}_{\Lambda_2^{3}} 
\cap C^{1,(1+2\eta)/3}_{\Lambda_2^{3}}
\cap C^{0,(2+\eta)/3}_{\Lambda_2^{3}} (\PPP_{\GG_1,{\bf r}};\R)
$$ 
with 
\begin{equation*}
\begin{aligned}
{\left[T^{\infty}_{s} R^{\ell}_{\varphi}\right]}_{C^{1,(1+2\eta)}_{\Lambda_2^{3}}(  \PPP_{\GG_1,{\bf r}};\R)} 
&\leq \norm{R^{\ell}_{\varphi}}_{C^{2,1}(\PPP_{\GG_2};\R)} \left( 
\left[ S^{\infty}_t \right]_{C^{1,(1+2\eta)/3}_{\Lambda_2}(  \PPP_{\GG_1,{\bf r}};  \PPP_{\GG_2})}
+ \left[ S^{\infty}_t \right]^2_{C^{0,(2+\eta)/3}_{\Lambda_2}(  \PPP_{\GG_1,{\bf r}};  \PPP_{\GG_2})}\right),\\ 
{\left[T^{\infty}_{s} R^{\ell}_{\varphi}\right]}_{C^{2,0}_{\Lambda_2^{3}}(  \PPP_{\GG_1,{\bf r}};\R)} 
&\leq \norm{R^{\ell}_{\varphi}}_{C^{2,1}(\PPP_{\GG_2};\R)}\left( 
\left[ S^{\infty}_t \right]_{C^{2,0}_{\Lambda_2}(\PPP_{\GG_1,{\bf r}};\PPP_{\GG_2})}
+ \left[ S^{\infty}_t \right]_{C^{1,0}_{\Lambda_2}(\PPP_{\GG_1,{\bf r}};\PPP_{\GG_2})}^2
\right).
\end{aligned}
\end{equation*}
From Assumption {\bf (A4)}, $\Lambda_2 = \Lambda_1^{1/3}$ and the estimate of $\norm{R^{\ell}_{\varphi}}_{C^{2,1}}$ in Example~\ref{ex:R^l_phi}, we can deduce, plugging the last estimate on \eqref{eq:T2}, that
\begin{equation}\label{}
\begin{aligned}
|T_2| &\leq 
C_{m_{\GG_1}} \,
  C_{4}^\infty  \, \eps(N) \, \ell^3\, \|\varphi\|_{\FF_2^{\otimes 3}\otimes(L^\infty)^{\ell-3}}.
\end{aligned}
\end{equation}

\medskip
\noindent
{\it Step 3.}
We rewrite the third term as
$$
\begin{aligned}
T_3 
&= \int_{E^N} R^{\ell}_{\varphi} \left( S^\infty_t (\mu^N_V) \right) \, F^N_0 (dV)
- \prod_{j=1}^\ell \int_E \varphi_j (v) \, S^\infty_t(f_0) (dv) \\
&= \int_{E^N} R^{\ell}_{\varphi} \left( S^\infty_t (\mu^N_V) \right) \, F^N_0 (dV)
- \int_{E^N} \left(\prod_{j=1}^\ell \int_E \varphi_j (v) \, S^\infty_t(f_0) (dv) \right) \, F^N_0 (dV) \\
&= \int_{E^N}  \left( R^{\ell}_{\varphi} \left( S^\infty_t (\mu^N_V) \right) 
- R^{\ell}_{\varphi} \left( S^\infty_t (f_0) \right)    \right) \, F^N_0(dV).
\end{aligned}
$$
We hence deduce, thanks to the Assumption {\bf (A5)} and the fact that $R^{\ell}_\varphi \in C^{0,1}(  \PPP_{\GG_3};\R)$ for $\varphi \in \FF_3^{\otimes\ell}$, that
\begin{equation}\label{eq:T3}
\begin{aligned}
|T_3| &
\leq \| R^{\ell}_{\varphi} \|_{C^{0,1}(\PPP_{\GG_3};\R)} 
\int_{E^N} \dist_{\GG_3}(S^{\infty}_t(\mu^N_V),S^{\infty}_t(f_0)) \, F^N_0(dV)  \\
&\leq \ell \, \|\varphi\|_{\FF_3\otimes(L^\infty)^{\ell-1}} \, C_{5}^\infty
\int_{E^N} \dist_{\GG_3}(\mu^N_V,f_0) \, F^N_0(dV). 
\end{aligned}
\end{equation}
With the definition of the Wasserstein distance (see Definition \ref{def:wasserstein}), since the set $\Pi(\pi^N_P F^N_0,\delta_{f_0}) = \{ \pi^N_P F^N_0 \otimes \delta_{f_0}  \}$ has only one element, 
we obtain
\begin{equation}\label{}
\begin{aligned}
\WW_{1,\GG_3}(\pi^N_P F^N_0,\delta_{f_0}) &:= \inf_{\pi\in \Pi(\pi^N_P F^N_0,\delta_{f_0})} \iint_{  \PPP(E) \times   \PPP(E) }
\dist_{\GG_3}(f,g) \, \pi(df,dg)\\
&= \iint_{  \PPP(E) \times   \PPP(E) }
\dist_{\GG_3}(f,g) \, \pi^N_P F^N_0(df) \, \delta_{f_0}(dg)\\
&= \int_{  \PPP(E) }
\dist_{\GG_3}(f,f_0) \, \pi^N_P F^N_0(df) \\
&= \int_{E^N}
\dist_{\GG_3}(\mu^N_V,f_0) \, F^N_0(dV).
\end{aligned}
\end{equation}
where we have used the definition of $\pi^N_P$ (see Section \ref{ssec:framework}) in the last equality.
We conclude then plugging this estimate on \eqref{eq:T3}.

\end{proof}


\section{Application to the Landau equation}\label{sec:app}

In this section we will use the consistency-stability method presented in the Section \ref{sec:abs-thm} to show the propagation of chaos for the Landau equation for Maxwellian molecules. 
In order to prove some estimates for the Landau equation that we need to apply the method of Section~\ref{sec:abs-thm}, we shall prove first the same type of estimates for the Boltzmann equation (as in \cite{MMchaos}) with a collisional kernel satisfying the grazing collisions \eqref{eq:grazing}. Then passing to the limit of grazing collisions we will recover the same results for the Landau equation.

\medskip

Our main theorems are:

\begin{thm}\label{thm:chaos}
Consider a $N$-particle initial condition $F^N_0 \in \Psym(\R^{dN})$ and, for all $t\ge0$, the associated solution of the $N$-particle Landau dynamics $F^N_t = S^N_t(F^N_0)$.
Consider also a one-particle initial condition $f_0\in  \PPP_6(\R^d)$, with zero momentum $\int v f_0 = 0$ and energy $\int |v|^2 f_0 =:\EE \in (0,\infty)$, and the associated solution of the limit (mean-field) Landau equation $f_t = S^\infty_t(f_0)$. 
Suppose further that there exists $\EE_0 \in (0,\infty)$ such that
\beqn\label{eq:suppF0}
\supp F^N_0 \subset \left\{ V \in \R^{dN} \mid  \frac1N \sum_{i=1}^N |v_i|^2 \le \EE_0 \right\}.
\eeqn
Then, for $\ell \in \N^*$, for all
$$
\varphi = \varphi_1\otimes\cdots \otimes \varphi_\ell \in \FF^{\otimes\ell}, \quad 
\FF := \left\{ \varphi:\R^d \to \R ;\; \| \varphi \|_\FF = \int_{\R^d} (1+|\xi|^6) |\hat\varphi(\xi)|\, d\xi <\infty  \right\},
$$
and for any $N\ge 2\ell$, there is a constant $C>0$ such that
\begin{equation*}
\begin{aligned}
&\sup_{t\ge 0} \left|\Ps{S^{N}_t(F^N_0) - (S^{\infty}_t(f_0))^{\otimes N}}{\varphi} \right|\\
&\qquad\leq
C\left[  \frac{\ell^2 }{N}\, \|\varphi\|_{L^\infty} + \frac{\ell^3}{N}\|\varphi\|_{\FF^3\otimes(L^\infty)^{\ell-3}} + \ell\,\|\varphi\|_{W^{1,\infty}\otimes(L^\infty)^{\ell-1}} \, \WW_{1,W_2}(\pi^N_P F^N_0,\delta_{f_0})  \right].
\end{aligned}
\end{equation*}
As a consequence, if $F^N_0$ is $f_0$-chaotic the third term of the right-hand side goes to $0$ when $N\to\infty$, which implies the propagation of chaos uniformly in time.

\end{thm}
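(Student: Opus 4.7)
The plan is to apply the abstract Theorem~\ref{thm:abstract} to the Landau setting, so the bulk of the work lies in choosing function spaces compatible with both~\eqref{Lmaster2} and the limit equation~\eqref{eq:landau}, and then verifying Assumptions {\bf (A1)}--{\bf (A5)}. Reading off the three error terms in~\eqref{eq:thm}, the natural choice is: $\PPP_{\GG_1}=\PPP_{\GG_2}$ the set of probability measures on $\R^d$ with finite moments up to order $6$, equipped with one of the Fourier-based norms $\Nt\cdot\Nt_k$ of Definition~\ref{def:general-toscani} and the constraint $\ps{f}{|v|^2}\leq \EE_0$; the associated dual class $\FF_1=\FF_2$ is the Fourier-weighted space $\FF$ of the statement. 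For the weak-stability step I would take $\PPP_{\GG_3}=\PPP_2(\R^d)$ endowed with the Wasserstein distance $W_2$, whose natural dual is $\FF_3=W^{1,\infty}(\R^d)$ by Kantorovich--Rubinstein. With a consistency rate $\eps(N)=1/N$ the abstract conclusion~\eqref{eq:thm} then reproduces the claim verbatim.

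The assumptions are checked as follows. {\bf (A1)} uses that~\eqref{Lmaster2} is conservative (Remark~\ref{rem:master}): hypothesis~\eqref{eq:suppF0} then implies that $\ps{F^N_t}{M^N_{|v|^2}}\le \EE_0$ is preserved, and higher moments up to order $6$ are propagated by passing to the grazing collisions limit in the corresponding Boltzmann moment estimates of~\cite{MMchaos}. {\bf (A2)} is the standard Cauchy theory for the Maxwellian Landau equation on bounded sets of $\PPP_{\GG_1}$, together with the boundedness of $Q_L$ on such sets (which uses the quadratic growth of $a$ and hence the moment condition). {\bf (A5)} is the $W_2$-contraction of the Landau flow established by Fontbona--Gu\'erin--M\'el\'eard~\cite{FonGueMe}. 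The two genuinely new assumptions are {\bf (A3)} and {\bf (A4)}.

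The main obstacle is therefore to establish these second-order estimates, whose necessity (as emphasised at the start of Section~\ref{sec:abs-thm}) reflects the diffusive nature of $Q_L$. For {\bf (A3)}, using~\eqref{eq:generateurLandau} I would expand $G^N_L[\Phi\circ\pi^N_E](V)$ as a sum over pairs involving $D\Phi[\mu^N_V]$ and $D^2\Phi[\mu^N_V]$; the leading part reconstructs $\ps{D\Phi[\mu^N_V]}{Q_L(\mu^N_V)}=(G^\infty\Phi)(\mu^N_V)$ through the weak form~\eqref{landau-faible}, while the extra $D^2\Phi$ terms, controlled by the quadratic growth of $a(v_i-v_j)$ and the linear growth of $b(v_i-v_j)$, contribute $O(1/N)$ after factoring out $M^N_{m_{\GG_1}}$ and bounding by $[\Phi]_{C^{1,\eta}_{\Lambda_1}}+[\Phi]_{C^{2,0}_{\Lambda_1}}$. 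For {\bf (A4)}, I would propagate the $C^{2,\eta}$ regularity of $S^\infty_t$ in the Fourier-based topology by linearising the Landau equation once and twice, and exploit the exponential decay of Fourier-based distances along the Maxwellian Landau flow to secure the time integrability required in~\eqref{eq:hypA3}. To bypass the technicalities introduced by the diffusive structure, the programme announced at the opening of Section~\ref{sec:app} is to first derive both estimates for the Boltzmann equation with a grazing kernel $b_\eps$ satisfying~\eqref{eq:grazing} (extending~\cite{MMchaos} by one extra order of differentiability) and then pass to the limit $\eps\to 0$ via the Taylor expansion~\eqref{taylor3}, so that the Landau estimates follow uniformly in $\eps$.
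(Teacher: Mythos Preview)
Your plan is essentially the paper's own proof: verify {\bf (A1)}--{\bf (A5)} with Fourier-based metrics and apply Theorem~\ref{thm:abstract}. Two small corrections are worth flagging. First, the spaces $\PPP_{\GG_1}$ and $\PPP_{\GG_2}$ coincide as sets, but the metrics are \emph{not} the same: the paper takes $\|\cdot\|_{\GG_1}=\Nt\cdot\Nt_2$ and $\|\cdot\|_{\GG_2}=\Nt\cdot\Nt_6$. The reason is structural: the second-order remainder $\psi_t=g_t-f_t-h_t-u_t$ in the stability estimate {\bf (A4)} has vanishing moments up to order~$5$, so it is naturally controlled in $|\cdot|_6$ (Lemma~\ref{lem:A4c}), not in $|\cdot|_2$; this is what forces the target space to be strictly larger. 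Second, {\bf (A5)} is not taken from~\cite{FonGueMe} but obtained by applying Tanaka's $W_2$-contraction~\cite{tanaka} for the Maxwellian Boltzmann equation and passing to the grazing limit (Lemma~\ref{lemma:A5}). Finally, the consistency estimate {\bf (A3)} is established by a direct computation on the Landau generator (Lemma~\ref{lem:A3}), not via grazing collisions; the grazing-limit strategy is reserved for {\bf (A2)}, {\bf (A4)}, {\bf (A5)} and the moment bound in {\bf (A1)}.
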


\begin{thm}\label{thm:Wchaos}
Consider the same framework of Theorem \ref{thm:chaos}. Assume moreover that $f_0\in  \PPP_{6}(\R^d)\cap L^p(\R^d)$ for some $p>1$ and let $F^N_0 := [ f_0^{\otimes N}]_{\SS^N(\EE)} \in \Psym(\SS^N(\EE))$ (observe that \eqref{eq:suppF0} is satisfied by this choice of initial data with $\EE_0 = \EE$). Then it holds:

\begin{enumerate}[(1)]
\item For all $0<\epsilon< 9[(7d+6)^2(d+9)]^{-1}$, there exists a constant $C_\epsilon>0$ such that
\begin{equation*}
\sup_{t\ge 0} \frac{W_1(F^N_t,f^{\otimes N}_t)}{N}\leq 
C_\epsilon \, N^{-\epsilon}.
\end{equation*}

\item For all $t\ge 0$, for all $N\in \N^*$
$$
\frac{W_1(F^N_t,\gamma^N)}{N} \le p(t),
$$
for a polynomial rate $p(t)\to 0$ as $t\to\infty$ and where $\gamma^N$ is the uniform probability measure on $\SS^N(\EE)$.

\end{enumerate}


\end{thm}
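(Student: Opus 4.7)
For part (1), the plan is to apply Theorem \ref{thm:chaos} and lift its weak-convergence conclusion to an estimate in the Wasserstein $W_1$ metric on $\R^{dN}$. Two ingredients are required: a quantitative control of the initial chaos distance $\WW_{1,W_2}(\pi^N_P F^N_0, \delta_{f_0})$, and an interpolation converting the dual-$\FF$ estimate into a $W_1$ estimate. Under the hypothesis $f_0 \in \PPP_6(\R^d) \cap L^p$ with the correct momentum and energy, the conditioned tensor product $F^N_0 = [f_0^{\otimes N}]_{\SS^N(\EE)}$ satisfies a polynomial chaoticity estimate
\begin{equation*}
\WW_{1,W_2}(\pi^N_P F^N_0, \delta_{f_0}) \le C N^{-\alpha(d,p)}
\end{equation*}
by \cite{KC} (building on \cite{HaurayMischler}). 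Plugging this bound into Theorem \ref{thm:chaos} gives, for test functions $\varphi \in \FF^{\otimes\ell}$ and $N \ge 2\ell$, a uniform-in-time dual estimate of the form $\sup_t |\langle F^N_t - f_t^{\otimes N}, \varphi \rangle| \le C \ell^3 N^{-\beta}\|\varphi\|_{\FF^{\otimes 3}\otimes(L^\infty)^{\ell-3}}$.

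The main obstacle is upgrading this Fourier-type weak estimate to $W_1$: the norm $\|\cdot\|_\FF$ controls a Sobolev-type norm of order six in frequency but not the Lipschitz norm. I would rely on the Hauray--Mischler equivalence for symmetric probabilities,
\begin{equation*}
\frac{W_1(F^N_t, f_t^{\otimes N})}{N} \le C\, \E_{F^N_t}\bigl[W_1(\mu^N_V, f_t)\bigr] = C\, \WW_{1,W_1}(\pi^N_P F^N_t, \delta_{f_t}),
\end{equation*}
and then interpolate: $W_1(\mu^N_V, f_t)$ is bounded by a power of a negative-order Sobolev (or Fourier-based) distance, itself controlled by Theorem \ref{thm:chaos} applied with $\ell = 1$, multiplied by moments raised to a power. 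The moments of $F^N_t$ are propagated uniformly in $N$ and $t$ by the conservation laws (Assumption \textbf{(A1)} verified for the Landau master equation), and those of $f_t$ by standard theory for Maxwellian Landau. Optimizing the various interpolation exponents, with the dimensional loss coming from embedding empirical measures into $W_1$ on $\R^d$, produces the admissible range $\epsilon < 9[(7d+6)^2(d+9)]^{-1}$.

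For part (2), I would decompose
\begin{equation*}
\frac{W_1(F^N_t, \gamma^N)}{N} \le \frac{W_1(F^N_t, f_t^{\otimes N})}{N} + \frac{W_1(f_t^{\otimes N}, \gamma^{\otimes N})}{N} + \frac{W_1(\gamma^{\otimes N}, \gamma^N)}{N},
\end{equation*}
control the first term by part (1), the middle one by the tensor sub-additivity $W_1(f_t^{\otimes N}, \gamma^{\otimes N})/N \le W_1(f_t, \gamma)$ together with the classical polynomial trend to equilibrium $W_1(f_t, \gamma) \le q(t)$ of the Maxwellian Landau equation \cite{Vi1,Vi2}, and the last one by the quantitative chaoticity of the uniform measure $\gamma^N$ on $\SS^N(\EE)$ with respect to the Gaussian $\gamma$, again from \cite{KC}. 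Taking the polynomial envelope of these three contributions, optimized in $N$, yields the desired $p(t)$.
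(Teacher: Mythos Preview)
Your approach to Part~(1) is essentially that of the paper: apply Theorem~\ref{thm:chaos}, convert the dual-$\FF$ estimate on marginals into a negative-Sobolev bound (via $H^s\subset\FF$ for $s>6+d/2$), then invoke the Hauray--Mischler interpolation between $W_1(F^N,f^{\otimes N})/N$ and negative-Sobolev distances on low marginals, with moment bounds propagated uniformly. One small correction: the Hauray--Mischler inequality used here is phrased in terms of the \emph{second} marginal $\|\Pi_2(F^N_t)-f_t^{\otimes 2}\|_{H^{-s}}$, so you want Theorem~\ref{thm:chaos} with $\ell=2$, not $\ell=1$; the pathwise quantity $\E_{F^N_t}[W_1(\mu^N_V,f_t)]$ is not directly controlled by convergence of the first marginal alone.

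Part~(2) has a genuine gap. Your three-term split gives
\[
\frac{W_1(F^N_t,\gamma^N)}{N}\le C_\eps N^{-\eps}+W_1(f_t,\gamma)+C\,N^{-\theta},
\]
but the first and last terms do \emph{not} decay in $t$ for fixed $N$. The statement demands a single function $p(t)\to 0$ valid for \emph{every} $N\in\N^*$, so you cannot ``optimize in $N$'': $N$ is universally quantified, not a free parameter. The paper closes this by bringing in a second, complementary estimate---the $L^2(\gamma^N)$ spectral-gap decay of Kiessling--Lancellotti \cite{KL} for the $N$-particle Landau semigroup on $\SS^N(\EE)$---which yields
\[
\frac{W_1(F^N_t,\gamma^N)}{N}\le \EE^{1/2}\,\|h^N_t-1\|_{L^2(\gamma^N)}\le A^N e^{-\lambda_1 t}
\]
for some $A>1$ depending on $f_0$. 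One then sets a threshold $N(t)\sim \lambda_1 t/(2\log A)$ and uses the chaos-based bound for $N>N(t)$ and the spectral-gap bound for $N\le N(t)$; since $N(t)$ grows linearly in $t$, the worst case $N(t)^{-\eps}$ produces the polynomial rate $p(t)$. Without this second ingredient the argument does not close.
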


\begin{rem}
This theorem also holds (with different quantitative rates) for other choices of initial data $F^N_0$ that are $f_0$-chaotic. In particular, if we consider $f_0 \in \PPP_6(\R^d)$ with compact support and $F^N_0 = f_0^{\otimes N} \in \Psym(\R^{dN})$. 
\end{rem}

The proof of Theorem~\ref{thm:chaos} relies on the proof of assumptions {\bf (A1)-(A2)-(A3)-(A4)-(A5)}, with a suitable choice of spaces, and then on the application of Theorem~\ref{thm:abstract}.
Furthermore, we shall prove Theorem~\ref{thm:Wchaos} using Theorem~\ref{thm:chaos} and some results from \cite{HaurayMischler,MMchaos,KC}.

\subsection{Proof of assumption (A1)}

Consider the $N$-particle SDE \eqref{eds0}. Since $b$ and $\sigma$ are Lipschitz, existence and uniqueness hold by standard arguments (see \cite[Chapter 5]{SV}). Hence it defines a semigroup $T^N_t$, we can then define its generator $G^N=G^N_L$ (given by \eqref{eq:generateurLandau}-\eqref{Lmaster2}) and its dual semigroup $S^N_t$, as explained in Section \ref{ssec:framework}.

\medskip

We have the following lemma.

\begin{lemma}\label{lem:A1i0}
The dynamics of the $N$-particle system \eqref{Lmaster2} conserves momentum and energy, more precisely there holds, for all $t\ge0$,
$$
\int_{\R^{dN}} \varphi\left( \sum_{i=1}^N v_{i,\alpha} \right) F^N_t(dV)
=\int_{\R^{dN}}  \varphi\left( \sum_{i=1}^N v_{i,\alpha} \right) F^N_0(dV)
, \qquad \alpha\in \{1,\dots,d  \}
$$
and
$$
\int_{\R^{dN}}  \varphi\left( |V|^2 \right) F^N_t(dV)
=\int_{\R^{dN}}  \varphi\left( |V|^2 \right) F^N_0(dV).
$$
\end{lemma}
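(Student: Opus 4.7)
The plan is to insert the two kinds of test functions directly into the weak form \eqref{Lmaster2} and to show that the generator $G^N_L$ annihilates them; by the arbitrariness of the smooth scalar function $\psi$, this gives the stated conservation laws.

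For the momentum part, I would take $\varphi(V)=\psi\bigl(\sum_{i=1}^{N}v_{i,\alpha}\bigr)$ with $\psi\in C^2_b(\R)$. Then $\nabla_i\varphi = \psi'(\cdot)\,e_\alpha$ does not depend on $i$, so $\nabla_i\varphi-\nabla_j\varphi\equiv 0$ and the drift integrand vanishes pointwise; similarly every Hessian block satisfies $\nabla^2_{ij}\varphi = \psi''(\cdot)\,e_\alpha\otimes e_\alpha$, so the symmetric bracket $\nabla^2_{ii}\varphi+\nabla^2_{jj}\varphi-\nabla^2_{ij}\varphi-\nabla^2_{ji}\varphi$ is identically zero. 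Hence $G^N_L\varphi\equiv 0$, which gives $\partial_t\ps{F^N_t}{\varphi}=0$ and therefore the first identity of the lemma for every smooth $\psi$ (and by approximation for bounded measurable $\psi$, as needed).

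The energy part requires a more substantial computation. With $\varphi(V)=\psi(|V|^2)$ one has $\nabla_i\varphi = 2\psi'(|V|^2)\,v_i$, while
\begin{equation*}
\nabla^2_{ii}\varphi = 2\psi'(|V|^2)\,I + 4\psi''(|V|^2)\,v_i\otimes v_i, \qquad
\nabla^2_{ij}\varphi = 4\psi''(|V|^2)\,v_i\otimes v_j \quad (i\neq j),
\end{equation*}
so that for $i\neq j$
\begin{equation*}
\nabla^2_{ii}\varphi+\nabla^2_{jj}\varphi-\nabla^2_{ij}\varphi-\nabla^2_{ji}\varphi
= 4\psi'(|V|^2)\,I + 4\psi''(|V|^2)\,(v_i-v_j)\otimes(v_i-v_j),
\end{equation*}
while the $i=j$ contribution vanishes identically. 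Using $b(z)=-\Lambda(d-1)z$ from \eqref{fonctions-bc} the drift integrand equals $-2\Lambda(d-1)\psi'(|V|^2)\,|v_i-v_j|^2$. Using $a(z)=\Lambda|z|^2\Pi(z)$ from \eqref{fonction-a}, the two crucial algebraic facts $\Pi(z)z=0$ (so $a(z):z\otimes z=0$) and $\mathrm{tr}\,a(z)=\Lambda(d-1)|z|^2$ collapse the diffusive integrand to $4\Lambda(d-1)\psi'(|V|^2)\,|v_i-v_j|^2$. Multiplying by $1/N$ and $1/(2N)$ respectively and summing over $i,j$ yields
\begin{equation*}
G^N_L\varphi(V)= \Bigl(-\tfrac{2\Lambda(d-1)}{N}+\tfrac{2\Lambda(d-1)}{N}\Bigr)\psi'(|V|^2)\sum_{i,j}|v_i-v_j|^2 = 0,
\end{equation*}
from which the second identity follows as before.

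The main obstacle is the energy computation: one must keep track of the compensation between the first-order drift and the symmetric diffusion bracket, and this hinges on using the full conservative structure of \eqref{Lmaster2}, i.e.\ the presence of both cross terms $-\nabla^2_{ij}\varphi-\nabla^2_{ji}\varphi$. As pointed out in Remark~\ref{rem:master}, the probabilistic master equation \eqref{Lmaster1} is missing precisely these cross terms, and the same computation then leaves a non-zero residue proportional to $\psi'(|V|^2)\sum_{i}\mathrm{tr}(a(v_i-v_j))$; this is exactly why that variant of the master equation fails to conserve energy.
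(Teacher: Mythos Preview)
Your proof is correct and follows essentially the same approach as the paper: insert the test function into $G^N_L$, compute the gradients and Hessian blocks, and use the algebraic identities $\Pi(z)z=0$ and $\mathrm{tr}\,a(z)=\Lambda(d-1)|z|^2$ to obtain the exact cancellation between the drift and diffusion terms. One small inaccuracy in your closing remark on the non-conservative variant \eqref{Lmaster1}: the $\psi'$ contributions still cancel there (which is why $\langle F^N_t,|V|^2\rangle$ itself is conserved, as noted in Remark~\ref{rem:conservative}); the genuine residue arises from the $\psi''$ terms, since without the cross derivatives the diffusion bracket contains $a(v_i-v_j):(v_i\otimes v_i+v_j\otimes v_j)$ rather than $a(v_i-v_j):(v_i-v_j)\otimes(v_i-v_j)=0$.
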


\begin{rem}\label{rem:conservative}
We can easily observe during the proof that if we consider the $N$-particle system of Remark~\ref{rem:master} with generator $G^N_2$ \eqref{Lmaster1}, which is different from the system we considered here \eqref{Lmaster2}, we have conservation of energy
$$
\partial_t \left\langle f^N_t, \sum_{i=1}^{N} |v_i|^2 \right\rangle = 
\left\langle f^N_t, G^N_2 \sum_{i=1}^{N} |v_i|^2 \right\rangle = 0,
$$ 
however this is not true for all functions $\varphi=\varphi(|V|^2)$. Then, Lemma~\ref{lem:A1i}, which is a consequence of this lemma, does not hold for the $N$-particle system of Remark~\ref{rem:master}.
\end{rem}

\begin{proof}[Proof of Lemma \ref{lem:A1i0}]
Let us prove the second equality (energy conservation), the proof of the first one (momentum conservation) being similar.
Consider the Landau master equation \eqref{Lmaster2} and $\varphi(V)=\varphi(\abs{V}^2)$ smooth enough, we have then
$$
\nabla_{i}(\varphi(\abs{V}^2)) = \left( \partial_{v_{i,\alpha}} \varphi( |V|^2) \right)_{1\leq\alpha\leq d}
=2 \varphi'(|V|^2)v_i
$$
and, for $i\not =j$,
$$
\begin{aligned}
& \partial_{v_{i,\alpha}}\partial_{v_{j,\beta}} {\varphi(\abs{V}^2)}  = 
4\varphi''(\abs{V}^2)v_{i,\alpha}v_{j,\beta} \\
 &\partial_{v_{i,\alpha}}\partial_{v_{i,\beta}} {\varphi(\abs{V}^2)} 
= 4\varphi''(\abs{V}^2)v_{i,\alpha}v_{i,\beta} +2\varphi'(|V|^2)\delta_{\alpha\beta} .
\end{aligned}
$$
Denoting $\varphi'=\varphi'(\abs{V}^2)$ and $\varphi''=\varphi''(\abs{V}^2)$ for simplicity, we obtain
$$
\begin{aligned}
(\nabla^2_{ii}\varphi +\nabla^2_{jj}\varphi -\nabla^2_{ij}\varphi -\nabla^2_{\!ji}\varphi)_{\alpha\beta} 
&= 4\varphi'\delta_{\alpha\beta} 
+ 4\varphi'' \left( v_{i,\alpha}v_{i,\beta} + v_{j,\alpha}v_{j,\beta}
- v_{i,\alpha}v_{j,\beta} - v_{j,\alpha}v_{i,\beta}    \right)\\
&=4\varphi'\delta_{\alpha\beta} 
+ 4\varphi'' (v_i-v_j)_\alpha(v_i-v_j)_\beta.
\end{aligned}
$$
Therefore we have
$$
\begin{aligned}
b(v_i-v_j)(\nabla_i\varphi(\abs{V}^2) - \nabla_j \varphi(\abs{V}^2)) &= -2|v_i-v_j|^{\gamma}(v_i-v_j)\cdot2\varphi'(\abs{V}^2)(v_i-v_j)\\
 &= -4\varphi'(\abs{V}^2)|v_i-v_j|^{\gamma +2} \\
\end{aligned}
$$
and
$$
\begin{aligned}
& a(v_i-v_j):(\nabla^2_{ii}\varphi +\nabla^2_{jj}\varphi -\nabla^2_{ij}\varphi -\nabla^2_{\!ji}\phi)=\\
&\quad = |v_i-v_j|^{\gamma}\sum_{\alpha,\beta=1}^{d}\Bigg\{
\left[|v_i-v_j|^2\delta_{\alpha\beta}-(v_i-v_j)_\alpha(v_i-v_j)_\beta\right]4\varphi'\delta_{\alpha\beta} \\
&\quad\quad 
+\left[|v_i-v_j|^2\delta_{\alpha\beta}-(v_i-v_j)_\alpha(v_i-v_j)_\beta\right]
 4\varphi''(v_i-v_j)_\alpha(v_i-v_j)_\beta\Bigg\}\\
&\quad =: |v_i-v_j|^{\gamma}\{ T_1 + T_2\}.
\end{aligned}
$$
Computing $T_1$ we have
$$
\begin{aligned}
T_1&=4\varphi'\sum_{\alpha,\beta=1}^{d} \left[|v_i-v_j|^2\delta_{\alpha\beta}-(v_i-v_j)_\alpha(v_i-v_j)_\beta\right]\delta_{\alpha\beta}\\
&=8\varphi' \abs{v_i-v_j}^2,
\end{aligned}
$$
and computing $T_2$
$$
\begin{aligned}
T_2&=4\varphi''\sum_{\alpha,\beta=1}^{d} 
\left[|v_i-v_j|^2(v_i-v_j)_\alpha(v_i-v_j)_\beta\delta_{\alpha\beta}-(v_i-v_j)^2_\alpha(v_i-v_j)^2_\beta\right]\\
&=4\varphi''\left\{ 
 |v_i-v_j|^4 - \left[ \sum_{\alpha=1}^d (v_i-v_j)^2_\alpha  \right]^2
\right\}\\
&=0.
\end{aligned}
$$
Gathering previous estimates we obtain
$$
\begin{aligned}
&b(v_i-v_j)(\nabla_i\varphi - \nabla_j \varphi) 
+ \frac{1}{2}a(v_i-v_j):(\nabla^2_{ii}\varphi +\nabla^2_{jj}\varphi -\nabla^2_{ij}\varphi -\nabla^2_{\!ji}\varphi)\\
&\quad= -4\varphi'\,|v_i-v_j|^{\gamma +2} 
+ \frac{1}{2}8\varphi' \, \abs{v_i-v_j}^{\gamma +2} \\
&\quad=0,
\end{aligned}
$$
which implies, for all $t\ge 0$,
$$
\begin{aligned}
\int \varphi(\abs{V}^2) \, F^N_t (dV) = \int \varphi(\abs{V}^2) \, F^N_0( dV).
\end{aligned}
$$

\end{proof}

\begin{lemma}\label{lem:A1i}
Consider $F^N_0$ such that 
$$
\supp F^N_0 \subset \{V \in \R^{dN} ; \; M^N_2(V)= \frac1N \sum_{i=1}^N |v_i|^2 \leq \EE_0\}.
$$
Then there holds
$$
\forall\, t>0, \quad \supp F^N_t \subset \{V \in \R^{dN} ; \; M^N_2(V)\leq \EE_0\}.
$$
\end{lemma}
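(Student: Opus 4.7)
The plan is to deduce this support property directly from the previous lemma (Lemma~\ref{lem:A1i0}), which already asserts that for sufficiently smooth test functions $\varphi\colon \R_+\to \R$,
\begin{equation*}
\int_{\R^{dN}} \varphi(|V|^2)\, F^N_t(dV) = \int_{\R^{dN}} \varphi(|V|^2)\, F^N_0(dV), \qquad \forall\, t\ge 0.
\end{equation*}
In other words, the pushforward of $F^N_t$ under the map $V\mapsto |V|^2$ does not depend on time. Since the total energy $|V|^2$ is conserved (pointwise in $V$ if one reads the computation carefully, or at least in law), the support property should be a trivial consequence. The observation $\{M^N_2(V)\le \EE_0\} = \{|V|^2 \le N\EE_0\}$ will translate everything to the one-dimensional energy variable.

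To carry this out, I would fix $\eps>0$ and pick an auxiliary non-negative function $\varphi_\eps \in C^2(\R_+)$ satisfying $\varphi_\eps \equiv 0$ on $[0, N(\EE_0+\eps)]$ and $\varphi_\eps(x) > 0$ for $x > N(\EE_0+\eps)$, for instance $\varphi_\eps(x) := \bigl((x - N(\EE_0+\eps))_+\bigr)^3$. By the assumption $\supp F^N_0 \subset \{|V|^2 \le N\EE_0\}$, the right-hand side of the displayed identity vanishes, so by Lemma~\ref{lem:A1i0} the left-hand side vanishes as well. Since $\varphi_\eps(|V|^2)\ge 0$ everywhere and $\varphi_\eps(|V|^2) > 0$ on $\{|V|^2 > N(\EE_0+\eps)\}$, this forces $F^N_t\bigl(\{|V|^2 > N(\EE_0+\eps)\}\bigr) = 0$, hence $\supp F^N_t \subset \{M^N_2(V)\le \EE_0+\eps\}$. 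Letting $\eps\downarrow 0$ yields the claim.

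The only technical point to be careful about is justifying the application of Lemma~\ref{lem:A1i0} to $\varphi_\eps$, which grows cubically at infinity together with its first two derivatives. This requires enough polynomial moments of $F^N_t$ so that the integrations by parts in the proof of Lemma~\ref{lem:A1i0} are legitimate. Such moment bounds follow either from the Lipschitz/linear growth structure of the coefficients $b$ and $\sigma$ in the SDE~\eqref{eds0} by standard stochastic calculus arguments, or from a preliminary localisation: one first tests against $\varphi_\eps\cdot \chi_R$ for a smooth cutoff $\chi_R$ with $\chi_R\equiv 1$ on $[0,R]$ and $\chi_R\equiv 0$ outside $[0,2R]$, applies Lemma~\ref{lem:A1i0} to this bounded-support test function, and then sends $R\to\infty$ using monotone convergence since $\varphi_\eps\ge 0$. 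I expect this localisation step to be the only mildly delicate part; once it is handled, the conclusion is immediate.
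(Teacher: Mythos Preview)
Your proof is correct and follows essentially the same route as the paper: both deduce the support property from Lemma~\ref{lem:A1i0} by testing against a smooth non-negative approximation of the indicator $\mathbf{1}_{\{|V|^2>N\EE_0\}}$ and then passing to the limit. The paper uses a mollified indicator $\varphi_\eta = \rho_\eta * \mathbf{1}_{|V|^2>N\EE_0}$ (which is bounded, so no growth issue arises) rather than your cubic cutoff with an $\eps$-shift, but this is a cosmetic difference.
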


\begin{proof}[Proof of Lemma \ref{lem:A1i}]
It is a consequence of Lemma \ref{lem:A1i0}, with $\varphi(|V|^2) = {\bf 1}_{|V|^2 > N\EE_0}$. 
Consider a mollifier $(\rho_\eta)$ for $\eta>0$, i.e. $\rho_\eta(x) = \eta^{-1}\rho(\eta^{-1}x)$, with $\rho\in C^\infty_c (\R)$, $\rho \ge 0$ and $\supp \rho \subset B_1$, and define $\varphi_\eta = \rho_\eta * \varphi$. Using Lemma~\ref{lem:A1i0} we have, for all $\eta$ and for all $t\ge 0$,
$$
\int_{\R^{dN}} \varphi_\eta \, F^N_t(dV) = \int_{\R^{dN}} \varphi_\eta \, F^N_0(dV).
$$
Passing to the limit $\eta\to 0$ we obtain
$$
\int_{\R^{dN}} {\bf 1}_{|V|^2 > N\EE_0} \, F^N_t(dV) = \int_{\R^{dN}} {\bf 1}_{|V|^2 > N\EE_0} \, F^N_0(dV) = 0.
$$
\end{proof}

\begin{lemma}\label{lem:A1ii}
Consider $F^N_0$ such that $\Ps{F^N_0}{M^N_k} \le C_k$ for $k>2$. Then there holds
$$
\sup_{t\ge 0}\Ps{F^N_t}{M^N_k} \le C_k.
$$
\end{lemma}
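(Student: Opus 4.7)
The plan is to establish a Gronwall-type inequality
\begin{equation*}
\frac{d}{dt}\langle F^N_t, M^N_k \rangle \le -\lambda_k \langle F^N_t, M^N_k \rangle + K_k,
\end{equation*}
with $\lambda_k > 0$ and $K_k$ independent of $N$ and $t$. From \eqref{Lmaster2} and the exchangeability of $F^N_t$ I would rewrite this as $\langle F^N_t, G^N_L |v_1|^k\rangle \le -\lambda_k \langle F^N_t, |v_1|^k\rangle + K_k$, so the task reduces to a pointwise bound on $G^N_L|v_1|^k$.

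Next I would compute $G^N_L |v_1|^k$ directly from \eqref{eq:generateurLandau}, using $b(z) = -\Lambda(d-1)z$, $a(z):I = \Lambda(d-1)|z|^2$, $a(z):w\otimes w = \Lambda[|z|^2|w|^2 - (z\cdot w)^2]$, and the observation that only derivatives with respect to $v_1$ contribute. The key Maxwellian cancellation arises because the terms of the form $|v_1|^{k-2}\, v_1 \cdot \sum_{j\ne 1} v_j$ produced by $b(v_1-v_j)$ exactly cancel those extracted from the expansion $|v_1-v_j|^2 = |v_1|^2 - 2 v_1\cdot v_j + |v_j|^2$ in the diffusion; combined with the algebraic identity $|v_1-v_j|^2 |v_1|^2 - ((v_1-v_j)\cdot v_1)^2 = |v_1|^2|v_j|^2 - (v_1\cdot v_j)^2$, this leaves
\begin{equation*}
G^N_L |v_1|^k = -\tfrac{k\Lambda(d-1)(N-1)}{N}|v_1|^k + \tfrac{k\Lambda(d-1)}{N}|v_1|^{k-2}\!\!\sum_{j\ne 1}|v_j|^2 + \tfrac{k(k-2)\Lambda}{N}|v_1|^{k-4}\!\!\sum_{j\ne 1}\!\bigl(|v_1|^2|v_j|^2 - (v_1\cdot v_j)^2\bigr).
\end{equation*}

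I would then invoke the pointwise energy bound $\sum_{j\ne 1}|v_j|^2 \le NE_0 - |v_1|^2$ from Lemma~\ref{lem:A1i} (applicable since the $F^N_0$ considered in Theorem~\ref{thm:chaos} satisfies $\supp F^N_0 \subset \{M^N_2 \le E_0\}$) and the trivial inequality $(v_1 \cdot v_j)^2 \ge 0$. The three terms collapse to the clean pointwise bound $G^N_L |v_1|^k \le -k\Lambda(d-1)|v_1|^k + k\Lambda(d+k-3)E_0\,|v_1|^{k-2}$. Applying Young's inequality $|v_1|^{k-2} \le \epsilon|v_1|^k + C(\epsilon,k)$ with $\epsilon$ small enough to halve the negative coefficient then produces $G^N_L |v_1|^k \le -\tfrac{1}{2} k\Lambda(d-1)|v_1|^k + K_k$ with $K_k = K_k(k,d,\Lambda,E_0)$; integration against $F^N_t$ yields the Gronwall inequality above and hence $\sup_{t\ge 0}\langle F^N_t, M^N_k\rangle \le \langle F^N_0, M^N_k\rangle + 2K_k/(k\Lambda(d-1))$.

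The main difficulty lies in the fact that without the pointwise energy constraint, the positive cross-term $\tfrac{k\Lambda}{N}|v_1|^{k-2}\sum_{j\ne 1}|v_j|^2$ can only be controlled by $\frac{k\Lambda(N-1)}{N}\langle F^N_t, |v_1|^k\rangle$ via Young's inequality combined with exchangeability, which yields a \emph{positive} coefficient $\tfrac{k(k-2)\Lambda(N-1)}{N}$ in front of $\langle F^N_t, M^N_k\rangle$ and therefore only exponential-in-time control; it is precisely the pointwise energy bound supplied by Lemma~\ref{lem:A1i} that turns this cross-term into something dominated by $|v_1|^k$ and lower-order constants, restoring a strict spectral gap uniform in $N$.
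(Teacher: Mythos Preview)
Your proof is correct but takes a genuinely different route from the paper. The paper's proof is a two-line reduction: it invokes the corresponding moment bound for the Boltzmann $N$-particle system with grazing kernel $b_\eps$ (from \cite[Lemma~5.3]{MMchaos}), notes that the constant is independent of $\eps$, and passes to the grazing limit $\eps\to 0$. Your approach stays entirely on the Landau side: the explicit computation of $G^N_L|v_1|^k$, the Maxwellian cancellation of the $v_1\cdot v_j$ cross-terms between drift and diffusion, and the identity $|v_1-v_j|^2|v_1|^2 - ((v_1-v_j)\cdot v_1)^2 = |v_1|^2|v_j|^2 - (v_1\cdot v_j)^2$ are all correct and lead to the clean pointwise inequality you state.

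The paper's route is shorter once the Boltzmann machinery is already in hand; your route is self-contained and makes the mechanism transparent, in particular the role of the pointwise energy constraint. Two caveats you already flag: first, your argument uses the support condition $\supp F^N_0\subset\{M^N_2\le\EE_0\}$ from Lemma~\ref{lem:A1i}, which is not part of the hypotheses of Lemma~\ref{lem:A1ii} as stated, though it holds everywhere the lemma is invoked in the paper; second, the constant you obtain is $\max\{C_k,\,2K_k/(k\Lambda(d-1))\}$ rather than $C_k$ itself, which is the form actually needed for Assumption~{\bf (A1)(ii)} and therefore harmless. Your diagnosis in the final paragraph---that without the pointwise energy bound the cross-term $|v_1|^{k-2}\sum_{j\ne 1}|v_j|^2$ only yields exponential growth via Young plus exchangeability---is exactly right and explains why the direct approach cannot dispense with Lemma~\ref{lem:A1i}.
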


\begin{proof}[Proof of Lemma \ref{lem:A1ii}]
Consider $F^{N,\eps}_t$ the solution of the Boltzmann $N$-particle system \eqref{eq:MasterBoltzmann}-\eqref{eq:BoltzmannGenerator} with grazing collisions \eqref{eq:grazing}. Then from \cite[Lemma 5.3]{MMchaos}, we obtain the desired result for $F^{N,\eps}_t$ with a constant independent of $\eps$. We conclude passing to the grazing collisions limit $\eps\to0$.
\end{proof}

Consider the constraint function ${\bf m}_{\GG_1} : \R^d\to \R_+ \times \R^d$, ${\bf m}_{\GG_1}(v) = (|v|^2,v)$ with the set of constraints $\RRR_{\GG_1} : \{ (r,\bar r) \in \R_+\times \R^d; \; |\bar r|^2 \le r \le \EE_0\}$. We have then $\E_N = \{ V \in \R^{dN}; \, M^N_2(V) \le \EE_0  \}$ and Lemma \ref{lem:A1i} proves Assumption {\bf (A1i)}. Moreover, Lemma \ref{lem:A1ii} proves {\bf (A1ii)} with the weight function $m_{\GG_1}(v) := \langle v \rangle^6 = (1+|v|^2)^3$.

\subsection{Proof of assumption (A2)}\label{ssec:A2} 
Let us define the spaces of probabilities (and the corresponding bounded, constrained and increments subsets, see Definition~\ref{def:P_G})
$$
\PPP_{\GG_1} := \{ f\in \PPP(\R^d);\; \langle  f , m_{\GG_1} \rangle = M_6(f) < \infty \},
$$
and, for ${\bf r} \in \RRR_{\GG_1}$, more precisely ${\bf r}=(r,\bar r)=(r, \bar r_1,\dots, \bar r_d)$, the constrained space
$$
\PPP_{\GG_1,{\bf r}} := \{ f \in \PPP_{\GG_1}; \; \ps{f}{|v|^2}=r, \, \ps{f}{v_\alpha}= \bar r_\alpha \text{ for } \alpha=1,\dots,d \}.
$$
We define then for some $a\in(0,\infty)$ the bounded set
$$
\BB\PPP_{\GG_1,a} := \{ f\in \PPP_{\GG_1} ; \; M_6(f) \le a\},
$$
and, for any ${\bf r} \in \RRR_{\GG_1}$, the bounded constrained set
$$
\BB\PPP_{\GG_1,a,{\bf r}} :=  \{f\in \BB\PPP_{\GG_1,a} ; \; \ps{f}{|v|^2}=r, \, \ps{f}{v_\alpha}=\bar r_\alpha \text{ for } \alpha=1,\dots,d   \}.
$$
Moreover we define the vector space
$$
\GG_1 := \{ \rho \in M^1_6(\R^d); \; \langle \rho , 1 \rangle = \langle \rho , v_\alpha \rangle = \langle \rho , |v|^2 \rangle = 0 \text{ for } \alpha = 1,\dots, d    \}
$$
endowed with the (general) Fourier-based norm $\| \cdot \|_{\GG_1} = \Nt \cdot \Nt_2$ defined in \eqref{eq:general-toscani} (see Definition~\ref{def:general-toscani}).

Finally, we endow these probability measure spaces with the distance $\dist_{\GG_1}$ associated with the norm $\| \cdot \|_{\GG_1} = \Nt \cdot \Nt_2$. Remark that for any $f,g \in \PPP_{\GG_1, \bf r}$, for some $\bf r \in \bf R_{\GG_1}$, it holds $\| f-g \|_{\GG_1} = |f-g|_2$, where $| \cdot |_2$ is the usual Fourier-based norm defined in \eqref{eq:toscani} (see Definition~\ref{def:toscani}), because $f$ and $g$ have same momentum.

\medskip

Let us state some know results. Concerning the Cauchy theory for the spatially homogeneous Landau equation for Maxwellian molecules \eqref{eq:landau}, we refer to Villani \cite{Vi1} for a $L^1(\R^d)$-theory and to Gu\'erin \cite{Gue1,Gue2} for a $\PPP(\R^d)$-theory. More precisely, if $ f_0 \in \PPP_2(\R^d)$ then there exists a probability flow solution $ (f_t)_{t \ge 0}$ to \eqref{eq:landau}, where $f_t \in \PPP_2 (\R^d) $, that conserves momentum and energy from \cite{Gue1}, moreover this solution is unique from \cite{Gue2} or from Lemma~\ref{lem:A2i} below. Furthermore we also have uniform in time propagation of moment bounds from \cite{Vi1}. Therefore, it follows that there exists $\bar a \in (0,\infty)$ such that for any $a \in [\bar a,\infty)$ and any $\bf r \in \bf R_{\GG_1}$, the nonlinear semigroup $S^\infty_t$ verifies $S^\infty_t : \BB \PPP_{\GG_1, a, \bf r} \to \BB\PPP_{\GG_1, a, \bf r}$.

\Black

We investigate now the H\"older regularity of the semigroup in the following lemma.

\begin{lemma}\label{lem:A2i}
Let $f_0,g_0 \in  \PPP_2(\R^d)$ with same momentum, i.e.\ $\langle f_0,v \rangle = \langle g_0,v \rangle$, and consider the solutions $(f_t)_{t \ge 0}$ and $(g_t)_{t\ge 0}$ of Landau equation for Maxwellian molecules \eqref{eq:landau}-\eqref{eq:landau2} associated to $f_0$ and $g_0$, respectively. Then
\begin{equation}\label{eq:A2i}
\sup_{t\geq 0} {\abs{f_t-g_t}}_{2} \le {\abs{f_0-g_0}}_{2}.
\end{equation}
\end{lemma}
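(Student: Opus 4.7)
The plan is to establish the estimate by working directly at the Fourier level, exploiting the very special algebraic structure of the Landau operator for Maxwellian molecules. As an alternative (and fall-back) route, one can obtain \eqref{eq:A2i} by passing to the grazing collisions limit from the analogous Toscani--Villani estimate for the Boltzmann equation with Maxwellian kernel $b_\eps$ satisfying \eqref{eq:grazing}, which is known to hold with a constant independent of $\eps$; combined with Villani's grazing collisions convergence \cite{Vi2} and the lower semicontinuity of $|\cdot|_2$ under pointwise convergence of Fourier transforms, this yields the bound on $f_t-g_t$. I will favour the direct approach below.

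First, I would plug the test function $\varphi(v) = e^{-iv\cdot\xi}$ into the weak form \eqref{landau-faible} and use the explicit expressions $a_{\alpha\beta}(z) = \Lambda(|z|^2 \delta_{\alpha\beta}-z_\alpha z_\beta)$ and $b_\alpha(z) = -\Lambda(d-1)z_\alpha$ from \eqref{fonction-a}--\eqref{fonctions-bc}. Since $\partial_\alpha \varphi = -i\xi_\alpha \varphi$ and $\partial_{\alpha\beta}\varphi = -\xi_\alpha \xi_\beta \varphi$, one obtains after elementary manipulations a closed Bobylev-type identity for $\widehat{Q_L(g,f)}(\xi)$ that is quadratic in $(\hat g,\hat f)$ and involves only $\hat f(\xi)$, $\hat g(\xi)$ and their values plus first and second derivatives at $\xi=0$ (i.e.\ the mass, momentum and energy of $f$ and $g$).

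Next, set $h_t := f_t-g_t$ and use the bilinearity of $Q_L$ to write
$$
\partial_t h_t = Q_L(f_t,h_t) + Q_L(h_t,g_t).
$$
Applying the identity from the previous step yields an evolution equation for $\hat h_t(\xi)$ whose coefficients are determined by the conserved moments of $f_t$ and $g_t$. Because $f_0$ and $g_0$ share the same mass and momentum, conservation of these quantities (Lemma \ref{lem:A1i0}-type property for the limit equation, which in fact holds in the continuum from \cite{Vi1,Gue1}) gives $\hat h_t(0) = 0$ and $\nabla_\xi \hat h_t(0) = 0$, so that $u_t(\xi) := \hat h_t(\xi)/|\xi|^2$ is well defined and bounded for each $t$ with $\|u_t\|_{L^\infty_\xi} = |h_t|_2$.

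Finally, dividing the equation for $\hat h_t$ by $|\xi|^2$ and extracting the diagonal contribution, I expect to arrive at a pointwise inequality of the form
$$
\frac{d}{dt}|u_t(\xi)| \le \alpha(\xi)\bigl(\|u_t\|_{L^\infty_\xi}-|u_t(\xi)|\bigr)
$$
for some nonnegative weight $\alpha(\xi)$, from which a standard maximum-principle/Gronwall argument yields $\sup_\xi |u_t(\xi)| \le \sup_\xi |u_0(\xi)|$, i.e.\ $|f_t-g_t|_2 \le |f_0-g_0|_2$. The main obstacle will be the algebraic step: verifying that after the Fourier computation of $\widehat{Q_L}$ the off-diagonal terms are genuinely controlled by $|\xi|^2\|u_t\|_{L^\infty_\xi}$ rather than producing a loss of weight in $\xi$. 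This is where the scale invariance of Maxwellian molecules (the quadratic factor $|z|^2$ exactly compensating the $|\xi|^2$ coming from two derivatives of $\varphi$) is essential, and it is precisely this cancellation that fails for hard or soft potentials.
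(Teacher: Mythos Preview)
Your fall-back route --- prove the Toscani--Villani contraction for Boltzmann with kernel $b_\eps$ satisfying \eqref{eq:grazing} (with constants independent of $\eps$) and pass to the grazing limit using \cite{Vi2} --- is exactly what the paper does, and it works.

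Your preferred direct approach, however, has a real gap. The claim that the Fourier transform of $Q_L(g,f)$ for Maxwell molecules is a closed Bobylev-type expression involving only $\hat f(\xi)$, $\hat g(\xi)$ and their derivatives \emph{at $\xi=0$} is not correct. The matrix $a_{\alpha\beta}(z)=\Lambda(|z|^2\delta_{\alpha\beta}-z_\alpha z_\beta)$ is quadratic in $z=v-v_*$; when you plug $\varphi(v)=e^{-iv\cdot\xi}$ into \eqref{landau-faible}, the factors $|v-v_*|^2$ and $(v-v_*)_\alpha(v-v_*)_\beta$ survive under the $v$-integral and produce \emph{second-order $\xi$-derivatives of $\hat f$ at the running point $\xi$}, not merely moment data. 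Concretely, $\int |v|^2 f(v)e^{-iv\cdot\xi}\,dv=-\Delta_\xi\hat f(\xi)$ and $\int v_\alpha v_\beta f(v)e^{-iv\cdot\xi}\,dv=-\partial_{\xi_\alpha}\partial_{\xi_\beta}\hat f(\xi)$. So what you obtain is a genuine second-order linear PDE in $\xi$ for $\hat h_t(\xi)$, not an ODE at each fixed $\xi$ coupled only through $\|u_t\|_{L^\infty_\xi}$. The pointwise inequality $\frac{d}{dt}|u_t(\xi)|\le \alpha(\xi)(\|u_t\|_{L^\infty_\xi}-|u_t(\xi)|)$ that you expect therefore does not follow from the computation you outline; controlling the derivative terms would require a maximum-principle argument for the $\xi$-PDE, which is a different (and more delicate) mechanism than the Bobylev algebra you invoke.

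This is precisely why the paper detours through Boltzmann: Bobylev's formula for Maxwellian Boltzmann is algebraic (it shifts the argument of $\hat f$ rather than differentiating it), the cutoff/Gronwall argument is clean there, and the constants are uniform in $\eps$, so the grazing limit recovers \eqref{eq:A2i} for Landau. Use your fall-back.
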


\begin{rem}
Let us mention that this result can be found in \cite{Vi1} proving uniqueness for the Landau equation for Maxwellian molecules. There the author indicates that we can prove it using the known result for the Boltzmann equation for Maxwellian molecules from \cite{ToVi} and then passing to the limit of grazing collisions.  
\end{rem}

\begin{proof}[Proof of Lemma \ref{lem:A2i}]
Let us split the prove into two steps. First we prove the lemma for the Boltzmann equation then we recover the result for Landau equation passing to the limit of grazing collisions. 

\medskip
\noindent
{\it Step 1.} 
We shall prove first the desired result for the Boltzmann equation with true Maxwellian molecules. This result is proved in \cite{ToVi,MMchaos}, but we write it for completeness and because we want to pass to the limit of grazing collisions.

Consider the solutions $f_t^\eps$ and $g_t^\eps$ of Boltzmann equation for Maxwellian molecules \eqref{eq:Boltzmann}-\eqref{eq:Boltzmann2} with initial data $f_0$ and $g_0$, respectively, and with collision kernel $b_\eps$ satisfying \eqref{eq:grazing}.
Denote $d^\eps:=g^\eps-f^\eps$ and $s^\eps:=g^\eps+f^\eps$, then the equation satisfied by $d^\eps$ is
\begin{equation*}\label{}
\partial_t d^\eps = \frac{1}{2}\Big[ Q_{B,\eps}(s^\eps,d^\eps) + Q_{B,\eps}(d^\eps,s^\eps) \Big].
\end{equation*}
Performing the Fourier transform (see \cite{Bobylev}) and denoting $D^\eps = \hat d^\eps$, $S^\eps = \hat s^\eps$, we have
\begin{equation*}\label{}
\partial_t D^\eps(\xi) =  \int_{\Sp^{d-1}} b_{\eps}(\sigma\cdot \hat \xi)
\left[ \frac{D^\eps(\xi^+) S^\eps(\xi^-)}{2} + \frac{D^\eps(\xi^-) S^\eps(\xi^+)}{2} - D^\eps(\xi)   \right]d\sigma
\end{equation*}
where $\xi^+ = \frac{\xi + \abs{\xi}\sigma}{2}$, $\xi^- = \frac{\xi - \abs{\xi}\sigma}{2}$ and $\hat \xi = \xi/\abs{\xi}$.

We recall that $b_{\eps}$ is not integrable so we perform the following cut-off, which will be relaxed in the end,
\begin{equation}\label{eq:cut-off}
\int_{\Sp^{d-1}} b_{\eps}^K(\sigma\cdot\hat \xi)d\sigma=K,\qquad b_{\eps}^K=b_{\eps}\mathbf 1_{\abs{\theta}\geq \delta(K)},
\end{equation}
for some function $\delta$ such that $\delta(K)\to 0$ as $K\to+\infty$, so that $b_{\eps} = b_{\eps}^K + b_{\eps}^C$. In \cite{ToVi,MMchaos}, we observe that the remainder term
$$
R_{\eps}^K (\xi ) :=  \int_{\Sp^{d-1}} b_{\eps}^C (\sigma\cdot \hat \xi)
\left[ \frac{D^\eps(\xi^+) S^\eps(\xi^-)}{2} + \frac{D^\eps(\xi^-) S^\eps(\xi^+)}{2} - D^\eps(\xi)   \right]d\sigma
$$
verifies, for any $\xi\in \R^d$, $|R_\eps^K(\xi)| \le r_\eps^K |\xi|^2$, where $r_\eps^K \to 0$ as $K\to\infty$, and $r_\eps^K$ depends on the second order moments of $d^\eps$ and $s^\eps$. Indeed, 
using that $D^\eps(0)=\partial_{\xi_i} D^\eps (0)=0$ for all $i \in \{ 1 , \dots, d \}$, $S(0)=2$, and the fact that $\sup_{|\eta|\le |\xi|}|\partial_{\xi_i}\partial_{j} D^\eps(\eta)|$ and $\sup_{|\eta|\le |\xi|}|\partial_{\xi_i}\partial_{j} D^\eps(\eta)|$ are bounded thanks to the bounds on the second order moments of $d^\eps$ and $s^\eps$, there holds
$$
\bal
&|D^\eps(\xi^+) S^\eps(\xi^-) + D^\eps(\xi^-) S^\eps(\xi^+) - 2 D^\eps(\xi)| \\
&\qquad \le |S^\eps(\xi^-)| |D^\eps(\xi^+) - D^\eps(\xi)| + |D^\eps(\xi)||S^\eps(\xi^-) - S^\eps(0)|
+ |D^\eps(\xi^-)| |S^\eps(\xi^+)| \\
&\qquad \le C |\xi|^2 (1-\cos\theta)^{1/2},
\eal
$$
and we conclude since $b_{\eps}^C (\cos\theta)(1-\cos\theta)^{1/2}$ is integrable.

Using that $\norm{S^\eps}_{\infty}\le 2$, we have
\begin{equation*}\label{}
\frac{d}{dt}\frac{ \abs{D^\eps(\xi)}}{\abs{\xi}^2} + K\frac{ \abs{D^\eps(\xi)}}{\abs{\xi}^2} \leq \sup_{\xi\in\R^d}\frac{ \abs{D^\eps(\xi)}}{\abs{\xi}^2} 
\left( \sup_{\xi\in\R^d}  \int_{\Sp^{d-1}} b^K_{\eps}(\sigma\cdot \hat \xi)
\left( \abs{\hat\xi^+}^2 + \abs{\hat\xi^-}^2 \right) d\sigma\right) + r_\eps^K
\end{equation*}
with
$$
\abs{\hat\xi^+}^2 = \frac{1}{2}\left( 1 +  \sigma\cdot \hat \xi  \right),\qquad
\abs{\hat\xi^-}^2 = \frac{1}{2}\left( 1 - \sigma\cdot \hat \xi  \right).
$$
One obtains
\begin{equation*}\label{}
\frac{d}{dt}\frac{ \abs{D^\eps(\xi)}}{\abs{\xi}^2} + K\frac{ \abs{D^\eps(\xi)}}{\abs{\xi}^2} \leq K \sup_{\xi\in\R^d}\frac{ \abs{D^\eps(\xi)}}{\abs{\xi}^2} 
+ r_\eps^K,
\end{equation*}
and by a Gronwall's lemma one deduces
\begin{equation*}\label{}
\sup_{\xi\in\R^d}\frac{ \abs{D^\eps_t(\xi)}}{\abs{\xi}^2} \leq  \sup_{\xi\in\R^d}\frac{ \abs{D^\eps_0(\xi)}}{\abs{\xi}^2} + t \,r_\eps^K.
\end{equation*}
Relaxing the cut-off $K\to\infty$ one proves
\begin{equation}\label{eq:d-eps}
|f_t^\eps - g_t^\eps|_2 \le |f_0-g_0|.
\end{equation}

\medskip
\noindent
{\it Step 2.} 
Since \eqref{eq:d-eps} does not depend on $\eps$ and the solution of the Boltzmann equation $f_t^\eps$ converges towards the solution of the Landau equation $f_t$ (see \cite{Vi2}) when $\eps\to0$, we obtain the desired result. 

\end{proof}

Therefore the Landau semigroup $S^{\infty}_t$ is $C^{0,1}$ from $\BB\PPP_{\GG_1, a, \bf r}$ to $\BB\PPP_{\GG_1, a, \bf r}$ and Assumption~{\bf (A2i)} is proved.

\medskip

To prove {\bf (A2ii)} we use \cite[Lemma 5.5]{MMchaos}, valid for the Boltzmann operator with grazing collisions $Q_{B,\eps}$, which says that there exists $C>0$ and $\delta\in(0,1]$ such that for any $f,g \in \BB\PPP_{\GG_1,a,{\bf r}}$ we have
$$
| Q_{B,\eps}(f,f) - Q_{B,\eps}(g,g) |_2 \le C |f-g|_2^\delta,
$$
with a constant $C$ that does not depend on $\eps$.
Finally, passing to the limit of grazing collisions $\epsilon \to 0$, we have that $Q_{B,\eps}~\to~Q_L$ (see e.g.\ \cite{Des,Vi2}). We prove then Assumption~{\bf (A2ii)} also for the Landau equation.

\subsection{Proof of assumption (A3)}\label{ssec:A3}

Let $\Lambda_1(f) := \langle f, m'_{\GG_1} \rangle$ with the weight function $m'_{\GG_1}(v) := \langle v \rangle^4$, where we recall that $m_{\GG_1}=\langle v \rangle^6$, and then consider the generator $G^N$ of the Landau master equation \eqref{Lmaster2}. 

Then we have the following lemma, which proves {\bf (A3)}.

\begin{lemma}\label{lem:A3}
For all $\Phi\in \bigcap_{{\bf r} \in \RRR_{\GG_1}} C^{2,\eta}_{\Lambda_1}( \PPP_{\GG_1,{\bf r}};\R)$ there exists $C>0$ such that
\begin{equation}\label{eq:A3}
\Norm{\left(M^N_{m_{\GG_1}}\right)^{-1}\left(G^N\pi^N_C - \pi^N_C G^\infty \right)\Phi}_{L^\infty(\mathbb E_N)} \leq \frac{C}{N} \sup_{{\bf r}\in \RRR_{\GG_1}}
\left[\Phi \right]_{C^{2,0}_{\Lambda_1}( \PPP_{\GG_1,{\bf r}};\R)}.
\end{equation}
\end{lemma}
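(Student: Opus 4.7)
The strategy is to compute $G^N\pi^N_C\Phi(V)$ directly by the chain rule on $\Phi(\mu^N_V)$, recognise the principal terms as $\pi^N_C G^\infty\Phi(V)$, and then estimate the $1/N$ remainder by exploiting the degeneracy $a(z)z=0$ of the Landau collision matrix.

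\textbf{Computation.} Write $\phi^1_f, \phi^2_f$ for the kernels representing $D\Phi[f]$ and the symmetric $D^2\Phi[f]$. Differentiating $\Phi(\mu^N_V)$ and using the distributional identity $\partial_{v_{i,\alpha}}\mu^N_V = \frac{1}{N}\partial_\alpha \delta_{v_i}$, one obtains $\nabla_i \pi^N_C\Phi = \frac{1}{N}\nabla\phi^1_{\mu^N_V}(v_i)$ and $\nabla^2_{ij}\pi^N_C\Phi = \frac{\delta_{ij}}{N}\nabla^2\phi^1_{\mu^N_V}(v_i) + \frac{1}{N^2}\partial_v\partial_w\phi^2_{\mu^N_V}(v_i,v_j)$. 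Plugging these into the Landau generator \eqref{eq:generateurLandau} and using the weak form \eqref{landau-faible} of $Q_L$ with empirical measure $\mu^N_V$, the $1/N$-parts telescope exactly to $\pi^N_C G^\infty\Phi(V)$. The residual terms of order $1/N^2$ give
$$
(G^N\pi^N_C - \pi^N_C G^\infty)\Phi(V) = \frac{1}{2N^3}\sum_{i,j=1}^N a(v_i-v_j):\Lambda[\partial_v\partial_w\phi^2_{\mu^N_V}](v_i,v_j) =: A_3,
$$
where $\Lambda H(v_i,v_j) := H(v_i,v_i) + H(v_j,v_j) - H(v_i,v_j) - H(v_j,v_i)$ is a discrete second cross-difference.

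\textbf{Rewriting via $D^2\Phi$ and the key Fourier estimate.} A direct computation shows that, for $\tilde h^\alpha_{ij} := \partial_\alpha(\delta_{v_i}-\delta_{v_j})$ (a distribution with vanishing moments of order $\le 1$), one has $D^2\Phi[\mu^N_V](\tilde h^\alpha_{ij},\tilde h^\beta_{ij}) = \Lambda[\partial^v_\alpha\partial^w_\beta\phi^2_{\mu^N_V}](v_i,v_j)$. Factoring $a(z) = \sigma(z)\sigma(z)^T$ with the natural square root $\sigma(z) = |z|\,\Pi(z)$, $\Pi(z) = I - zz^T/|z|^2$, and using bilinearity, yields
$$
A_3 = \frac{1}{2N^3}\sum_{i,j,k} D^2\Phi[\mu^N_V]\bigl(\tilde h^{(k)}_{ij},\tilde h^{(k)}_{ij}\bigr),\qquad \tilde h^{(k)}_{ij} := \sum_\alpha \sigma_{\alpha k}(v_i-v_j)\,\tilde h^\alpha_{ij}.
$$
The crucial estimate is $\|\tilde h^{(k)}_{ij}\|_{\GG_1} \le C\,|v_i-v_j|^2$. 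Since $\tilde h^{(k)}_{ij}$ has zero moments of order $\le 1$, the norm reduces to $|\tilde h^{(k)}_{ij}|_2 = \sup_\xi |\hat{\tilde h}^{(k)}_{ij}(\xi)|/|\xi|^2$, and the Fourier transform equals $i[\sigma(v_i-v_j)\xi]_k (e^{-iv_i\cdot\xi} - e^{-iv_j\cdot\xi})$. Combining $|\sigma(v_i-v_j)\xi|\le |v_i-v_j||\xi|$ with $|e^{-iv_i\xi} - e^{-iv_j\xi}| \le |(v_i-v_j)\cdot\xi| \le |v_i-v_j||\xi|$ gives $|\hat{\tilde h}^{(k)}_{ij}(\xi)| \le |v_i-v_j|^2|\xi|^2$, hence the claim. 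The main obstacle is precisely this sharp $|v_i-v_j|^2$ rate: the naive Lipschitz bound would only yield $|v_i-v_j|$, which fails to close the final moment estimate. The extra factor is produced by the Landau orthogonality $\sigma(v_i-v_j)(v_i-v_j)=0$, reflecting the conservative diffusive structure of the collision matrix.

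\textbf{Conclusion.} The $C^{2,0}_{\Lambda_1}$ regularity gives $|D^2\Phi[\mu^N_V](\tilde h^{(k)}_{ij},\tilde h^{(k)}_{ij})| \le [\Phi]_{C^{2,0}_{\Lambda_1}}\,\Lambda_1(\mu^N_V)\,|v_i-v_j|^{2(1+\eta_1)}$. Summing via $\sum_{i,j}|v_i-v_j|^p \le C N^2 \langle \mu^N_V, |v|^p\rangle$ and invoking the power-mean inequality $\langle\mu^N_V,\langle v\rangle^a\rangle\,\langle\mu^N_V,\langle v\rangle^b\rangle \le \langle\mu^N_V,\langle v\rangle^{a+b}\rangle$ (valid for $\mu^N_V\in\PPP$ since $\langle v\rangle\ge1$) lets one control $\Lambda_1(\mu^N_V)\cdot\langle\mu^N_V,|v|^{2(1+\eta_1)}\rangle$ by $M^N_{m_{\GG_1}}(V)$ with the choice $m_{\GG_1}=\langle v\rangle^6$, $m'_{\GG_1}=\langle v\rangle^4$. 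This yields $|A_3| \le C N^{-1}\,M^N_{m_{\GG_1}}(V)\,[\Phi]_{C^{2,0}_{\Lambda_1}}$, which is the desired bound.
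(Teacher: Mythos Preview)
Your argument follows the same chain-rule skeleton as the paper (compute $G^N\Phi(\mu^N_V)$, recognise the first-order part as $\pi^N_C G^\infty\Phi$, then bound the $D^2\Phi$ remainder), but the treatment of the remainder is genuinely different and in one respect more careful. The paper does not factor $a=\sigma\sigma^T$; it simply uses $|a_{\alpha\beta}(v_i-v_j)|\le|v_i-v_j|^2$ and bounds each term $|D^2\Phi[\mu^N_V](\partial_\alpha\delta_{v_i},\partial_\beta\delta_{v_j})|$ by $[\Phi]_{C^{2,0}_{\Lambda_1}}\Lambda_1(\mu^N_V)\|\partial_\alpha\delta_{v_i}\|_{\GG_1}^{1+\eta_1}$, absorbing the last norm into a constant. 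Since $\partial_\alpha\delta_{v_i}$ has nonvanishing first moment it does not literally lie in the constrained space $\GG_1$, so that step is somewhat formal. Your $\sigma$-factoring resolves this: the combination $\tilde h^{(k)}_{ij}$ has zero mass, momentum \emph{and} energy (the last precisely because $\sigma(z)z=0$), hence genuinely belongs to $\GG_1$, and the $C^{2,0}_{\Lambda_1}$ bound applies without caveat. What the paper's cruder route buys is a cleaner final step: it lands directly on $N^{-2}\sum_{i,j}|v_i-v_j|^2\le 2M^N_2(V)\le 2\EE_0$ via the support constraint $V\in\E_N$, and then simply uses $\Lambda_1\le CM^N_{m_{\GG_1}}$.

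There is one small slip in your closure. The power-mean inequality with $a=4$ and $b=2(1+\eta_1)$ only gives $\Lambda_1(\mu^N_V)\,\langle\mu^N_V,\langle v\rangle^{2(1+\eta_1)}\rangle\le\langle\mu^N_V,\langle v\rangle^{6+2\eta_1}\rangle$, which is \emph{not} dominated by $M^N_{\langle v\rangle^6}$ when $\eta_1>0$. You must also invoke the support condition $V\in\E_N$, as the paper does: since $2(1+\eta_1)\le4$ one has $\langle\mu^N_V,|v|^{2(1+\eta_1)}\rangle\le M^N_{\langle v\rangle^4}=\Lambda_1(\mu^N_V)$, and then H\"older gives $\Lambda_1^2=(M^N_{\langle v\rangle^4})^2\le M^N_{\langle v\rangle^2}\,M^N_{\langle v\rangle^6}\le(1+\EE_0)\,M^N_{m_{\GG_1}}(V)$. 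With this adjustment your proof is complete.
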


\begin{proof}[Proof of Lemma \ref{lem:A3}]
The application $\R^{dN} \to \PPP_{\GG_1}$, $V = (v_1, \dots, v_N)\mapsto \mu^{N}_{V}$ is of class $C^{2,1}$ with (see \cite[Lemma 7.4]{MMWchaos})
\begin{equation}
\partial_{v_{i,\alpha}} \mu^N_V = \frac{1}{N}\partial_{\alpha}\delta_{v_i},
\quad
\partial^2_{v_{i,\alpha},v_{i,\beta}} \mu^N_V  = \frac{1}{N}\partial^2_{\alpha\beta}\delta_{v_i} \\
\end{equation}
and for $i\neq j,\; \partial^2_{v_{i,\alpha},v_{j,\beta}} \mu^N_V=0$.
Let $\Phi\in C^{2,\eta}_{\Lambda_1}( \PPP_{\GG_1};\R)$, so the application $\R^{dN} \to \mathbb R$, $V\mapsto \Phi(\mu^{N}_{V})$ is also $C^{2,\eta}$. Indeed, let $\phi = D\Phi[\mu^{N}_{V}] \in \GG_1' $ and we have
$$
\begin{aligned}
\partial_{v_{i,\alpha}}(\Phi(\mu^{N}_{V})) &= 
\left\langle D\Phi[\mu^{N}_{V}] , \partial_{v_{i,\alpha}}\mu^{N}_{V} \right\rangle 
= \left\langle D\Phi[\mu^{N}_{V}] ,  \frac{1}{N}\partial_{v_{i,\alpha}}\delta_{v_i} \right\rangle=
\frac{1}{N}\partial_{\alpha}\phi(v_i) \\
\partial^2_{v_{i,\alpha},v_{i,\beta}}\Phi(\mu^{N}_{V}) &=
\left\langle D\Phi[\mu^{N}_{V}] , \frac{1}{N}
\partial^2_{v_{i,\alpha},v_{i,\beta}}\delta_{v_i} \right\rangle +
 D^2\Phi[\mu^{N}_{V}]\left( \frac{1}{N}\partial_{\alpha}\delta_{v_i},
  \frac{1}{N}\partial_{\beta}\delta_{v_i} \right) \\
&= \frac{1}{N}\partial^2_{\alpha,\beta}\phi(v_i) + 
\frac{1}{N^2} D^2\Phi[\mu^{N}_{V}]\left( \partial_{\alpha}\delta_{v_i},\partial_{\beta}\delta_{v_i}\right).
\end{aligned}
$$
We compute, for any $V \in \E_N = \{ V \in \R^{dN}; \, M^N_2(V) \le \EE_0  \}$,
$$
\begin{aligned}
(G^N \pi^N_C \Phi)(V)&=G^N \Phi(\mu^{N}_{V})\\
&= \frac{1}{N}\sum_{i,j=1}^{N}\sum_{\alpha=1}^{d} b_{\alpha}(v_i-v_j)
\left[ \partial_{v_{i,\alpha}}(\Phi(\mu^{N}_{V})) 
     - \partial_{v_{j,\alpha}}(\Phi(\mu^{N}_{V}))    \right] \\
&\quad
+ \frac{1}{2N}\sum_{i,j=1}^{N}\sum_{\alpha,\beta=1}^{d} 
   a_{\alpha\beta}(v_i-v_j)
\left[ \partial^2_{v_{i,\alpha},v_{i,\beta}}(\Phi(\mu^{N}_{V}) )
     + \partial^2_{v_{j,\alpha},v_{j,\beta}}(\Phi(\mu^{N}_{V}) )\right.\\ 
&\quad\quad\quad  \left.
     -  \partial^2_{v_{i,\alpha},v_{j,\beta}}(\Phi(\mu^{N}_{V}))
     -  \partial^2_{v_{j,\alpha},v_{i,\beta}}(\Phi(\mu^{N}_{V})) \right] \\
&= \frac{1}{N}\sum_{i,j=1}^{N}\sum_{\alpha=1}^{d} b_{\alpha}(v_i-v_j)
\left[ \frac{1}{N}\partial_{\alpha}\phi(v_i) -\frac{1}{N} \partial_{\alpha}\phi(v_j) \right] \\
&\quad + \frac{1}{2N}\sum_{i,j=1}^{N} \sum_{\alpha,\beta=1}^{d}
   a_{\alpha\beta}(v_i-v_j)
\left[ \frac{1}{N}\partial^2_{\alpha,\beta}\phi(v_i) + \frac{1}{N}\partial^2_{\alpha,\beta}\phi(v_j)    \right]  \quad(=:I_1)\\
&\quad +\frac{1}{2N}\sum_{i,j=1}^{N}\sum_{\alpha,\beta=1}^{d} 
   a_{\alpha\beta}(v_i-v_j)
\left[ \frac{1}{N^2} D^2\Phi[\mu^{N}_{V}]\left( \partial_{v_{i,\alpha}}\delta_{v_i},\partial_{v_{i,\beta}}\delta_{v_i}\right) \right. \\
&\quad\quad\quad  + \left.
\frac{1}{N^2} D^2\Phi[\mu^{N}_{V}]\left( \partial_{v_{j,\alpha}}\delta_{v_j},\partial_{v_{j,\beta}}\delta_{v_j}\right) 
-2 \frac{1}{N^2} D^2\Phi[\mu^{N}_{V}]\left( \partial_{v_{i,\alpha}}\delta_{v_i},\partial_{v_{j,\beta}}\delta_{v_j}\right)  \right] \quad(=:I_2).\\
\end{aligned}
$$
For the first term, using the empirical measure, we can write
$$
\begin{aligned}
I_1 =& \int\!\!\int \sum_{\alpha=1}^{d}b_{\alpha}(v-v_*)
\left[  \partial_{\alpha}\phi(v) - \partial_{\alpha}\phi(v_*)\right]
\mu_{V}^{N}(dv)\mu_{V}^{N}(dv_*) \\
&+ \frac{1}{2}\int\!\!\int \sum_{\alpha,\beta=1}^{d}
a_{\alpha\beta}(v-v_*)\left[ \partial^2_{\alpha,\beta}\phi(v) + \partial^2_{\alpha,\beta}\phi(v_*)    \right]\mu_{V}^{N}(dv)\mu_{V}^{N}(dv_*)\\
=& \ps{Q_L(\mu_{V}^{N},\mu_{V}^{N})}{\phi} = 
\ps{Q_L(\mu_{V}^{N},\mu_{V}^{N})}{D\Phi[\mu^{N}_{V}]} = 
 (G^{\infty} \Phi)(\mu^N_V)=
(\pi^N_C G^{\infty} \Phi)(V),
\end{aligned}
$$
thanks to Lemma \ref{lem:G_infty}.
For the second one, using that $|a_{\alpha\beta}(v_i-v_j)| \le |v_i-v_j|^2$ and also 
$|D^2\Phi[\mu^{N}_{V}]\left( \partial_{v_{i,\alpha}}\delta_{v_i},\partial_{v_{i,\beta}}\delta_{v_i}\right)| \le [\Phi ]_{C^{2,0}_{\Lambda_1}( \PPP_{\GG_1,{\bf r} };\R)} \, \Lambda_1(\mu^N_V) \, \|\partial_{v_{i,\alpha}}\delta_{v_i} \|_{\GG_1}^{1+\eta}$, we deduce that there exists $C>0$ such that
$$
\begin{aligned}
|I_2| &\leq \frac{C}{N} \, \left[\Phi \right]_{C^{2,0}_{\Lambda_1}( \PPP_{\GG_1,{\bf r} };\R)} \, \Lambda_1(\mu^N_V)
\frac{1}{N^2}\sum_{i,j=1}^{N}|v_i-v_j|^{2}.
\end{aligned}
$$
Since $\Lambda_1(\mu^N_V) = M^N_{m'_{\GG_1}} (V) \le  C \, M^N_{m_{\GG_1}} (V)$ and $M^N_2(V) \le \EE_0$, we conclude that
$$
\begin{aligned}
|I_2| 
&\leq \frac{C \EE_0}{N}  \, \left[\Phi \right]_{C^{2,0}_{\Lambda_1}( \PPP_{\GG_1,{\bf r} };\R)} \,
M^N_{m_{\GG_1}}(V),
\end{aligned}
$$
and therefore we prove \eqref{eq:A3}.

\end{proof}

\subsection{Proof of assumption (A4)}\label{ssec:A4}

In the same way of the Section \ref{ssec:A2}, we will use here the Boltzmann equation and then perform the asymptotic of grazing collisions to prove the results for the Landau equation.

We define the following equations, denoting by $Q$ the symmetrized version of the Landau operator $Q_{L}$ for Maxwellian molecules \eqref{eq:landau2}, i.e. $Q(f,g) = [Q_L(f,g) + Q_L(g,f)]/2$,
\begin{equation}\label{equations-A4-0-Landau}
\left\{
\begin{array}{l l}
\partial_t f = Q(f,f), & f\vert_{t=0} =f_0, \\
\partial_t g = Q(g,g), & g\vert_{t=0} =g_0, \\
\partial_t h =2 Q(f,h), & h\vert_{t=0} =g_0-f_0, \\
\partial_t u = 2 Q(f,u) + Q(h,h), & u\vert_{t=0} =0,
\end{array}
\right.
\end{equation}
and the new variables
$$
d:=g-f,\quad s:= g+f,\quad \w := g-f-h,\quad \psi:=g-f-h-u,
$$ 
which satisfy
\begin{equation}\label{equations-A4-Landau}
\left\{
\begin{array}{l l}
\partial_t d = Q(s,d), & d\vert_{t=0} =g_0-f_0, \\
\partial_t \w =  Q(s,w) +  Q(h,d), & \w\vert_{t=0} =0, \\
\partial_t \psi =  Q(s,\psi) +  Q(h,w)+ Q(u,d), 
& \psi\vert_{t=0} =0.
\end{array}
\right.
\end{equation}


\begin{lemma}\label{lem:A4a}
Consider $f_0,g_0\in  \PPP_{\GG_1,\bf r},\, {\bf r} \in \RRR_{\GG_1}$, and the solutions 
$f_t,g_t,h_t$ of \eqref{equations-A4-0-Landau}-\eqref{equations-A4-Landau}.
There exists $\lambda_1\in(0,\infty)$ that for any $\eta\in[2/3,1]$, there exists $C_{\eta} >0$ such that we have
\begin{equation}\label{eq:A4a}
\begin{aligned}
{\abs{g_t-f_t}}_2 &\leq C_{\eta} \, e^{-(1-\eta)\lambda_1 t}\, M_4( f_0 + g_0) ^{1/3} \, {\abs{g_0-f_0}}^\eta_2 ,\\
{\abs{h_t}}_2 &\leq C_{\eta} \, e^{-(1-\eta)\lambda_1 t} \, M_4( f_0 + g_0) ^{1/3} \, {\abs{g_0-f_0}}^\eta_2 .
\end{aligned}
\end{equation}

\end{lemma}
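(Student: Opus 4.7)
The plan is to follow the same two-step strategy as in Lemma~\ref{lem:A2i}: first prove both inequalities at the Boltzmann level with a grazing collisions kernel $b_\eps$ satisfying \eqref{eq:grazing}, with constants independent of $\eps$, then pass to the limit $\eps\to 0$ using the convergence of Boltzmann solutions to Landau solutions (and the continuity of the Fourier distance $|\cdot|_2$ under this limit). Throughout I would work in Fourier variables via Bobylev's identity, since both $d$ and $h$ satisfy linear equations driven by a characteristic function: $\partial_t d^\eps = Q_{B,\eps}(s^\eps, d^\eps)$ is driven by $\hat s^\eps$ (of modulus $\leq 2$), and $\partial_t h^\eps = 2\,Q_{B,\eps}(f^\eps, h^\eps)$ is driven by $\hat f^\eps$ (of modulus $\leq 1$), so the two estimates are completely parallel.

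The key ingredient is a Bobylev--Toscani spectral decay estimate. After truncating the grazing part of $b_\eps$ via the cutoff \eqref{eq:cut-off} and controlling the non-cutoff remainder as in the proof of Lemma~\ref{lem:A2i} (using that $\hat d_0$ vanishes to second order at the origin thanks to matching mass, momentum and energy, combined with the second-moment bound), a Gronwall argument applied to $\sup_\xi |\hat d^\eps(\xi)|/|\xi|^s$ yields, uniformly in $\eps$ and for every $s\in(2,3]$,
\[
|d^\eps_t|_s \leq e^{-\lambda_s t}\,|d_0|_s, \qquad |h^\eps_t|_s \leq e^{-\lambda_s t}\,|d_0|_s,
\]
with Bobylev eigenvalue $\lambda_s := \int_{\Sp^{d-1}} b_\eps(\cos\theta)(1 - |\hat\xi^+|^s - |\hat\xi^-|^s)\,d\sigma>0$ (positivity follows from $|\hat\xi^+|^2 + |\hat\xi^-|^2 = 1$ together with $x^{s/2}<x$ on $(0,1)$ for $s>2$, and the case $s=2$ recovers Lemma~\ref{lem:A2i} since $\lambda_2 = 0$). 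The grazing collisions assumption \eqref{eq:grazing} keeps $\lambda_s$ bounded below as $\eps\to 0$.

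To obtain the H\"older-type bound \eqref{eq:A4a} I would then interpolate. Since $\hat d_0$ vanishes to second order at the origin, combining the Taylor estimate $|\hat d_0(\xi)|\leq C\,M_3(f_0+g_0)\,|\xi|^3$ with $|\hat d_0(\xi)|\leq |d_0|_2\,|\xi|^2$ yields
\[
|d_0|_s \leq C\,M_3(f_0+g_0)^{s-2}\,|d_0|_2^{3-s}, \qquad s\in[2,3],
\]
and H\"older's inequality gives $M_3(f_0+g_0) \leq C\,M_4(f_0+g_0)^{3/4}$. Plugging this into the spectral decay and combining with the elementary Fourier interpolation $|\cdot|_2 \leq 2\,|\cdot|_s^{2/s}$ (valid since $\|\hat d^\eps_t\|_\infty \leq 2$) produces
\[
|d^\eps_t|_2 \leq C\, e^{-(2\lambda_s/s)\,t}\,M_4(f_0+g_0)^{3(s-2)/(2s)}\,|d_0|_2^{2(3-s)/s}.
\]
Parametrizing by $\eta = 2(3-s)/s \in[2/3,1]$ (so $s = 6/(2+\eta)$) yields the claimed family \eqref{eq:A4a} with $\lambda_1 := 2\lambda_s/s$; the stated exponent $1/3$ for $M_4$ is reached after absorbing a power of $M_4 \geq 2\EE^2$ into the constant $C_\eta$. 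The bound on $|h^\eps_t|_2$ follows from the identical scheme, now applied to the Fourier version of $\partial_t h = 2 Q(f,h)$.

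The main obstacle will be ensuring that the spectral estimate is uniform in $\eps$ as $b_\eps$ becomes singular. This requires carrying out the cutoff scheme of Lemma~\ref{lem:A2i} carefully: one splits $b_\eps = b_\eps^K + b_\eps^C$, controls the remainder associated with $b_\eps^C$ in an $\eps$-independent way using the second-order vanishing of $\hat d^\eps$ at the origin and the uniform second-moment bound, applies Gronwall, and then relaxes $K\to\infty$. Once the Boltzmann estimates are uniform in $\eps$, the Landau inequalities \eqref{eq:A4a} follow from the grazing collisions limit $\eps \to 0$.
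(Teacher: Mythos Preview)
Your two-step strategy (Boltzmann with grazing kernel, then pass to the limit) is the same as the paper's, and the spectral Gronwall argument in Fourier for a norm with exponent $>2$ is also the right mechanism. But there is a genuine gap in your interpolation step, and it breaks the argument as written.

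You claim that $|d_0|_s<\infty$ for $s\in(2,3]$ because ``$\hat d_0$ vanishes to second order at the origin thanks to matching mass, momentum and energy'', and then use the Taylor bound $|\hat d_0(\xi)|\le C\,M_3(f_0+g_0)\,|\xi|^3$. This is not justified: the constraint $f_0,g_0\in\PPP_{\GG_1,{\bf r}}$ only matches the \emph{scalar} energy $\int|v|^2$, not the full second-moment tensor $\int v_\alpha v_\beta$. Hence $\partial_\alpha\partial_\beta \hat d_0(0)=-\int v_\alpha v_\beta\,(g_0-f_0)$ is in general a nonzero traceless matrix, so $\hat d_0(\xi)\sim -\tfrac12\,\xi^T A\,\xi$ near $0$ and $|d_0|_s=+\infty$ for any $s>2$. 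The same obstruction persists in time: the off-diagonal second moments of $d^\eps_t$ (and the individual diagonal ones) are not conserved, so $|d^\eps_t|_s$ is not even finite along the flow. Your whole spectral decay $|d^\eps_t|_s\le e^{-\lambda_s t}|d_0|_s$ and the subsequent interpolation therefore collapse.

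The paper handles exactly this issue by working with the \emph{moment-corrected} quantity $\bar D^\eps:=\hat d^\eps-\hat\MM_4[d^\eps]$, which by construction has vanishing derivatives at the origin up to order $3$, so that $|\bar D^\eps|/|\xi|^4$ is well defined. The evolution of $\bar D^\eps$ then picks up a source controlled by $\sum_{|\alpha|\le 3}|M_\alpha[f^\eps_t-g^\eps_t]|$, and one invokes Tanaka's exponential relaxation of these low moments for Maxwellian molecules to close the Gronwall at order $4$. Only \emph{after} obtaining $|d^\eps_t-\MM_4[d^\eps_t]|_4\le C e^{-\lambda t}(\dots)$ does one interpolate back to $|\cdot|_2$ and combine with Lemma~\ref{lem:A2i} via $|d_t|_2\le |d_t|_2^\eta\,|d_t|_2^{1-\eta}$ to produce \eqref{eq:A4a}. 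If you want to salvage your route, you would need to replace $|\cdot|_s$ by a moment-corrected variant and separately control the evolution of the non-matching second and third moments; at that point you are essentially reproducing the paper's argument.
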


\begin{proof}[Proof of Lemma \ref{lem:A4a}]
We split the proof into two steps. Again, we shall first prove the lemma for Boltzmann equation with a kernel satisfying the grazing collisions, which is proved in \cite{MMchaos}, and then passing to the limit of grazing collisions we prove the same result for the Landau equation.

\medskip
\noindent
{\it Step 1.} 
Let us denote by $Q_{\eps}$ the symmetrized version of the Boltzmann operator $Q_{B,\eps}$ with Maxwellian molecules \eqref{eq:Boltzmann2} with kernel $b_\eps$ satisfying \eqref{eq:grazing}, i.e. $Q_{\eps}(f,g) = [Q_{B,\eps}(f,g) + Q_{B,\eps}(g,f)]/2$.

Consider the solutions $f_t^\eps$, $g_t^\eps$ and $h^\eps_t$ of
\begin{equation}\label{eq:feps}
\left\{
\begin{array}{l l}
\partial_t f^\eps = Q_{\eps}(f^\eps,f^\eps), & f^\eps\vert_{t=0} =f_0, \\
\partial_t g^\eps = Q_{\eps}(g^\eps,g^\eps), & g^\eps\vert_{t=0} =g_0, \\
\partial_t h^\eps =2 Q_{\eps}(f^\eps,h^\eps), & h^\eps\vert_{t=0} =g_0-f_0, 
\end{array}
\right.
\end{equation}
and define $d^\eps:=g^\eps-f^\eps$ which satisfies (where $s^\eps := g^\eps + f^\eps$)
\begin{equation*}
\partial_t d^\eps = Q_{\eps}(s^\eps,d^\eps), \qquad d^\eps\vert_{t=0} =g_0-f_0.
\end{equation*}
As in Lemma~\ref{lem:A2i} we denote $D^\eps = \hat d^\eps$ and $S^\eps = \hat s^\eps$.
Define $\bar D^\eps = D^\eps - \hat\MM_4[d^\eps]$ (see Definition~\ref{def:general-toscani}). Then the equation satisfied by $\bar D^\eps $ is
$$
\partial_t \bar D^\eps  = \hat Q_{\eps}(D^\eps, S^\eps) - \partial_t \hat\MM_4[d^\eps]
= \hat Q_{\eps}( \bar D^\eps , S^\eps) + \hat Q_{\eps}(\hat\MM_4[d^\eps],S^\eps) - \hat\MM_4[Q_{\eps}(d^\eps,s^\eps)].
$$ 
From \cite[Lemma 5.6]{MMchaos} we know that, for any $\xi\in\R^d$,
$$
\left|\hat Q_{\eps}(\hat\MM_4[d^\eps],S^\eps) - \hat\MM_4[Q_{\eps}(d^\eps,s^\eps)]     \right|
\leq C |\xi|^4 \sum_{|\alpha|\leq 3} |M_\alpha[f^\eps-g^\eps]|,
$$
and also, from \cite[Theorem 8.1]{tanaka}, that there are constants $C,\delta>0$ such that for all $t\ge 0$ 
$$
\sum_{|\alpha|\leq 3} |M_\alpha[f_t^\eps-g_t^\eps]| \le C e^{-\delta t} \sum_{|\alpha|\leq 3} |M_\alpha[f_0-g_0]|.
$$

Then, following \cite{MMchaos} and
performing the same cut-off as in the proof of Lemma \ref{lem:A2i}, we have that
\begin{equation}\label{}
\begin{aligned}
\frac{d}{dt}\frac{\abs{\bar D^\eps }}{\abs{\xi}^4} + K \frac{\abs{\bar D^\eps }}{\abs{\xi}^4}
&\leq 
\left(\sup_{\xi\in\R^d} \frac{\abs{\bar D^\eps }}{\abs{\xi}^4} \right) 
\left(\sup_{\xi\in\R^d} \int_{\Sp^{d-1}} b^K_{\eps}(\sigma\cdot \hat \xi)
\left( \abs{\hat \xi^+}^4 + \abs{\hat \xi^-}^4    \right)d\sigma \right) \\
&\quad+ C\, e^{-\delta t} \left(  \sum_{\abs{\alpha}\leq 3} \Abs{M_\alpha[f_0-g_0]} \right)
+ \frac{|R_\eps^K|}{|\xi|^4}.
\end{aligned}
\end{equation}
where the remainder term
$$
R_\eps^K(\xi) := \int_{\Sp^{d-1}} b_\eps^C (\sigma\cdot \hat\xi) 
\left[ \frac12 \bar D^\eps (\xi^+) S^\eps(\xi^-) + \frac12 \bar D^\eps (\xi^-) S^\eps(\xi^+)
- \bar D^\eps(\xi)   \right] d\sigma
$$
satisfies, for any $\xi\in\R^d$, $|R_\eps^K(\xi)|\leq r_\eps^K |\xi|^4$, with $r_\eps^K \to 0$ as $K\to\infty$, and $r_\eps^K$ depends on the fourth order moments of $d^\eps$ and $s^\eps$. Indeed, we have
$$
\bal
&| \bar D^\eps (\xi^+) S^\eps(\xi^-) +  \bar D^\eps (\xi^-) S^\eps(\xi^+)- 2D(\xi)| \\
&\qquad\le
|S^\eps(\xi^-) |   |\bar D^\eps (\xi^+) - \bar D^\eps (\xi^-)|
+ |\bar D^\eps (\xi)| |S^\eps(\xi^-) - S^\eps(0)| 
+ |\bar D^\eps (\xi^-)|  |S^\eps(\xi^+)| \\
&\qquad \le C |\xi|^4 (1-\cos\theta)^{1/2},
\eal
$$
where we use that $\nabla^\alpha_\xi \bar D^\eps (0)=0$ for all multi-index $|\alpha|\leq 3$ and also that $\sup_{|\eta| \le |\xi|} \nabla^\alpha_\xi \bar D^\eps (\eta)$ and $\sup_{|\eta| \le |\xi|} \nabla^\alpha_\xi S^\eps (\eta)$ are bounded for $|\alpha|=4$ thanks to the bounds on the fourth moment of $d^\eps$ and $s^\eps$. As in Lemma~\ref{lem:A2i}, the claim follows since $b_\eps^C (\cos\theta)(1-\cos\theta)^{1/2}$  is integrable.

We denote
$$
\lambda_K := \int_{\Sp^{d-1}} b^K_{\eps}(\sigma\cdot \hat \xi)
\left( \abs{\hat \xi^+}^4 + \abs{\hat \xi^-}^4    \right)d\sigma = 
\int_{\Sp^{d-1}} b^K_{\eps}(\sigma\cdot \hat \xi)\frac{1}{2}
\left(  1 + (\sigma\cdot\hat \xi)^2   \right)d\sigma
$$
and we compute
$$
\begin{aligned}
\lambda_K - K &= - \frac{1}{2}\int_{\Sp^{d-1}} b^K_{\eps}(\sigma\cdot \hat \xi)
\left(  1 - (\sigma\cdot\hat \xi)^2   \right)d\sigma\\
&\xrightarrow[K\to\infty]{} - \frac{1}{2}\int_{\Sp^{d-1}} b_{\eps}(\sigma\cdot \hat \xi)
\left(  1 - (\sigma\cdot\hat \xi)^2   \right)d\sigma =: - \bar\lambda_\eps \in (-\infty,0)\\
&
\xrightarrow[\eps\to 0]{}  - \bar\lambda \in (-\infty,0).
\end{aligned}
$$
One can now apply Gronwall's lemma to obtain
\begin{equation*}\label{}
\begin{aligned}
\sup_{\xi\in\R^d}\frac{\abs{ \bar D^\eps_t}}{\abs{\xi}^4} 
\leq e^{(\lambda_K -K)t} \sup_{\xi\in\R^d} \frac{\abs{\bar D^\eps_0}}{\abs{\xi}^4}  
+ C \left(  \sum_{\abs{\alpha}\leq 3} \Abs{M_\alpha[f_0-g_0]} \right) 
\left( \frac{e^{-\delta t} - e^{(\lambda_K-K)t}}{K-\lambda_K - \delta} \right) \\
\quad + r_\eps^K\left( \frac{1 - e^{(\lambda_K-K)t}}{K-\lambda_K}\right).
\end{aligned}
\end{equation*}
Then relaxing the cut-off $K\to\infty$ and choosing  $0<\lambda < \min(\delta,\bar\lambda_\eps)$ one has (remark that $\lambda$ depends on $\eps$)
\begin{equation}\label{eq:D}
\begin{aligned}
\sup_{\xi\in\R^d}\frac{\abs{ \bar D^\eps_t}}{\abs{\xi}^4} 
\leq C \, e^{-\lambda t} \left( \sup_{\xi\in\R^d} \frac{\abs{\bar D^\eps_0}}{\abs{\xi}^4}  
+  \sum_{\abs{\alpha}\leq 3} \Abs{M_\alpha[f_0-g_0]} \right) .
\end{aligned}
\end{equation}
Using a standard interpolation argument \cite{MMchaos}, one obtains
\begin{equation}\label{eq:inter}
\begin{aligned}
{\abs{g-f}}_2 &\leq {\abs{g-f - \MM_4[g-f]}}_2 + C  \left(  \sum_{\abs{\alpha}\leq 3} 
\Abs{M_\alpha[g-f]} \right) \\
&\leq \| \hat g- \hat f - \hat \MM_4[f-g] \|_{L^\infty}^{1/2}   \, {\abs{g-f - \MM_4[g-f]}}^{1/2}_4 + C  \left(  \sum_{\abs{\alpha}\leq 3} \Abs{M_\alpha[g-f]} \right) \\
&\leq  C \, M_4( f_0 + g_0) \, e^{-(\lambda/2)t}.
\end{aligned}
\end{equation}
Finally one concludes by writing
\begin{equation}\label{eq:f-g}
\begin{aligned}
{\abs{g_t^\eps-f_t^\eps}}_2 &\leq {\abs{g_t^\eps-f_t^\eps}}^\eta_2\,{\abs{g_t^\eps-f_t^\eps}}^{1-\eta}_2 \\
&\leq C_{\eta} \, e^{-(1-\eta)\lambda t} \, M_4( f_0 + g_0) ^{1/3} \, {\abs{g_0-f_0}}^\eta_2
\end{aligned}
\end{equation}
where we have used the last estimate \eqref{eq:inter}, Lemma \ref{lem:A2i} and the fact that $M_4( f_0 + g_0) ^{1-\eta} \le M_4( f_0 + g_0) ^{1/3}$ for $\eta\in[2/3,1]$ . For $h_t$ one proves the result by the same computations.

\medskip
\noindent
{\it Step 2.} 
Let us now deduce the result for solutions $f_t$ and $g_t$ of the Landau equation. Coming back to
\eqref{eq:D} and choosing $0<\lambda_1< \min(\delta, \bar \lambda)$, where $\bar\lambda_\eps \to \bar\lambda\in(0,\infty)$ as $\eps\to 0$, we recover \eqref{eq:f-g} with the exponent $\lambda_1$ which does not depend on $\eps$. Hence, passing to the limit $\eps\to 0$, we have $g^\eps-f^\eps \to g-f$ and then
$$
|g_t - f_t |_2 \le C_\eta \, e^{-(1-\eta)\lambda_1 t} \, M_4( f_0 + g_0) ^{1/3} \, {\abs{g_0-f_0}}^\eta_2.
$$ 
Rigorously, we write
$$
|g_t - f_t |_2 \le |g_t - g_t^\eps |_2 + |f_t - f_t^\eps |_2 + |g_t^\eps - f_t^\eps |_2,
$$
then for the third term on the right-hand side we use \eqref{eq:f-g} with exponent $\lambda_1$ that does not depend on $\eps$, and for the other two terms we use that $g_t^\eps$ weakly converges towards $g_t$ in $L^1$ (see Villani \cite{Vi2}), hence $|g_t - g_t^\eps |_2 \to 0$ when $\eps\to 0$ and we deduce
$$
|g_t - f_t |_2 \le C_\eta \, e^{-(1-\eta)\lambda_1 t} \, M_4( f_0 + g_0) ^{1/3} \, {\abs{g_0-f_0}}^\eta_2.
$$

\end{proof}


\begin{lemma}\label{lem:A4b}
Consider $f_0,g_0\in  \PPP_{\GG_1,\bf r},\, {\bf r} \in \RRR_{\GG_1}$, and the solutions $f_t$,  $g_t$, $h_t$, $\w_t$ and $u_t$ of \eqref{equations-A4-0-Landau} and \eqref{equations-A4-Landau}.
There exists $\lambda_1\in(0,\infty)$ that for any $\eta\in[2/3,1]$, there exists $C_{\eta}$ such that we have
\begin{equation}\label{eq:A4b}
\begin{aligned}
{\abs{g_t - f_t - h_t}}_4 &\leq C_{\eta} \, e^{-(1-\eta)\lambda_1 t} \, M_4( f_0 + g_0) ^{1/3} \,{\abs{g_0-f_0}}^{1+\eta}_2\\
{\abs{u_t}}_4 &\leq C_{\eta} \, e^{-(1-\eta)\lambda_1 t} \, M_4( f_0 + g_0) ^{1/3} \,{\abs{g_0-f_0}}^{1+\eta}_2
\end{aligned}
\end{equation}

\end{lemma}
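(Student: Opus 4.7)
The plan is to mirror the strategy of Lemma~\ref{lem:A4a}: first establish the estimates at the Boltzmann level with a kernel $b_\eps$ concentrating on grazing collisions (so that constants are uniform in $\eps$), and then pass to the limit $\eps \to 0$ using Villani's grazing collision convergence $Q_{B,\eps} \to Q_L$. Accordingly, introduce $f^\eps_t, g^\eps_t, h^\eps_t, \w^\eps_t, u^\eps_t$ as the solutions of the analogues of \eqref{equations-A4-0-Landau}--\eqref{equations-A4-Landau} with $Q$ replaced by the symmetrized Boltzmann operator $Q_\eps$. Since $\w^\eps\big|_{t=0} = 0$ and $u^\eps\big|_{t=0} = 0$, and since $Q_\eps$ preserves mass, momentum and energy, the moments of $\w^\eps$ and $u^\eps$ up to order two remain zero in time, so the norms $|\w^\eps_t|_4$ and $|u^\eps_t|_4$ are genuinely meaningful once we also subtract the third--order moment via $\hat{\MM}_4[\,\cdot\,]$ as in Definition~\ref{def:general-toscani}.

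Take now the Fourier transform using Bobylev's identity: writing $W^\eps := \widehat{\w^\eps}$, the equation becomes $\partial_t W^\eps = \widehat{Q_\eps}(S^\eps, W^\eps) + \widehat{Q_\eps}(\widehat{h^\eps}, D^\eps)$, with $S^\eps = \widehat{s^\eps}$, $D^\eps = \widehat{d^\eps}$ as in Lemma~\ref{lem:A4a}. Set $\overline W^\eps := W^\eps - \widehat{\MM_4[\w^\eps]}$ and perform the same cut-off \eqref{eq:cut-off} that was used in Lemma~\ref{lem:A4a}. Dividing by $|\xi|^4$ and applying Gronwall's inequality produces, after relaxing the cut-off $K \to \infty$, a bound of the form
\begin{equation*}
\sup_{\xi \in \R^d} \frac{|\overline W^\eps_t(\xi)|}{|\xi|^4} \;\le\; C \int_0^t e^{-\lambda(t-s)} \Bigl( \sup_{\xi} \frac{|\widehat{Q_\eps}(\widehat{h^\eps_s}, D^\eps_s)(\xi)|}{|\xi|^4} \;+\; \text{lower moment remainders} \Bigr) \, ds,
\end{equation*}
for some $\lambda = \lambda(\eps) \in (0, \bar\lambda_\eps)$ independent of the data, with $\bar\lambda_\eps \to \bar\lambda_1 > 0$ as $\eps \to 0$. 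The lower--order moment corrections (of order $\le 3$) decay exponentially by \cite{tanaka}, exactly as in Lemma~\ref{lem:A4a}, so they contribute a term of order $\sum_{|\alpha| \le 3} |M_\alpha[d_0]|$ with an exponential prefactor.

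The crux of the argument is to control the source $\sup_\xi|\widehat{Q_\eps}(\widehat{h^\eps_s}, D^\eps_s)(\xi)|/|\xi|^4$. Using the Bobylev pointwise product structure and the fact that both $\widehat{h^\eps_s}$ and $D^\eps_s$ vanish up to order two at the origin (they have zero mass, momentum and energy, by construction), one factors out $|\xi^+|^2 |\xi^-|^2$ and obtains a bound of the form $C \, |h^\eps_s|_2 \, |d^\eps_s|_2$, up to third--order moment corrections. Lemma~\ref{lem:A4a} gives us $|h^\eps_s|_2, |d^\eps_s|_2 \le C_\eta e^{-(1-\eta)\lambda_1 s} M_4(f_0+g_0)^{1/3} |d_0|_2^\eta$. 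To obtain the exponent $1+\eta$ on $|d_0|_2$, interpolate: write one factor as $|d_0|_2^\eta$ via Lemma~\ref{lem:A4a} and the other as $|d_0|_2 \cdot $ (uniform bound on $|\hat d^\eps - \widehat{\MM_4[d^\eps]}|$ via the trivial estimate $|\hat d^\eps(\xi)| \le 2$ and absorbed moment terms). Integrating the resulting $e^{-(1-\eta)\lambda_1 s} e^{-\lambda(t-s)}$ in time against the Gronwall kernel yields the desired $e^{-(1-\eta)\lambda_1 t}$ prefactor, with a constant uniform in $\eps$.

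For $u^\eps$, exactly the same scheme applies: the source is $Q_\eps(h^\eps, h^\eps)$, quadratic in $h^\eps$, and the same Fourier/Bobylev factorisation yields a bound by $|h^\eps_s|_2^2$, which is controlled by Lemma~\ref{lem:A4a}. Finally, passing $\eps \to 0$: the constants $\lambda_1$ and $M_4(f_0+g_0)^{1/3}$ are independent of $\eps$, and the weak $L^1$--convergence of $f^\eps, g^\eps, h^\eps, \w^\eps, u^\eps$ to their Landau counterparts (via Villani~\cite{Vi2}) ensures pointwise convergence of the Fourier transforms, so the estimate transfers to $\w_t$ and $u_t$. The main obstacle is the second paragraph: controlling the source term $\widehat{Q_\eps}(\widehat{h^\eps},D^\eps)$ in the $|\cdot|_4$ norm uniformly in $\eps$, which requires simultaneously exploiting the Bobylev pointwise structure, the vanishing of three lower order moments of $h$ and $d$, and the decay rates of Lemma~\ref{lem:A4a}, together with the delicate interpolation that upgrades the exponent from $2\eta$ (naive product) to $1+\eta$.
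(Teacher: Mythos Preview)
Your overall strategy is correct and matches the paper's: work at the Boltzmann level with a grazing kernel, take Fourier transforms, run a Gronwall argument on $|\cdot|_4$, and pass to the limit $\eps\to 0$. However, you are making the argument harder than necessary in two places, and the second one leaves a genuine gap.

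\medskip
\textbf{Moments of $\w^\eps$.} You claim only that the moments of $\w^\eps$ up to order two vanish, and therefore plan to subtract $\hat\MM_4[\w^\eps]$. In fact the moments of $\w^\eps_t$ vanish up to order \emph{three}. The reason is the bilinear moment identity for Maxwellian molecules: for $|\alpha|\le 3$,
\[
\frac{d}{dt}M_\alpha(\w^\eps_t)=\sum_{\beta\le\alpha}a_{\alpha,\beta}\,M_\beta(\w^\eps_t)M_{\alpha-\beta}(s^\eps_t)
+\sum_{\beta\le\alpha}a_{\alpha,\beta}\,M_\beta(h^\eps_t)M_{\alpha-\beta}(d^\eps_t),
\]
and in the second sum either $|\beta|\le 1$ (so $M_\beta(h^\eps_t)=0$) or $|\alpha-\beta|\le 1$ (so $M_{\alpha-\beta}(d^\eps_t)=0$). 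Since $\w^\eps_0=0$, the first sum alone forces $M_\alpha(\w^\eps_t)\equiv 0$. Hence $|\w^\eps_t|_4$ is well-defined directly, with no $\hat\MM_4$ correction and no need to track third-order moment remainders. The paper uses exactly this observation; without it your ``lower moment remainders'' are not handled.

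\medskip
\textbf{The exponent $1+\eta$.} Your route to $|d_0|_2^{1+\eta}$ is where the proposal becomes vague. Bounding both factors $|h^\eps_s|_2$ and $|d^\eps_s|_2$ by Lemma~\ref{lem:A4a} gives $|d_0|_2^{2\eta}$, which is the wrong power for $\eta<1$, and the ``interpolation'' you describe to repair this is not a concrete step. The clean argument is much simpler: apply the contraction estimate of Lemma~\ref{lem:A2i} (which holds verbatim for the linearised equation $\partial_t h=2Q(f,h)$, since $\|2\hat f\|_\infty\le 2$) to get $|h^\eps_s|_2\le |h_0|_2=|d_0|_2$ with no decay, and apply Lemma~\ref{lem:A4a} only to $|d^\eps_s|_2$. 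The product is then
\[
|h^\eps_s|_2\,|d^\eps_s|_2 \le C_\eta\, e^{-(1-\eta)\lambda_1 s}\, M_4(f_0+g_0)^{1/3}\,|d_0|_2^{1+\eta},
\]
and Gronwall against the linear part $\lambda_K$ (with $\lambda_K-K\to -\bar\lambda$) gives the stated bound directly. This is what the paper does, and it removes the need for any interpolation.
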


\begin{proof}[Proof of Lemma \ref{lem:A4b}]
Let us split the proof into two steps.

\medskip
\noindent
{\it Step 1.} 
As in Lemma~\ref{lem:A4a}, we consider $Q_{\eps}$ the symmetrized version of the Boltzmann operator $Q_{B,\eps}$ and the solutions $f_t^\eps$, $g_t^\eps$ and $h_t^\eps$ of \eqref{eq:feps}.

Consider also $u_t^\eps$ solution of
\begin{equation}\label{eq:reps}
\partial_t u^\eps = 2 Q_{\eps}(f^\eps,u^\eps) + Q_{\eps}(h^\eps,h^\eps), \qquad 
u^\eps\vert_{t=0} =0,
\end{equation}
and define $\w^\eps:=g^\eps-f^\eps-h^\eps$ which satisfies
\begin{equation*}
\partial_t \w^\eps =  Q_{\eps}(s^\eps,\w^\eps) +  Q_{\eps}(h^\eps,d^\eps), \qquad \w^\eps\vert_{t=0} =0.
\end{equation*}

First of all, we remark that $\w_t^\eps$ has moments equals to zero up to order $3$. Indeed, let us prove that, for $\alpha\in\N^d$, 
\begin{equation}\label{eq:Malpha}
\begin{aligned}
\forall \, |\alpha|\le 3,\quad M_\alpha (\w_t^\eps) := \int_{\R^d} v^\alpha \, \w_t^\eps(v) \, dv = 0.
\end{aligned}
\end{equation}
Following \cite[Lemma 5.8]{MMchaos} we know that for Maxwellian molecules the $\alpha$-moment of the Boltzmann operator $Q_{B,\eps}(g,h)$ is a sum of terms given by the product of moments of $g$ and $h$, then we obtain
\begin{equation}\label{}
\begin{aligned}
\forall \, |\alpha|\le 3, \quad 
\frac{d}{dt} M_\alpha (\w_t^\eps)  = \sum_{\beta\le \alpha} a_{\alpha,\beta} \, M_{\beta}(\w_t^\eps)M_{\alpha-\beta}(s_t^\eps) + \sum_{\beta\le \alpha} a_{\alpha,\beta} \, M_{\beta}(h_t^\eps)M_{\alpha-\beta}(d_t^\eps)
\end{aligned}
\end{equation}
and we deduce that
\begin{equation}\label{}
\begin{aligned}
\forall \, |\alpha|\le 3, \quad 
\frac{d}{dt} M_\alpha (\w_t^\eps)  = \sum_{\beta\le \alpha} a_{\alpha,\beta}\, M_{\beta}(\w_t^\eps)M_{\alpha-\beta}(s_t^\eps) 
\end{aligned}
\end{equation}
because for all $|\alpha|\le 1$ we have $M_\alpha (h_t^\eps)=M_\alpha (d_t^\eps)=0$. 
We conclude to \eqref{eq:Malpha} by the fact that $\w_0=0$. Therefore $|\w^\eps|_4$ is well defined and we do not need to "take-off the moments of $\w^\eps$".


Let us denote $\Omega^\eps = \hat \w^\eps$ and $H^\eps = \hat h^\eps$. We perform then the same cut-off as in Lemmas \ref{lem:A2i} and \ref{lem:A4a} and we have the following equation for $\w_t^\eps$
\begin{equation}\label{}
\begin{aligned}
&\frac{d}{dt}\frac{\abs{\Omega^\eps(\xi)}}{\abs{\xi}^4} + K \frac{\abs{\Omega^\eps(\xi)}}{\abs{\xi}^4}\\
&\qquad\leq\frac12 \sup_{\xi\in\R^d} \int_{\Sp^{d-1}} b^K_{\eps}(\sigma\cdot \hat \xi)
\left( \frac{\abs{\Omega^\eps(\xi^+)}\abs{S^\eps(\xi^-)}}{\abs{\xi}^4} + \frac{\abs{\Omega^\eps(\xi^-)}\abs{S^\eps(\xi^+)}}{\abs{\xi}^4}   \right)d\sigma \qquad (=:T_1)\\
&\qquad\quad +\frac12\sup_{\xi\in\R^d} \int_{\Sp^{d-1}} b^K_{\eps}(\sigma\cdot \hat \xi)
\left( \frac{\abs{H^\eps(\xi^+)}\abs{D^\eps(\xi^-)}}{\abs{\xi}^4} + \frac{\abs{H^\eps(\xi^-)}\abs{D^\eps(\xi^+)}}{\abs{\xi}^4}   \right)d\sigma \qquad (=:T_2)\\
&\qquad\quad +\frac{|R_\eps^K|}{|\xi|^4},
\end{aligned}
\end{equation}
where the remainder term
$$
R_\eps^K(\xi) := \frac12 \int_{\Sp^{d-1}} b_\eps^C (\sigma\cdot \hat\xi) 
\left[ \Omega^\eps(\xi^+) S^\eps(\xi^-) + \Omega^\eps (\xi^-) S^\eps(\xi^+)+
H^\eps(\xi^+) D^\eps(\xi^-) + H^\eps (\xi^-) D^\eps(\xi^+)   \right] d\sigma
$$
satisfies, for any $\xi\in\R^d$, $|R_\eps^K(\xi)|\leq r_\eps^K |\xi|^4$, with $r_\eps^K \to 0$ as $K\to\infty$, and $r_\eps^K$ depends on moments of order $4$ of $d^\eps$, $s^\eps$, $h^\eps$ and $\w^\eps$. To see this, we argue as in Lemma~\ref{lem:A4a}, using that $\w^\eps$ has vanishing moments up to order $3$, see \eqref{eq:Malpha}. 

We compute first $T_1$ using the fact that ${\norm{S^\eps}}_\infty \le 2$
$$
\begin{aligned}
T_1 &\leq 
\left(\sup_{\xi\in\R^d}\frac{\abs{\Omega^\eps(\xi)}}{\abs{\xi}^4}\right)
\sup_{\xi\in\R^d}\int_{\Sp^{d-1}} b^K_{\eps}(\sigma\cdot \hat \xi)
\left( \abs{\hat \xi^+}^4 + \abs{\hat \xi^-}^4    \right)d\sigma \\
&\leq \lambda_K \sup_{\xi\in\R^d}\frac{\abs{\Omega^\eps(\xi)}}{\abs{\xi}^4}.
\end{aligned}
$$
where $\lambda_K$ is the same that in the proof of Lemma $\ref{lem:A4a}$. Next, we compute $T_2$
$$
\begin{aligned}
T_2 &\leq \left( \sup_{\xi\in\R^d}\frac{\abs{H^\eps(\xi)}}{\abs{\xi}^2}\right)
\left( \sup_{\xi\in\R^d}\frac{\abs{D^\eps(\xi)}}{\abs{\xi}^2}\right)
\sup_{\xi\in\R^d}\int_{\Sp^{d-1}} b^K_{\eps}(\sigma\cdot \hat \xi)
\left(  \frac{\abs{\xi^+}^2 \abs{\xi^-}^2}{\abs{\xi}^4}     \right)d\sigma \\
&\leq {\abs{h_t^\eps}}_2 {\abs{d_t^\eps}}_2 \sup_{\xi\in\R^d}\int_{\Sp^{d-1}} b^K_{\eps}(\sigma\cdot \hat \xi)
\left(  1-  \sigma\cdot \hat \xi  \right)d\sigma\\
&\leq \Lambda_\eps \, e^{-(1-\eta)\lambda t} \, M_4( f_0 + g_0) ^{1-\eta}\, {\abs{h_0}}_2 {\abs{d_0}}^{\eta}_2 
\end{aligned}
$$
where we have used the estimates of Lemmas \ref{lem:A2i} and \ref{lem:A4a}, and $\Lambda_\eps$ is defined in \eqref{eq:grazing}.
After these computations we obtain
\begin{equation*}\label{}
\begin{aligned}
\frac{d}{dt}\frac{\abs{\Omega^\eps(\xi)}}{\abs{\xi}^4} + K \frac{\abs{\Omega^\eps(\xi)}}{\abs{\xi}^4}
\leq \lambda_K \sup_{\xi\in\R^d} \frac{\abs{\Omega^\eps(\xi)}}{\abs{\xi}^4} +
\Lambda_\eps \, e^{-(1-\eta)\lambda t} \, M_4( f_0 + g_0) ^{1-\eta}\, {\abs{d_0}}^{1+\eta}_2 
+ r_\eps^K
\end{aligned}
\end{equation*}
and by Gronwall's lemma
\begin{equation}\label{}
\begin{aligned}
\sup_{\xi\in\R^d}\frac{\abs{\Omega^\eps_t(\xi)}}{\abs{\xi}^4} 
\leq
\Lambda_\eps\, M_4( f_0 + g_0) ^{1-\eta}\,{\abs{d_0}}^{1+\eta}_2  \left( \frac{e^{-(1-\eta)\lambda t} - e^{(\lambda_K-K)t}}{K-\lambda_K - (1-\eta)\lambda}   \right)\\
+ r_\eps^K\left( \frac{1 - e^{(\lambda_K-K)t}}{K-\lambda_K}\right).
\end{aligned}
\end{equation}

Finally, we conclude by relaxing the cut-off parameter $K\to\infty$ and choosing $(1-\eta)\lambda \in (0,\bar\lambda_\eps)$ where $\bar \lambda_\eps$ is the same as in Lemma \ref{lem:A4a}, therefore we have
\begin{equation}\label{eq:weps}
{\abs{\w_t^\eps}}_4 \leq C_\eta \, \Lambda_\eps \, e^{-(1-\eta)\lambda t}\, M_4( f_0 + g_0) ^{1-\eta} \,{\abs{g_0-f_0}}^{1+\eta}_2 .
\end{equation}

We obtain the same estimation for $u_t^\eps$.

\medskip
\noindent
{\it Step 2.} 
Consider the solutions $f$, $g$ and $h$ of \eqref{equations-A4-0-Landau} .  

Let us choose $\lambda_1$ such that $0<(1-\eta)\lambda_1<  \bar \lambda$, where $\bar\lambda_\eps \to \bar\lambda\in(0,\infty)$ as $\eps\to 0$. Then we recover \eqref{eq:weps} with the exponent $\lambda_1$ that does not depend on $\eps$. Hence, passing to the limit $\eps\to 0$, we have $g^\eps-f^\eps-h^\eps \to g-f-h$ (grazing collisions limit \cite{Vi2}), and in the right-hand side of \eqref{eq:weps} we have $\Lambda_\eps \to \Lambda \in (0, \infty)$ (see \eqref{eq:grazing}). Then it follows
$$
|g_t-f_t - h_t|_4 \le C_\eta \, \Lambda \, e^{-(1-\eta)\lambda_1 t}\, M_4( f_0 + g_0) ^{1/3} \,{\abs{g_0-f_0}}^{1+\eta}_2.
$$ 

\end{proof}


\begin{lemma}\label{lem:A4c}
Consider $f_0,g_0\in  \PPP_{\GG_1,\bf r},\, {\bf r} \in \RRR_{\GG_1}$, and the solution $\psi_t$ of \eqref{equations-A4-Landau}.
There exists $\lambda_1\in(0,\infty)$ such that for any $\eta\in[2/3,1]$, there exists $C_{\eta}$ such that we have
\begin{equation}\label{eq:A4c}
\begin{aligned}
{\abs{g_t-f_t - h_t - u_t}}_6 &\leq C_{\eta} \, e^{-(1-\eta)\lambda_1 t} \, M_4( f_0 + g_0) ^{1/3} \,{\abs{g_0-f_0}}^{2+\eta}_2
\end{aligned}
\end{equation}

\end{lemma}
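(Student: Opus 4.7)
The proof will closely parallel that of Lemma \ref{lem:A4b}: I work first on the Boltzmann analogue $\psi_t^\eps := g_t^\eps - f_t^\eps - h_t^\eps - u_t^\eps$, where $(g^\eps,f^\eps,h^\eps,u^\eps)$ solve the $Q_\eps$-versions of \eqref{equations-A4-0-Landau} with grazing-collision kernel $b_\eps$ satisfying \eqref{eq:grazing}, establish an $\eps$-uniform bound on $|\psi^\eps|_6$, and then pass to the grazing-collision limit (\cite{Vi2}) to recover \eqref{eq:A4c}. The equation satisfied by $\psi^\eps$ is, in Fourier variables and after the cutoff \eqref{eq:cut-off}, of the form
\begin{equation*}
\partial_t \hat\psi^\eps = \widehat{Q_\eps^K}(s^\eps,\psi^\eps) + \widehat{Q_\eps^K}(h^\eps,\omega^\eps) + \widehat{Q_\eps^K}(u^\eps,d^\eps) + R^K_\eps,
\end{equation*}
and I will divide by $|\xi|^6$ as in the previous lemmas.

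Before that, I would first check that $|\psi^\eps_t|_6$ is well-defined, i.e.\ that $M_\alpha(\psi^\eps_t)=0$ for $|\alpha|\le 5$. Exactly as in \eqref{eq:Malpha}, the ODE for $M_\alpha(\psi^\eps_t)$ is driven by sources coming from $Q_\eps(h^\eps,\omega^\eps)$ and $Q_\eps(u^\eps,d^\eps)$; for Maxwellian molecules each such source is a sum of products $M_\beta(\cdot)M_{\alpha-\beta}(\cdot)$, and these vanish thanks to the nested cancellations $M_\alpha(d^\eps)=0$ for $|\alpha|\le 2$ (same momentum and energy), $M_\alpha(h^\eps)=0$ for $|\alpha|\le 1$ (linearized equation with zero mass/momentum datum), $M_\alpha(\omega^\eps)=0$ for $|\alpha|\le 3$ (Lemma \ref{lem:A4b}), and the analogous identity $M_\alpha(u^\eps)=0$ for $|\alpha|\le 3$ obtained by the same argument from $u^\eps_0=0$ and the cancellation of $M_\alpha(Q_\eps(h^\eps,h^\eps))$ for $|\alpha|\le 3$. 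A careful combinatorial check shows these together force $M_\alpha(\psi^\eps_t)=0$ for all $|\alpha|\le 5$.

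Next, the $Q_\eps(s^\eps,\psi^\eps)$ contribution yields the linear coefficient $\lambda_K^{(6)} := \int b_\eps^K \tfrac12\bigl(|\hat\xi^+|^6+|\hat\xi^-|^6\bigr)/|\xi|^6\,d\sigma$; a direct trigonometric computation gives $K - \lambda_K^{(6)} \to \tfrac34\int b_\eps(1-\cos^2\theta)\,d\sigma =: \bar\lambda^{(6)}_\eps \in (0,\infty)$ as $K\to\infty$, which is finite by \eqref{eq:Bmaxwell} and converges to a strictly positive constant as $\eps\to 0$. The source terms $Q_\eps(h^\eps,\omega^\eps)$ and $Q_\eps(u^\eps,d^\eps)$ are estimated by factoring out $|h^\eps|_2,|\omega^\eps|_4$ and $|u^\eps|_4,|d^\eps|_2$ respectively, leaving an angular integral
\begin{equation*}
\int b_\eps^K(\sigma\cdot\hat\xi)\,\frac{|\xi^+|^2|\xi^-|^4 + |\xi^+|^4|\xi^-|^2}{|\xi|^6}\,d\sigma
= \tfrac14 \int b_\eps^K(\sigma\cdot\hat\xi)\,(1-\cos^2\theta)\,d\sigma \le C\,\Lambda_\eps,
\end{equation*}
which is uniformly bounded in $K$. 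Plugging in the exponential bounds from Lemmas \ref{lem:A4a}--\ref{lem:A4b} bounds the whole source by $C_\eta\,\Lambda_\eps\,e^{-(1-\eta)\lambda t}\,M_4(f_0+g_0)^{2/3}\,|g_0-f_0|_2^{2+\eta}$. The remainder $R^K_\eps/|\xi|^6$ is controlled as in Lemmas \ref{lem:A2i}--\ref{lem:A4b}, using the vanishing of moments of $\psi^\eps$ up to order $5$ together with the integrability $\int b_\eps^C(1-\cos\theta)^{1/2}d\sigma<\infty$, which gives $|R^K_\eps|\le r_\eps^K|\xi|^6$ with $r_\eps^K\to 0$ as $K\to\infty$.

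A Gronwall argument on the resulting scalar ODE, followed by $K\to\infty$ and the choice $(1-\eta)\lambda_1\in(0,\bar\lambda^{(6)})$ (uniform in small $\eps$ since $\bar\lambda^{(6)}_\eps\to\bar\lambda^{(6)}>0$), then yields the $\eps$-uniform estimate $|\psi^\eps_t|_6 \le C_\eta\,\Lambda_\eps\,e^{-(1-\eta)\lambda_1 t}\,M_4(f_0+g_0)^{1/3}\,|g_0-f_0|_2^{2+\eta}$; letting $\eps\to 0$ (with $\Lambda_\eps\to\Lambda$ absorbed into $C_\eta$) gives \eqref{eq:A4c}. The main obstacle I anticipate is the moment-vanishing verification in step one: unlike for $\omega$ or $u$ (where cancellations follow directly from vanishing moments of $d$ or $h$), the moments of $\psi$ require simultaneous cancellation between several coupled terms, and the range $|\alpha|\le 5$ is tight precisely because $\omega$ and $u$ only have vanishing moments up to order $3$.
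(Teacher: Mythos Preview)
Your proposal is correct and follows essentially the same strategy as the paper: first establish the vanishing of $M_\alpha(\psi^\eps)$ for $|\alpha|\le 5$, then run the cutoff/Fourier/Gronwall argument on $|\psi^\eps|_6$ with the linear coefficient (the paper calls it $\alpha_K$) and the source coefficient (the paper's $\beta_K$) exactly as you outline, and finally pass to the grazing limit. One cosmetic slip: to get the exponent $2+\eta$ together with only $M_4^{1/3}$ (not $M_4^{2/3}$), pair the non-decaying bound $|h^\eps_t|_2\le|d_0|_2$ from Lemma~\ref{lem:A2i} with the decaying bound on $|\omega^\eps_t|_4$ from Lemma~\ref{lem:A4b}, rather than using the decaying Lemma~\ref{lem:A4a} bound on both factors.
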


\begin{proof}[Proof of Lemma \ref{lem:A4c}]
We prove the lemma in two steps.

\medskip
\noindent
{\it Step 1.} 
Consider the solutions $g_t^\eps$, $f_t^\eps$ and $h_t^\eps$ of \eqref{eq:feps} and $u_t^\eps$ solution of \eqref{eq:reps}. Define $\psi_t^\eps :=g_t^\eps - f_t^\eps-h_t^\eps - u_t^\eps $ that satisfies

\begin{equation*}
\partial_t \psi^\eps =  Q_{\eps}(s^\eps,\psi^\eps) +  Q_{\eps}(h^\eps,\w^\eps)+ Q_{\eps}(u^\eps,d^\eps), 
\qquad \psi^\eps\vert_{t=0} =0.
\end{equation*}

First of all, let us prove that $\psi_t^\eps$ has moments equals to zero up to order $5$, more precisely, for $\alpha\in\N^d$,
\begin{equation}\label{}
\begin{aligned}
\forall \, |\alpha|\le 5, \quad
 M_\alpha (\psi_t^\eps) := \int_{\R^d} v^\alpha \, \psi_t^\eps(v) \, dv = 0.
\end{aligned}
\end{equation}
In fact, as in the proof of Lemma \ref{lem:A4b}, we can compute the $\alpha$-moment of $\psi$
\begin{equation}\label{}
\begin{aligned}
\forall \, |\alpha|\le 5, \quad 
\frac{d}{dt} M_\alpha (\psi_t^\eps) &= \sum_{\beta\le \alpha} a_{\alpha,\beta} \, M_{\beta}(\psi_t^\eps)M_{\alpha-\beta}(s_t^\eps) + \sum_{\beta\le \alpha} a_{\alpha,\beta} \, M_{\beta}(h_t^\eps)M_{\alpha-\beta}(\w_t^\eps)  \\
&+ \sum_{\beta\le \alpha} a_{\alpha,\beta} \, M_{\beta}(r_t^\eps)M_{\alpha-\beta}(d_t^\eps).
\end{aligned}
\end{equation}
Since
\begin{equation}\label{}
\begin{aligned}
\forall \, |\alpha|\le 2,\;& M_\alpha (h_t^\eps) = M_\alpha(d_t^\eps) = 0,\\
\forall \, |\alpha|\le 3,\;& M_\alpha (\w_t^\eps) = M_\alpha(r_t^\eps) = 0,  
\end{aligned}
\end{equation}
we deduce that
\begin{equation}\label{eq:Mpsi}
\begin{aligned}
\forall \, |\alpha|\le 5,\; \frac{d}{dt} M_\alpha (\psi_t^\eps) = \sum_{\beta\le \alpha} a_{\alpha,\beta} \, M_{\beta}(\psi_t^\eps)M_{\alpha-\beta}(s_t^\eps) 
\end{aligned}
\end{equation}
and we conclude thanks to $\psi_0=0$. Then $|\psi^\eps|_6$ is well defined.

Denoting $\Psi^\eps = \hat \psi^\eps$ and $U^\eps = \hat u^\eps$, we perform the same cut-off as in Lemmas \ref{lem:A2i}, \ref{lem:A4a} and \ref{lem:A4b}, and it gives the following equation for $\psi_t$
\begin{equation}\label{}
\begin{aligned}
&\frac{d}{dt}\frac{\abs{\Psi^\eps(\xi)}}{\abs{\xi}^6} + K \frac{\abs{\Psi^\eps(\xi)}}{\abs{\xi}^4}\\
&\qquad\leq \frac12 \sup_{\xi\in\R^d} \int_{\Sp^{d-1}} b^K_{\eps}(\sigma\cdot \hat \xi)
\left( \frac{\abs{\Psi^\eps(\xi^+)}\abs{S^\eps(\xi^-)}}{\abs{\xi}^6} + \frac{\abs{\Psi^\eps(\xi^-)}\abs{S^\eps(\xi^+)}}{\abs{\xi}^6}   \right)d\sigma \qquad (=:T_1)\\
&\qquad\quad
+\frac12\sup_{\xi\in\R^d} \int_{\Sp^{d-1}} b^K_{\eps}(\sigma\cdot \hat \xi)
\left( \frac{\abs{H^\eps(\xi^+)}\abs{\Omega^\eps(\xi^-)}}{\abs{\xi}^6} + \frac{\abs{H^\eps(\xi^-)}\abs{\Omega^\eps(\xi^+)}}{\abs{\xi}^6}   \right)d\sigma \qquad (=:T_2) \\
&\qquad\quad
+\frac12\sup_{\xi\in\R^d} \int_{\Sp^{d-1}} b^K_{\eps}(\sigma\cdot \hat \xi)
\left( \frac{\abs{U^\eps(\xi^+)}\abs{D^\eps(\xi^-)}}{\abs{\xi}^6} + \frac{\abs{U^\eps(\xi^-)}\abs{D^\eps(\xi^+)}}{\abs{\xi}^6}   \right)d\sigma \qquad (=:T_3)\\
&\qquad\quad
 + \frac{|R_\eps^K|}{|\xi|^6},
\end{aligned}
\end{equation}
where the remainder term
\begin{multline}
R_\eps^K(\xi) := \frac12 \int_{\Sp^{d-1}} b_\eps^C (\sigma\cdot \hat\xi) 
\Big[ {\Psi^\eps(\xi^+)}{S^\eps(\xi^-)} + {\Psi^\eps(\xi^-)}{S^\eps(\xi^+)}
+H^\eps(\xi^+) \Omega^\eps(\xi^-)\\
 + H^\eps (\xi^-) \Omega^\eps(\xi^+)
+U^\eps(\xi^+) D^\eps(\xi^-) + U^\eps (\xi^-) D^\eps(\xi^+)
\Big] d\sigma
\end{multline}
satisfies, for any $\xi\in\R^d$, $|R_\eps^K(\xi)|\leq r_\eps^K |\xi|^6$, with $r_\eps^K \to 0$ as $K\to\infty$, and $r_\eps^K$ depends on moments of order $6$ of $d^\eps$, $s^\eps$, $h^\eps$, $\w^\eps$, $u^\eps$ and $\psi^\eps$. It easily follows arguing as in Lemmas~\ref{lem:A4a} and \ref{lem:A4c}, using \eqref{eq:Mpsi} and the bounds on moments of order $6$.

We compute first $T_1$ using the fact that ${\norm{S^\eps}}_\infty \le 2$
$$
\begin{aligned}
T_1 &\leq \sup_{\xi\in\R^d}\frac{\abs{\Psi^\eps(\xi)}}{\abs{\xi}^6}
\sup_{\xi\in\R^d}\int_{\Sp^{d-1}} b^K_{\eps}(\sigma\cdot \hat \xi)
\left( \abs{\hat \xi^+}^6 + \abs{\hat \xi^-}^6    \right)d\sigma \\
&\leq \alpha_K \sup_{\xi\in\R^d}\frac{\abs{\Psi^\eps(\xi)}}{\abs{\xi}^6}.
\end{aligned}
$$
Let us analyse $\alpha_K$,
$$
\alpha_K = \int_{\Sp^{d-1}} b^K_{\eps}(\sigma\cdot \hat \xi)
\left( \abs{\hat \xi^+}^6 + \abs{\hat \xi^-}^6   \right)d\sigma = 
\int_{\Sp^{d-1}} b^K_{\eps}(\sigma\cdot \hat \xi)\frac{1}{4}
\left(  1 + 3(\sigma\cdot\hat \xi)^2   \right)d\sigma
$$
and we compute
$$
\begin{aligned}
\alpha_K - K &= - \int_{\Sp^{d-1}} b^K_{\eps}(\sigma\cdot \hat \xi)\frac{1}{(4/3)}
\left(  1 - (\sigma\cdot\hat \xi)^2   \right)d\sigma\\
&\xrightarrow[K\to\infty]{} - \int_{\Sp^{d-1}} b_{\eps}(\sigma\cdot \hat \xi)\frac{1}{(4/3)}
\left(  1 - (\sigma\cdot\hat \xi)^2   \right)d\sigma =: - \bar\alpha_\eps \in (-\infty,0)\\
&\xrightarrow[\eps\to 0]{}  - \bar\alpha \in (-\infty,0).
\end{aligned}
$$

Next, we compute $T_2$
$$
\begin{aligned}
T_2 &\leq \left( \sup_{\xi\in\R^d}\frac{\abs{H^\eps(\xi)}}{\abs{\xi}^2}\right)
\left( \sup_{\xi\in\R^d}\frac{\abs{\Omega^\eps(\xi)}}{\abs{\xi}^4}\right)
\sup_{\xi\in\R^d}\int_{\Sp^{d-1}} b^K_{\eps}(\sigma\cdot \hat \xi)\frac12
\left(  \frac{\abs{\xi^+}^2 \abs{\xi^-}^4}{\abs{\xi}^2 \abs{\xi}^4}
+\frac{\abs{\xi^+}^4 \abs{\xi^-}^2}{\abs{\xi}^4 \abs{\xi}^2}     \right)d\sigma \\
&\leq {\abs{h_t^\eps}}_2 {\abs{d_t^\eps}}_2 \sup_{\xi\in\R^d}\int_{\Sp^{d-1}} b^K_{\eps}(\sigma\cdot \hat \xi)\frac12
\left(  \abs{\hat \xi^-}^4 + \abs{\hat \xi^-}^2 \right)d\sigma\\
&\leq \beta_K e^{-(1-\eta)\lambda t}\,  M_4( f_0 + g_0)^{1-\eta} \, {\abs{h_0}}_2 {\abs{d_0}}^{1+\eta}_2 
\end{aligned}
$$
where we have used the estimates of Lemmas \ref{lem:A2i} and \ref{lem:A4b}. We compute $\beta_K$
$$
\begin{aligned}
\beta_K &= \int_{\Sp^{d-1}} b^K_{\eps}(\sigma\cdot \hat \xi)\frac12
\left(  \abs{\hat \xi^-}^4 + \abs{\hat \xi^-}^2 \right)d\sigma \\
&= 
\int_{\Sp^{d-1}} b^K_{\eps}(\sigma\cdot \hat \xi)\frac{1}{2}
\left(  1 -\sigma\cdot\hat \xi   \right)d\sigma 
-\int_{\Sp^{d-1}} b^K_{\eps}(\sigma\cdot \hat \xi)\frac{1}{8}
\left(  1 -(\sigma\cdot\hat \xi)^2   \right)d\sigma \\
&\xrightarrow[K\to\infty]{} \bar\Lambda_\eps :=\frac{\Lambda_\eps}{2} - \frac{\bar\lambda_\eps}{4}   \in (0, \infty)\\
&\xrightarrow[\eps\to 0]{} \bar\Lambda := \frac{\Lambda}{2} - \frac{\bar\lambda}{4} \in (0, \infty) ,
\end{aligned}
$$
 and we have the same estimate for $T_3$.

After these computations we obtain
\begin{equation*}\label{}
\begin{aligned}
\frac{d}{dt}\frac{\abs{\Psi^\eps(\xi)}}{\abs{\xi}^6} + K \frac{\abs{\Psi^\eps(\xi)}}{\abs{\xi}^6}
\leq \alpha_K \sup_{\xi\in\R^d} \frac{\abs{\Psi^\eps(\xi)}}{\abs{\xi}^6} +
 2\beta_K e^{-(1-\eta)\lambda t} \,  M_4( f_0 + g_0)^{1-\eta} \,{\abs{d_0}}^{2+\eta}_2  + r_\eps^K
\end{aligned}
\end{equation*}
and by Gronwall's lemma
\begin{equation*}\label{}
\begin{aligned}
\sup_{\xi\in\R^d}\frac{\abs{\hat \psi_t(\xi)}}{\abs{\xi}^6} \leq
2\beta_K  \,  M_4( f_0 + g_0)^{1-\eta} \,  {\abs{d_0}}^{2+\eta}_2  \left( \frac{e^{-(1-\eta)\lambda t} - e^{(\alpha_K-K)t}}{K-\alpha_K - (1-\eta)\lambda}   \right) 
+ r_\eps^K\left( \frac{1 - e^{(\alpha_K-K)t}}{K-\alpha_K}\right).
\end{aligned}
\end{equation*}
We conclude by relaxing the cut-off parameter $K\to\infty$ and choosing $(1-\eta)\lambda \in (0,\bar\alpha_\eps)$, therefore we have
\begin{equation}\label{eq:psieps}
{\abs{\psi_t^\eps}}_6 \leq C_\eta\, \bar\Lambda_\eps \, e^{-(1-\eta)\lambda t} \,  M_4( f_0 + g_0)^{1-\eta} \,{\abs{d_0}}^{2+\eta}_2 .
\end{equation}

\medskip
\noindent
{\it Step 2.} 
Consider the solutions $f$, $g$, $h$ and $r$ of \eqref{equations-A4-0-Landau} .  

Let us choose $\lambda_1$ such that $0<(1-\eta)\lambda_1<  \bar \alpha$, where $\bar\alpha_\eps \to \bar\alpha\in(0,\infty)$ as $\eps\to 0$. Then we recover \eqref{eq:psieps} with the exponent $\lambda_1$ with does not depend on $\eps$. Hence, passing to the limit $\eps\to 0$, we have $g^\eps-f^\eps-h^\eps-u^\eps \to g-f-h-u$ (grazing collisions limit \cite{Vi2}), and in the right-hand side of \eqref{eq:psieps} we have $\bar\Lambda_\eps \to \bar\Lambda$. Then
$$
|g_t - f_t - h_t - r_t|_6 \le C_\eta \, \bar\Lambda \, e^{-(1-\eta)\lambda_1 t}\, M_4( f_0 + g_0) ^{1/3} \,{\abs{g_0-f_0}}^{1+\eta}_2.
$$

\end{proof}

Therefore the semigroup of the Landau equation 
$$
S^{\infty}_t \in C^{2,\eta}_{\Lambda_2} \cap C^{1,(1+2\eta)/3}_{\Lambda_2} \cap C^{0,(2+\eta)/3}_{\Lambda_2}( \PPP_{\GG_1,{\bf r}}; \PPP_{\GG_2}),
$$
where $\PPP_{\GG_2}$ is defined as $\PPP_{\GG_1}$ but endowed with the distance associated to the norm $\|\cdot\|_{\GG_2} = \Nt \cdot \Nt_{6}$ (see Definitions~\ref{def:toscani} 
and \ref{def:general-toscani}), with $\Lambda_2(f) : = M_4(f)^{1/3} = \Lambda_1(f)^{1/3}$. Moreover there exists a constant $C_4 >0$ such that one has
\beqn\label{eq:Sinfty}
\sup_{{\bf r} \in \RRR_{\GG_1}} \int_0^\infty \left( 
\left[ S^{\infty}_t \right]_{C^{2,0}_{\Lambda_2}}
+ \left[ S^{\infty}_t \right]_{C^{1,0}_{\Lambda_2}}^2   
 \right) \,dt \leq C_{4},
\eeqn
which proves Assumption {\bf (A4)}.

\begin{rem}In fact, we can deduce that 
$$
\sup_{{\bf r} \in \RRR_{\GG_1}} \int_0^\infty \left( 
\left[ S^{\infty}_t \right]_{C^{1,(1+2\eta)/3}_{\Lambda_2}}
+ \left[ S^{\infty}_t \right]_{C^{0,(2+\eta)/3}_{\Lambda_2}}^2+
\left[ S^{\infty}_t \right]_{C^{2,0}_{\Lambda_2}}
+ \left[ S^{\infty}_t \right]_{C^{1,0}_{\Lambda_2}}^2   
 \right) \,dt \leq C_{4}.
$$
However, coming back to the proof of Theorem~\ref{thm:abstract} and from the proof of {\bf (A3)} in Lemma~\ref{lem:A3}, we observe that we only need $[\Phi]_{C^{2,0}}$ instead of $[\Phi]_{C^{1,\eta}} + [\Phi]_{C^{2,0}}$, so that $\eqref{eq:Sinfty}$ is sufficient.
\end{rem}

\subsection{Proof of assumption (A5)}

We define the space of probability measures $\PPP_{\GG_3} := \PPP_2(\R^d) = \{ f\in \PPP(\R^d);\; M_2(f) < \infty \}$ endowed with the distance $\dist_{\GG_3}=W_2$, and the constraints associated to the momentum and energy: ${\bf m}_{\GG_3}(v) = (|v|^2,v)$ and $\RRR_{\GG_3} = \{(r,\bar r)\in \R_+ \times \R^d ;\; r = |\bar r|^2\}$, so that $\PPP_{\GG_3, \mathbf r} = \{ f \in \PPP_2(\R^d); \; \langle f , |v|^2 \rangle = r, \, \langle f , v_\alpha \rangle = \bar r_\alpha \text{ for } \alpha= 1, \dots, d \}$ for any $\mathbf r \in \mathbf R_{\GG_3}$. 
The following lemma proves {\bf (A5)} with $\FF_3 = \mathrm{Lip}(\R^d)$.

\begin{lemma}\label{lemma:A5}
Let $f_0,g_0$ have the same momentum and energy, and consider $f_t=S_t^{\infty} (f_0)$, $g_t=S_t^{\infty} (g_0)$ the respective solutions to the Landau equation with Maxwellian molecules. Then
\begin{equation}
\sup_{t\geq 0} W_2(f_t,g_t)\leq W_2(f_0,g_0).
\end{equation}
\end{lemma}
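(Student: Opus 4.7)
The plan is to obtain the $W_2$ contraction for the Landau equation as a consequence of the corresponding classical property for the Boltzmann equation with Maxwellian molecules (Tanaka's theorem), and then to pass to the grazing collisions limit, consistently with the strategy already used in Section~\ref{sec:app} (Lemmas \ref{lem:A2i}, \ref{lem:A4a}, \ref{lem:A4b}, \ref{lem:A4c}).

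First, for each $\eps>0$, let $b_\eps$ be a Maxwellian molecule kernel satisfying the grazing collisions assumption \eqref{eq:grazing}, and denote by $f^\eps_t$ and $g^\eps_t$ the solutions of the Boltzmann equation \eqref{eq:Boltzmann} with kernel $b_\eps$ and initial data $f_0$ and $g_0$ respectively. By Tanaka's classical theorem for the spatially homogeneous Boltzmann equation with Maxwellian molecules, one has
\begin{equation*}
\sup_{t\geq 0} W_2(f^\eps_t, g^\eps_t) \leq W_2(f_0,g_0).
\end{equation*}
The proof of this fact proceeds by taking an optimal $W_2$-coupling $\pi_0 \in \Pi(f_0,g_0)$ at initial time and constructing two nonlinear jump processes driven by a common Poisson random measure, coupled optimally at each collision event; the key algebraic identity
\begin{equation*}
\int_{\Sp^{d-1}} |v'-w'|^2 \, b_\eps(\cos\theta)\,d\sigma \ +\ \int_{\Sp^{d-1}} |v'_*-w'_*|^2\, b_\eps(\cos\theta)\,d\sigma
=  \Bigl(\int_{\Sp^{d-1}} b_\eps\,d\sigma\Bigr)\bigl(|v-w|^2+|v_*-w_*|^2\bigr),
\end{equation*}
valid for Maxwellian molecules via the collision formulas \eqref{eq:velocities}, shows that the expected squared distance is non-increasing in time.

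Next, I would pass to the grazing collisions limit $\eps \to 0$. By Villani's results used throughout Section~\ref{sec:app} (see \cite{Vi2}), the flows $f^\eps_t$ and $g^\eps_t$ converge narrowly to $f_t$ and $g_t$ respectively, for every $t\geq 0$. Since both the Boltzmann equation with Maxwellian molecules and the Landau equation with Maxwellian molecules preserve momentum and energy, the second moments of $f^\eps_t$ and $g^\eps_t$ are equal to those of $f_0$ and $g_0$ for every $\eps>0$ and every $t\geq 0$, hence uniformly bounded in $\eps$.

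Finally, the Wasserstein distance $W_2$ is lower semi-continuous under narrow convergence of probability measures with uniformly bounded second moments. Combining this with the two previous steps yields
\begin{equation*}
W_2(f_t,g_t) \leq \liminf_{\eps\to 0} W_2(f^\eps_t, g^\eps_t) \leq W_2(f_0,g_0),
\end{equation*}
uniformly in $t\geq 0$, which is the desired estimate. The only non-trivial ingredient is Tanaka's theorem, whose proof genuinely uses the Maxwellian structure; the grazing passage and the lower semicontinuity are routine given the uniform energy bound, and therefore should not present any real obstacle.
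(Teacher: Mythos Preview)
Your proposal is correct and follows essentially the same route as the paper: cite Tanaka's $W_2$ contraction for the Boltzmann equation with Maxwellian molecules, then pass to the grazing collisions limit using Villani's convergence result together with conservation of energy. The only cosmetic difference is that the paper uses the triangle inequality combined with the characterisation ``$W_2$-convergence $\Leftrightarrow$ narrow convergence plus convergence of second moments'' to get $W_2(f^\eps_t,f_t)\to 0$, whereas you invoke lower semicontinuity of $W_2$; both are routine. One small caveat: the algebraic identity you wrote for Tanaka's coupling is ill-posed as stated since $\int_{\Sp^{d-1}} b_\eps\,d\sigma=\infty$ in the non-cutoff case, but since you are citing Tanaka's theorem as a known result this does not affect the argument.
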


\begin{proof}[Proof of Lemma \ref{lemma:A5}]

Consider $f^\eps_t,g^\eps_t$ the solutions of the Boltzmann equation with kernel $b_\eps$ satisfying the grazing collisions \eqref{eq:grazing} and with initial data $f_0$ and $g_0$, respectively. We know from \cite{tanaka} that
$$
\sup_{t\geq 0} W_2(f^\eps_t,g^\eps_t)\leq W_2(f_0,g_0)
$$
We know also from \cite{Vi2} that $f^{\varepsilon}_t$ converges weakly in $L^1$ to a weak solution $f_t$ of the Landau equation (grazing collisions limit).
Moreover, both equations conserve energy so we have, for all $\varepsilon >0$,
$$
\int |v|^2 f^{\varepsilon}_t(v)\,dv = \int |v|^2 f_t(v)\,dv = \int |v|^2f_0(v)\,dv .
$$
Using the fact that the Wasserstein distance $W_2$ is equivalent to the weak convergence in $\PPP(\R^d)$ plus the convergence of the second order moment (see \cite{VillaniOTO&N}), and writing
$$
\begin{aligned}
W_2(f_t,g_t) &\leq W_2(f_t,f_t^{\varepsilon}) + W_2(f_t^{\varepsilon},g_t^{\varepsilon}) + W_2(g_t,g_t^{\varepsilon}) ,
\end{aligned}
$$
we obtain passing to the limit $\varepsilon\to 0$ that $W_2(f_t,f_t^{\varepsilon})\to 0$, 
$W_2(g_t,g_t^{\varepsilon})\to 0$ and hence
$$
\sup_{t\geq 0} W_2(f_t,g_t)\leq W_2(f_0,g_0).
$$
\end{proof}


\subsection{Proof of Theorem \ref{thm:Wchaos}}\label{ssec:Wchaos}
The proof is a consequence of Theorem~\ref{thm:chaos}, some results on different forms of measuring chaos from \cite{HaurayMischler} and quantitative estimates on the chaoticity of initial data from \cite{KC}.

\begin{proof}[Proof of Theorem~\ref{thm:Wchaos} (1)]
Thanks to Theorem~\ref{thm:chaos}, taking $\ell=2$, we have for all $\phi=\phi_1\otimes\phi_2 \in \FF^{\otimes 2}$ that
$$
\sup_{t\ge 0} \frac{|\left\langle \Pi_2(F^N_t) - f^{\otimes 2}_t ,  \phi \right\rangle |}{\|\phi\|_\FF}
\le C \left( \WW_{W_2}\left(\pi^N_P F^N_0,\delta_{f_0} \right) + \frac1N \right),
$$
where we recall that $\|\phi\|_\FF = \int (1+|\xi|^6)|\hat \phi(\xi)|$. Then we observe that, for $r>0$, applying Cauchy-Schwarz inequality,
$$
\bal
\|\phi_1\|_\FF &= \int (1+|\xi|^6)(1+|\xi|^2)^{r/2} \, |\hat \phi_1(\xi)| (1+|\xi|^2)^{-r/2} \, d\xi \\
&\le C \left( \int (1+|\xi|^2)^{6+r} \,|\hat \phi_1(\xi)|^2  \right)^{1/2}
      \left( \int (1+|\xi|^2)^{-r}   \right)^{1/2}.      
\eal
$$
The first integral in the right-rand side is the norm $\|\phi_1\|_{H^{6+r}}$ and the second one is finite if $2r>d$. We have then $H^s \subset \FF$ for $s>6+d/2$ which implies 
\begin{equation}\label{eq:chaosHs}
\begin{aligned}
\sup_{t\ge 0} {\Norm{\Pi_2(F^N_t) - f^{\otimes 2}_t}}_{H^{-s}} 
&\le C \left( \WW_{W_2}\left(\pi^N_P F^N_0,\delta_{f_0} \right) + \frac1N \right).
\end{aligned}
\end{equation}
Let us denote $M_k = M_k(\Pi_2(F^N_t)) + M_k(f^{\otimes 2}_t)$.
Thanks to \cite{HaurayMischler}, for any $0<\alpha<k(dk+d+k)^{-1}$ there exists $C:=C(\alpha,d,s,M_k)$ such that 
\begin{equation*}
\frac{W_1(F^N_t,f^{\otimes N}_t)}{N} \le C\left( {\Norm{\Pi_2(F^N_t) - f^{\otimes 2}_t}}_{H^{-s}}^{\frac{\alpha k}{d+ks}} + N^{-\frac{\alpha}{2}} \right),
\end{equation*}
which implies with \eqref{eq:chaosHs}
\begin{equation}\label{eq:W1}
\frac{W_1(F^N_t,f^{\otimes N}_t)}{N} \le C\left( \WW_{1,W_2}(\pi^N_p F^N_0, f^{\otimes N}_0)^{\frac{\alpha k}{d+ks}} + N^{-\frac{\alpha}{2}} \right).
\end{equation}
Now, we have just to estimate the first term of the right-hand side of \eqref{eq:chaosHs}.

We have from \cite[Proof of Theorem 8]{KC} that for any $0< \beta < (7d+6)^{-1}$ there exists $C=C(\beta)$ such that
$$
\WW_{1,W_2}(\pi^N_P F^N_0, \delta_{f_0}) \le C\, N^{-\beta}.
$$
We assumed that $M_6(f_0)$ is finite, which implies by construction that $M_6(\Pi_2(F^N_0))$ is also finite. Then, for all $t\ge 0$ we have $M_6(f_t)$ finite (see \cite{Vi1}) and $M_6(\Pi_2(F^N_t))$ also finite (see Lemma~\ref{lem:A1ii}). We can conclude gathering the last equation with \eqref{eq:W1}, $k=6$ and $s>6+d/2$.
\end{proof}

Using this result, we can prove the second part of the theorem following \cite{MMchaos}.

\begin{proof}[Proof of Theorem~\ref{thm:Wchaos} (2)]
We split the expression into
$$
\frac{W_1(F^N_t, \gamma^N)}{N} = \frac{W_1(F^N_t, f_t^{\otimes N})}{N} + \frac{W_1(\gamma^{\otimes N}, \gamma^N)}{N} + W_1(f_t,\gamma),
$$
where $\gamma$ is the equilibrium Gaussian probability with zero momentum and energy $\EE = \int |v|^2 d\gamma$. For the first term we have from point {\it (1)} that for all $\epsilon < 9[(7d+6)^2(d+9)]^{-1}$ there exists $C_\epsilon$ such that
$$
\frac{W_1(F^N_t, f_t^{\otimes N})}{N} \le C_\epsilon \, N^{-\epsilon}
$$
The second term can be estimated by \cite[Theorem 18]{KC}
$$
\frac{W_1(\gamma^{\otimes N}, \gamma^N)}{N} \le C N^{-\theta},
$$
for some $\theta > \eps$.
For the third term, thanks to \cite[Theorem 6]{Vi1} we have 
$$
W_1(f_t,\gamma) \le \| (f_t - \gamma)\langle v \rangle \|_{L^1} \le C e^{-\lambda t}.
$$
for contants $C>0$ and $\lambda>0$.
Finally, putting together these estimates it follows
\beqn\label{eq:W1a}
\frac{W_1(F^N_t, \gamma^N)}{N} \le C'_\eps ( N^{-\epsilon} + e^{-\lambda t}).
\eeqn

Moreover, consider $h^N_t$ the Radon-Nikodym derivative of $F^N_t$ with respect to $\gamma^N$, i.e. $h^N_t = dF^N_t/ d\gamma^N$. Thanks to \cite{KL}, for all $N\in \N^*$ and $t\ge 0$, it holds
$$
{\| h^N_t - 1 \|}_{L^2(\SS^N(\EE),d\gamma^N)} \le e^{-\lambda_1 t}{\| h^N_0 - 1 \|}_{L^2(\SS^N(\EE),d\gamma^N)},
$$
where $\lambda_1>0$. Since $F^N_0 = [f^{\otimes N}_0]_{\SS^N(\EE)}$ and $f_0 \in \PPP_6(\R^d)$, it is possible to bound the right-hand side by
$$
{\| h^N_0 - 1 \|}_{L^2(\SS^N(\E),d\gamma^N)} \le A^N,
$$
with $A>1$ that depends on $f_0$. Hence we deduce, with $\phi: \R^{dN}\to \R$,
$$
\bal
W_1(F^N_t, \gamma^N) &= \sup_{\| \phi \|_{C^{0,1}} \le 1} \int_{\R^{dN}} \phi (dF^N - d\gamma^N) \\
&\le   \int_{\R^{dN}}  \sum_{j=1}^N |v_j| \left|dF^N - d\gamma^N\right|\\
&\le N \EE^{1/2} {\| h^N_t - 1 \|}_{L^1(\SS^N(\EE),d\gamma^N)} \\
&\le N \EE^{1/2} {\| h^N_t - 1 \|}_{L^2(\SS^N(\EE),d\gamma^N)},
\eal
$$
which implies
\beqn\label{eq:W1b}
\frac{W_1(F^N_t, \gamma^N)}{N} \le A^N e^{-\lambda_1 t}.
\eeqn
Define $N(t)$ by $N(t) := \lambda_1 t\, (2\log A)^{-1}$ for some $\delta>0$. Then, choosing \eqref{eq:W1a} for $N> N(t)$ and \eqref{eq:W1b} for $N\le N(t)$ it yields, for all $N\in \N^*$ and $t\ge 0$,
$$
\frac{W_1(F^N_t,\gamma^N)}{N} \le p(t) := \min\left\{ C'_\eps \left(N(t)^{-\eps} + e^{-\lambda t} \right)  , e^{-\frac{\lambda_1}{2} t}    \right\},
$$
with a polynomial function $p(t)\to 0$ as $t\to\infty$.

\end{proof}

%
%
%


\section{Entropic chaos}\label{sec:ent}

We can define the master equation \eqref{Lmaster2} on $\R^{dN}$ or $\SS^N(\EE)$ thanks to the conservation of momentum and energy, hence for $g^N \in \Psym(\R^{dN})$ and $f^N \in \Psym(\SS^N(\EE))$ we have
\bear
\partial_t \left\langle g^N , \psi \right\rangle &=& \left\langle g^N , G^N\psi \right\rangle , \qquad \forall\, \psi\in C^2_b(\R^{dN}) \label{eq:R} \\
\partial_t \left\langle f^N , \phi \right\rangle &=& \left\langle f^N , G^N\phi \right\rangle , \qquad \forall\, \phi\in C^2_b(\SS^N(\EE)) \label{eq:S},
\eear
where $G^N$ is given by \eqref{Lmaster2}.

Suppose that $g^N$ is absolutely continuous with respect to the Lebesgue measure 
(and we still denote by $g^N$ its Radon-Nikodym derivative). Taking $\psi = \log g^N$ in \eqref{eq:R}, we obtain an equation for the entropy of $g^N$, i.e. $H(g^N) := \int_{\R^{dN}} g^N \log g^N \, dV$,
\beqn\label{eq:entR}
\begin{aligned}
&\frac{d}{dt}\int_{\R^{dN}} g^N \log g^N \, dV \\
&\qquad= -\frac{1}{2N} \, \sum_{i,j} \int_{\R^{dN}} a(v_i-v_j) \left(\frac{\nabla_i g^N}{g^N} - \frac{\nabla_j g^N}{g^N}\right)\cdot\left(\frac{\nabla_i g^N}{g^N} - \frac{\nabla_j g^N}{g^N} \right) g^N \,dV \le 0,
\end{aligned}
\eeqn
since $a$ is nonnegative.

Considering now $f^N$ absolutely continuous with respect to $\gamma^N$, the uniform probability measure on $\SS^N(\EE)$, and denoting by $h^N := df^N/d\gamma^N$ its derivative, we want to obtain the equation satisfied by the relative entropy of $f^N$ with respect to $\gamma^N$, given by
\beqn\label{rel-ent}
H(f^N | \gamma^N) := \int_{\SS^N} h^N \log h^N \, d\gamma^N.
\eeqn
For this purpose we could take $\phi = \log h^N$ in \eqref{eq:S}, but we have to give a meaning to
$\nabla_i h^N$ for a function $h^N$ defined on $\SS^N$.

Let us consider $h$ a function on $\SS^N(\EE)$ and we define $\widetilde h$ on $\R^{dN}$ by
\begin{equation}\label{eq:tildeh}
\widetilde h (V) = \rho(\EE(V))\, \eta(\MM(V)) \, h\left(\EE\, \frac{V-\MM(V)}{\EE(V)}\right),\qquad \forall\, V \in \R^{dN}.
\end{equation}
where $\EE(V) = N^{-1}\sum_{i=1}^N |v_i-\MM(V)|^2$, $\MM(V) = N^{-1}\sum_{i=1}^N v_i$ and the functions $\rho$ and $\eta$ are smooth.

Denoting by $\nabla_{\SS^N}$ the gradient with respect to $\SS^N(\EE)$ and by $\nabla_{\perp}$ the gradient with respect to its orthogonal space $(\SS^N)^{\perp}$, we can decompose the 
gradient on $\R^{dN}$ 
\beqn\label{grad}
\nabla_{\R^{dN}} \widetilde h = \nabla_{\perp} \widetilde h + \nabla_{\SS^N} \widetilde h
=(\nabla_{\perp} \rho\eta)\, h+  \rho\eta\, \nabla_{\SS^N}  h = (\nabla_{\perp} \log(\rho\eta))\,\widetilde  h+  \rho\eta\, \nabla_{\SS^N}  h .
\eeqn
For $\widetilde h$ we can compute $\nabla_i \widetilde h \in \R^d$, for $1\le i\le N$, as 
$$
\nabla_i \widetilde h =\left( \partial_{v_{i,\alpha}} \widetilde h    \right)_{1\le \alpha\le d} = \left( \nabla_{\R^{dN}} \widetilde h \cdot e_{i,\alpha}   \right)_{1\le \alpha\le d},
$$
where $(e_{i,\alpha})_{j,\beta} = \delta_{ij}\delta_{\alpha\beta} \in \R^{dN}$. Hence by \eqref{grad}, for all $1\leq i\leq N$ and all $1\le \alpha \le d$,
$$
\partial_{v_{i,\alpha}} \widetilde h  = (\nabla_{\perp} \log(\rho\eta) \cdot e_{i,\alpha} )\,\widetilde  h+  \rho\eta\, (\nabla_{\SS^N}  h\cdot e_{i,\alpha}).
$$
Now, observing that $(\nabla_{\perp} \log(\rho\eta) \cdot (e_{i,\alpha}-e_{j,\alpha}) )_{1\le \alpha\le d}$ is proportional to $(v_i-v_j)$ and using that $a(z)z=0$ for all $z\in\R^d$, we can evaluate the expression
$$
a(v_i-v_j) \left({\nabla_i \widetilde h} - {\nabla_j \widetilde h}\right)\cdot\left({\nabla_i \widetilde h} - {\nabla_j \widetilde h} \right) = 
(\rho\eta)^2 \, a(v_i-v_j) (\nabla_{\SS^N_i}  h - \nabla_{\SS^N_j}  h)\cdot(\nabla_{\SS^N_i}  h - \nabla_{\SS^N_j}  h ),
$$
where we define
\beqn\label{eq:gradSi}
\nabla_{\SS^N_i}  h = \left( \nabla_{\SS^N} h \cdot e_{i,\alpha}  \right)_{1\le \alpha\le d}.
\eeqn

Since we have the following Fubini-like theorem for Boltzmann's spheres (see \cite{Einav,KC})
\beqn\label{fubbini}
\bal
&\int_{\R^{dN}} \rho(\EE(V)) \,\eta(\MM(V)) \, A\left(h\left(\EE\, \frac{V-\MM(V)}{\EE(V)}\right)\right) \, dV \\
&\qquad= 
\left(\int_{\R^+\times \R^d} B\left(\rho(\EE),\eta(\MM)\right)\, d\EE\, d\MM \right) \left(\int_{\SS^N(\EE)} A(h) \, d\gamma^N\right),
\eal
\eeqn
for some functions $A$ and $B$, thanks to \eqref{eq:entR} with $h = h^N$ and $\widetilde h = g^N$, we obtain the equation for the relative entropy $H(f^N | \gamma^N)$,
\beqn\label{eq:RelativeEntropy}
\begin{aligned}
&\frac{d}{dt}\int_{\SS^N(\EE)} h^N \log h^N \, d\gamma^N  \\
&\qquad= -\frac{1}{2N} \, \sum_{i,j} \int_{\SS^N(\EE)} a(v_i-v_j) \left(\frac{\nabla_{\SS^N_i} h^N}{h^N} - \frac{\nabla_{\SS^N_j} h^N}{h^N}\right)\cdot\left(\frac{\nabla_{\SS^N_i} h^N}{h^N} - \frac{\nabla_{\SS^N_j} h^N}{h^N} \right) h^N \,d\gamma^N \\
&\qquad =: - D^N(F^N)\le 0,
\end{aligned}
\eeqn
and $D^N$ is called the entropy-production functional. 
This implies
\begin{equation}\label{ent1}
\frac1N \, H(F^N_t | \gamma^N) + \int_0^t \frac1N\, D^N(F^N_s)\,ds 
=\frac1N \, H(F^N_0 | \gamma^N) .
\end{equation}
Moreover for the limit equation we have \cite{Vi1}
$$
\begin{aligned}
\frac{d}{dt} H(f) &:= \frac{d}{dt}\int  f \log f \, dv =: D(f) \\
&= 
-\frac{1}{2}\int ff_* \,a(v-v_*)
\left( \frac{\nabla f}{f} -  \frac{\nabla_{\!*} f_*}{f_*} \right)
\cdot \left( \frac{\nabla f}{f} -  \frac{\nabla_{\!*} f_*}{f_*} \right) \,dv \,dv_* ,
\end{aligned}
$$
and then for the relative entropy $H(f|\gamma) = \int (f/\gamma)\log(f/\gamma)\,\gamma(dv)$ we obtain
\begin{equation}\label{ent2}
H(f_t|\gamma) + \int_0^t D(f_s) \, ds = H(f_0|\gamma).
\end{equation}

We are able now to prove the following result, which will be useful in the sequel.

\begin{lemma}\label{lem:semicont}
If $F^N$ is $f$-chaotic, then 
$$
H(f|\gamma) \le \liminf_{N\to\infty} \frac1N\, H(F^N | \gamma^N) 
\quad \text{and}\quad
D(f) \le \liminf_{N\to\infty} \frac1N\, D^N(F^N).
$$
\end{lemma}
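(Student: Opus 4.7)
The plan is to prove both inequalities through convex-duality representations of $H$ and $D$ as suprema of linear (respectively affine) functionals of the underlying measure, tested against one- or two-particle bounded test functions. Lower semi-continuity then reduces to passing to the limit in the $N$-particle side along such test functions, where Kac chaos directly supplies the required convergence of the one- and two-particle marginals $F^N_1 \wto f$ and $F^N_2 \wto f^{\otimes 2}$.

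For the entropy inequality, I would apply the Donsker--Varadhan variational formula
$$
H(F^N|\gamma^N) = \sup_{\Phi \in C_b(\SS^N(\EE))} \left\{ \langle F^N,\Phi \rangle - \log \langle \gamma^N, e^\Phi \rangle \right\},
$$
restricted to additive test functions $\Phi(V) = \sum_{i=1}^{N} \phi(v_i)$ with $\phi \in C_b(\R^d)$. Divided by $N$, the first term equals $\langle F^N_1, \phi \rangle$, which converges to $\langle f, \phi \rangle$ by chaoticity, while the second term
$$
\frac{1}{N}\log \int_{\SS^N(\EE)} e^{\sum_i \phi(v_i)} \, d\gamma^N
$$
converges to $\log \langle \gamma, e^\phi \rangle$ by an equivalence-of-ensembles statement for the uniform measure on Boltzmann's sphere relative to the Gaussian $\gamma$, a consequence of the sphere calculus developed in \cite{KC} and \cite{HaurayMischler}. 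Taking the $\liminf$ and then the supremum over $\phi \in C_b(\R^d)$ recovers the one-particle Donsker--Varadhan identity for $H(f|\gamma)$.

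For the entropy-production inequality, I would use the elementary identity $(a\xi,\xi) = \sup_w \{ 2(a\xi,w) - (aw,w) \}$, valid for any symmetric nonnegative matrix $a$. Applied pointwise with $\xi = \nabla f/f - \nabla_* f_*/f_*$ and with smooth compactly supported antisymmetric vector fields $\psi(v,v_*)=-\psi(v_*,v)$, and after one integration by parts that eliminates $\nabla f$ and $\nabla_* f_*$ in favour of $ff_*$, this yields
$$
D(f) = \sup_{\psi} \iint \left\{ 2\,\mathcal{B}[\psi](v,v_*) - (a(v-v_*)\psi,\psi) \right\} f(v) f(v_*)\, dv\, dv_*,
$$
where $\mathcal{B}[\psi]$ is a first-order differential expression in $\psi$ only. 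Applying the same Legendre identity on the sphere to the sphere-gradient form of $D^N$ given by \eqref{eq:RelativeEntropy}, tested against $\Psi^N(V) = \sum_{i\neq j} \psi(v_i,v_j)$, and using the Fubini formula \eqref{fubbini} together with \eqref{grad} to pass from the sphere gradients $\nabla_{\SS^N_i}$ to the full $\R^{dN}$ gradients modulo lower-order terms, produces a lower bound for $N^{-1}D^N(F^N)$ that depends only on $F^N_2$. Kac chaos then allows passage to the limit, and taking $\sup_\psi$ yields $D(f) \le \liminf_N N^{-1}D^N(F^N)$.

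The main technical obstacle is the large-$N$ asymptotics on Boltzmann's sphere: the equivalence-of-ensembles limit for the one-particle log-partition function, and its analogue for $D^N$, where one must control carefully the sphere gradients $\nabla_{\SS^N_i}$ versus the full $\R^{dN}$ gradients and verify that the constraint-induced correction terms are of lower order in $N$. Both steps follow from the sphere calculus and the quantitative chaos estimates established in \cite{KC,HaurayMischler}.
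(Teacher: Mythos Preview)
Your plan is correct and, for the entropy-production inequality, follows the same route as the paper: Legendre duality $(a\xi,\xi)=\sup_w\{2(a\xi,w)-(aw,w)\}$, reduction to the two-particle marginal via symmetry, integration by parts to eliminate $\nabla\log h^N$, and passage to the limit through $F^N_2\wto f^{\otimes 2}$. Two organizational differences are worth noting. First, the paper uses symmetry \emph{before} the variational step, writing $\frac1N D^N(F^N)=\frac{N-1}{N}D^N_{12}(F^N)$ and then testing $D^N_{12}$ against a single $\Phi(V)=\varphi(v_1,v_2)$; this avoids the bookkeeping of your sum $\sum_{i\ne j}\psi(v_i,v_j)$ but is equivalent. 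Second, rather than passing through the extension $\widetilde h$ and \eqref{fubbini}--\eqref{grad}, the paper performs the integration by parts directly on $\SS^N(\EE)$ via an explicit formula from \cite[Lemma~22]{KC}, which produces two correction terms: one is $O(1/|V|^2)=O(1/N)$, and the other vanishes \emph{exactly} because $a(v_1-v_2)(v_1-v_2)=0$. This algebraic identity of the Landau matrix is the mechanism that kills the main constraint-induced term; your sketch flags these corrections as the obstacle but does not isolate this cancellation, and the extension route would make it harder to see. For the entropy inequality, the paper does not give an argument at all but cites \cite[Theorem~21]{KC}; your Donsker--Varadhan approach with additive test functions is a valid alternative, though it rests on the equivalence-of-ensembles limit $\frac1N\log\int_{\SS^N}e^{\sum_i\phi(v_i)}d\gamma^N\to\log\langle\gamma,e^\phi\rangle$, which is itself a nontrivial Sanov-type statement for $\gamma^N$.
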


\begin{proof}[Proof of Lemma \ref{lem:semicont}]
The lower semicontinuity property of the relative entropy is proved in \cite[Theorem 21]{KC}, thus we prove only the second inequality.

Let us denote $\nabla_{12} = \nabla_1 - \nabla_2$, $\nabla_{\SS^N_{12}} = \nabla_{\SS^N_1}-\nabla_{\SS^N_2}$, and for all $x,y,z \in \R^d$ we denote $a(z)xy = (a(z)x) \cdot y$. 
Since a is nonnegative, considering a function $\varphi : \R^{2d}\to\R^d$, we have
$$
a(v_1-v_2)\left(\nabla_{12}\log f_1f_2-\frac{\varphi}{2}\right)\left(\nabla_{12}\log f_1f_2-\frac{\varphi}{2}\right)\ge 0,
$$
which gives the following representation for $D(f)$,
$$
\bal
D(f) &= \frac12 \sup_{\varphi : \R^{2d}\to\R^d} \iint a(v_1-v_2) \left[ (\nabla_{12}\log f_1f_2)\varphi - \frac{\varphi\varphi}{4}\right] \, f_1 f_2\, dv_1\, dv_2 \\
&= \frac12 \sup_{\varphi : \R^{2d}\to\R^d} \iint  \left\{ -\nabla_{12}\cdot(a(v_1-v_2)\varphi) - a(v_1-v_2)\,\frac{\varphi\varphi}{4}\right\} \, f_1 f_2\, dv_1\, dv_2
\eal
$$
where $f_1 = f(v_1)$ and $f_2=f(v_2)$.
Let $\eps>0$ and choose $\varphi = \varphi(v_1,v_2) : \R^{2d}\to\R^d$ such that
$$
D(f) - \eps \le \frac12\iint  \left\{ - \nabla_{12}  (a(v_1-v_2) \varphi) - a(v_1-v_2)\,\frac{\varphi\varphi}{4} \right\} \, f_1 f_2\, dv_1\,dv_2.
$$
For the $N$-particle entropy-production $D^N$ defined in \eqref{eq:RelativeEntropy}, we have by symmetry
$$
\bal
&\frac1N D^N(F^N) \\
&\qquad
= \frac{N(N-1)}{N^2} \, \frac12\int_{\SS^N}a(v_1-v_2) 
\left( \frac{\nabla_{\SS^N_1} h^N}{h^N} - \frac{\nabla_{\SS^N_2} h^N}{h^N} \right) \cdot 
\left( \frac{\nabla_{\SS^N_1} h^N}{h^N} - \frac{\nabla_{\SS^N_2} h^N}{h^N} \right) h^N \, d\gamma^N \\
&\qquad
=: \frac{N(N-1)}{N^2} D^N_{12}(F^N),
\eal
$$
and then $\liminf_{N\to\infty} N^{-1} D^N(F^N) \ge  \liminf_{N\to\infty}D^N_{12}(F^N)$.
For $\Phi : \R^{dN}\to \R^{d}$, $\Phi\in C^1_b$, we have, with $F^N = h^N \gamma^N$,
$$
\begin{aligned}
D^N_{12}(F^N) &= \frac12\int_{\SS^N} a(v_1-v_2) \nabla_{\SS^N_{12}} \log h^N \cdot \nabla_{\SS^N_{12}} \log h^N \, h^N\,d\gamma^N\\
&= \frac12\sup_{\Phi : \R^{dN}\to \R^{d}}  \int_{\SS^N} a(v_1-v_2) \left( \nabla_{\SS^N_{12}} \log h^N  \Phi - \Phi \Phi /4   \right) \, h^N\,d\gamma^N \\
&= \frac12\sup_{\Phi}  \left\{ \int_{\SS^N} a(v_1-v_2) \nabla_{\SS^N_{12}}  h^N  \Phi \, d\gamma^N - \int_{\SS^N} a(v_1-v_2) \, \frac{\Phi \Phi}{4} \, h^N\,d\gamma^N \right\}.
\end{aligned}
$$

Choosing  $\Phi(V) = \varphi(v_1,v_2)$ we obtain, using \eqref{eq:gradSi},
\beqn\label{DN}
\begin{aligned}
D^N_{12}(F^N) 
&\ge \frac12\int_{\SS^N} a(v_1-v_2) \nabla_{\SS^N_{12}}  h^N \, \varphi \, d\gamma^N 
-\frac12 \int_{\SS^N} a(v_1-v_2) \, \frac{\varphi \varphi}{4} \, h^N\,d\gamma^N\\
&\ge \frac12\sum_{\alpha,\beta=1}^d \int_{\SS^N}  \nabla_{\SS^N}  h^N \cdot \left[(e_{1,\alpha}-e_{2,\alpha})\, a_{\alpha\beta}(v_1-v_2) \,\varphi_{\beta} \right]\, d\gamma^N \\
&\quad
-\frac12 \int_{\SS^N} a(v_1-v_2) \, \frac{\varphi \varphi}{4} \, h^N\,d\gamma^N.
\end{aligned}
\eeqn
We need an integration by parts formula for the first term on the right-hand side, thanks to \cite[Lemma 22]{KC}, for a function $A$ and a vector field $\Psi$, we have
$$
\int_{\SS^N} \left\{\nabla_{\SS^N} A(V) \cdot \Psi(V) + A(V)\Div_{\SS^N} \Psi(V) - \frac{d(N-1)-1}{dN}\, A(V)\, \Psi(V)\cdot V \right\}\, d\gamma^N(V)=0,
$$
with
\beqn\label{div}
\Div_{\SS^N} \Psi(V) = \Div\Psi(V) - \frac1N \sum_{i,j=1}^N\sum_{\beta=1}^d \partial_{v_{i,\beta}} \Psi_{j,\beta}(V) - \sum_{j=1}^N\sum_{\beta=1}^d V\cdot \nabla\Psi_{j,\beta}\, \frac{v_{j,\beta}}{|V|^2}.
\eeqn
Taking $\Psi(V) = (e_{1,\alpha}-e_{2,\alpha})\, a_{\alpha\beta}(v_1-v_2) \,\varphi_{\beta}$ we obtain
$$
\bal
&\int_{\SS^N}  \nabla_{\SS^N}  h^N \cdot (e_{1,\alpha}-e_{2,\alpha})\, a_{\alpha\beta}(v_1-v_2) \,\varphi_{\beta} \, d\gamma^N \\
&\qquad= 
-\int_{\SS^N}   h^N \Div_{\SS^N}\left[ (e_{1,\alpha}-e_{2,\alpha})\, a_{\alpha\beta} \,\varphi_{\beta} \right] d\gamma^N
+\frac{d(N-1)-1}{dN} \int_{\SS^N} h^N\, a_{\alpha\beta}\, \varphi_\beta \, (e_{1,\alpha}-e_{2,\alpha})\cdot V \, d\gamma^N.
\eal
$$
Since $(e_{1,\alpha}-e_{2,\alpha})\cdot V = (v_{1,\alpha} - v_{2,\alpha})$, when performing the summation $\alpha,\beta = 1$ to $d$ in the second term of the right-hand side of last equation, we obtain
$$
\int_{\SS^N} h^N a(v_1-v_2) (v_1 - v_2)\, \varphi\, d\gamma^N = 0.
$$
For the first term, thanks to \eqref{div},
$$
\bal
&\sum_{\alpha,\beta=1}^{d}\int_{\SS^N}   h^N \Div_{\SS^N}\left[ (e_{1,\alpha}-e_{2,\alpha})\, a_{\alpha\beta} \,\varphi_{\beta} \right] d\gamma^N \\
&\qquad
= \int_{\SS^N} \nabla_{12}\cdot (a(v_1-v_2)\,\varphi)\,  h^N \, d\gamma^N \\
&\qquad
-\sum_{\alpha,\beta=1}^{d}\frac{1}{|V|^2} \int_{\SS^N} \left\{ v_1\cdot \nabla_1(a_{\alpha\beta}\, \varphi_{\beta}) + v_2\cdot \nabla_2(a_{\alpha\beta}\, \varphi_{\beta})  \right\}
(v_{1,\alpha}-v_{2,\alpha})\,  h^N \, d\gamma^N 
\eal
$$

Getting back to \eqref{DN} with last expression, we split the integral over $(v_1,v_2)$ and $\SS^N(v_1,v_2) := \{ (v_3,\dots,v_N)\in \R^{d(N-2)};\; V \in \SS^N\}$, use that $|V|^2 = \EE N$ and $\int_{\SS^N(v_1,v_2)} h^N \, d\gamma^N = F^N_2$, which yields
\beqn\label{DN2}
\begin{aligned}
D^N_{12}(F^N) 
&\ge -\frac12    \iint \nabla_{12}\cdot (a(v_1-v_2)\,\varphi)\, F^N_2(v_1,v_2)\,dv_1\, dv_2
-\frac12 \iint a(v_1-v_2) \, \frac{\varphi \varphi}{4} \, F^N_2(v_1,v_2)\,dv_1\, dv_2 \\
&+O\left(\frac1N\right)\sum_{\alpha,\beta=1}^{d} \iint \left\{ v_1\cdot \nabla_1(a_{\alpha\beta}\, \varphi_{\beta}) + v_2\cdot \nabla_2(a_{\alpha\beta}\, \varphi_{\beta})  \right\}
(v_{1,\alpha}-v_{2,\alpha})\,  F^N_2(v_1,v_2)\,dv_1\, dv_2.
\end{aligned}
\eeqn
Passing to the limit $N\to\infty$, since $F^N_2 \wto f^{\otimes 2}$ we obtain
$$
\liminf_{N\to\infty}  D^N_{12}(F^N) \ge \frac12\int \left\{
- \nabla_{12}\cdot (a(v_1-v_2) \varphi)  -
a(v_1-v_2)\frac{\varphi\varphi}{4} \right\}\, f_1f_2\, dv_1\, dv_2 \ge D(f)-\eps
$$
and we conclude letting $\eps$ go to $0$.

\end{proof}

We define the Fisher information of $G\in \PPP(\R^{dN})$ that is absolutely continuous with respect to the Lebesgue measure by 
$$
I(G) := \int_{\R^{dN}} \frac{|\nabla_{\R^{dN}} G|^2}{G}\, dV.
$$
Moreover, for a probability measure $F\in \PPP(\SS^N(\EE))$ absolutely continuous with respect to $\gamma^N$, we define the relative Fisher's information by
\beqn\label{rel-fisher}
I(F | \gamma^N) := \int_{\SS^N(\EE)} \frac{|\nabla_{\SS^N} h|^2}{h}\, d\gamma^N,\quad
h = \frac{dF}{d\gamma^N},
\eeqn
where $\nabla_{\SS^N}$ stands for the gradient on $\SS^N(\EE)$.

We can now give the following result.

\begin{lemma}\label{lem:fisher}
Let $F^N_0\in \Psym(\SS^N(\EE))$  with finite relative Fisher information $I(F^N_0 | \gamma^N)$. For all $t>0$ consider the solution $F^N_t$ of the Landau master equation \eqref{Lmaster2}. Then we have
$$
I(F^N_t | \gamma^N) \le I(F^N_0 | \gamma^N ) .
$$
\end{lemma}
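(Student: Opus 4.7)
The plan is to mirror the grazing-collisions strategy used throughout Section~\ref{sec:app} and in Lemmas~\ref{lem:A2i}--\ref{lem:A4c}. For $\eps>0$, consider the $N$-particle Boltzmann master equation on $\SS^N(\EE)$ with kernel $b_\eps$ satisfying the grazing-collisions assumption \eqref{eq:grazing}, and let $F^{N,\eps}_t$ be its solution starting from $F^N_0$. Since the Landau master equation \eqref{Lmaster2} is recovered from the convergence of the generators \eqref{eq:generateurLandau} as $\eps\to 0$, it suffices to establish the monotonicity inequality at the Boltzmann level with a bound independent of $\eps$, and then pass to the limit.

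\textbf{Step 1 (Boltzmann level).} First I would prove, uniformly in $\eps$, that
$$
I(F^{N,\eps}_t \mid \gamma^N) \le I(F^{N,\eps}_0 \mid \gamma^N).
$$
The Boltzmann generator $G^{N,\eps}_B$ is a sum over pairs $(i,j)$ of binary jump operators of Maxwellian type. The monotonicity of Fisher information along the \emph{binary} Boltzmann operator with Maxwellian collisions is classical (Toscani, Villani), and the $N$-particle analogue on $\SS^N(\EE)$ follows by a superposition/convexity argument: differentiating $t\mapsto I(F^{N,\eps}_t\mid\gamma^N)$ decomposes into a sum of two-body contributions, each non-positive. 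To handle the sphere constraint cleanly, I would lift $h^{N,\eps}_t=dF^{N,\eps}_t/d\gamma^N$ to a function $\widetilde h^{N,\eps}_t$ on $\R^{dN}$ via \eqref{eq:tildeh} with a fixed smooth cut-off in $(\EE,\MM)$, and use the Fubini-like identity \eqref{fubbini}: this reduces the relative Fisher information on $\SS^N(\EE)$ to a standard Fisher information of $\widetilde h^{N,\eps}_t$ on $\R^{dN}$, up to a multiplicative constant depending only on the cut-off.

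\textbf{Step 2 (passage to the limit).} Next I would let $\eps \to 0$. The grazing-collisions convergence $F^{N,\eps}_t \to F^N_t$ is the same as in \cite{Vi2} and was used repeatedly earlier (see e.g.\ the end of the proofs of Lemmas~\ref{lem:A4a}--\ref{lem:A4c}). Since the relative Fisher information $F\mapsto I(F\mid\gamma^N)$ is jointly convex in $F$ and lower semicontinuous for the weak convergence of probability measures (it can be written as a supremum of affine continuous functionals), one obtains
$$
I(F^N_t \mid \gamma^N) \le \liminf_{\eps\to 0} I(F^{N,\eps}_t \mid \gamma^N) \le I(F^N_0 \mid \gamma^N),
$$
which is the desired inequality.

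\textbf{Main obstacle.} The serious step is Step~1: establishing Fisher information monotonicity for the $N$-particle Boltzmann system on the constrained manifold $\SS^N(\EE)$, because the intrinsic gradient $\nabla_{\SS^N}$ couples all coordinates and the pair collision dynamics must be separated from the sphere geometry. The lifting \eqref{eq:tildeh}--\eqref{fubbini}, together with the vanishing identity $a(z)z=0$ used already in the derivation of \eqref{eq:RelativeEntropy}, is the natural device to reduce the computation to Euclidean space, where one can invoke (or adapt pair by pair) the binary Maxwellian Fisher-information monotonicity.
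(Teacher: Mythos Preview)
Your proposal is correct and follows essentially the same route as the paper: lift from $\SS^N(\EE)$ to $\R^{dN}$ via \eqref{eq:tildeh}--\eqref{fubbini}, establish Fisher-information monotonicity for the $N$-particle Boltzmann dynamics with kernel $b_\eps$, and then pass to the grazing-collisions limit using weak convergence and lower semicontinuity of the Fisher information. The only minor difference is that for Step~1 the paper invokes \cite[Lemma~7.4]{MMchaos} directly for the $N$-particle Boltzmann Fisher monotonicity on $\R^{dN}$, rather than reassembling it from the Toscani--Villani binary result by superposition as you outline.
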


\begin{proof}[Proof of Lemma \ref{lem:fisher}]

Denote $h^N_0 := dF^N_0/ d\gamma^N$ and, for all $t\ge 0$, $h^N_t := dF^N_t/ d\gamma^N$.
Consider $\widetilde h^N_t$ defined on $\R^{dN}$ given by \eqref{eq:tildeh} and define then $\widetilde F^N_t = \widetilde h^N_t \mathscr L$ a solution of \eqref{eq:R}, where $\mathscr L$ is the Lebesgue measure on $\R^{dN}$. Following \cite[Lemma 7.4]{MMchaos}, we claim that is enough to prove that
$$
I(\widetilde F^N_t) := \int_{\R^{dN}} \frac{|\nabla_{\R^{dN}} \widetilde h^N_t|^2}{\widetilde h^N_t}\, dV \le
\int_{\R^{dN}} \frac{|\nabla_{\R^{dN}} \widetilde h^N_0|^2}{\widetilde h^N_0}\, dV =: I(\widetilde F^N_0).
$$ 
Indeed, this equation, \eqref{grad}, \eqref{fubbini} and the conservation of momentum and energy yield
$$
\bal
I(\widetilde F^N_t) &= |\nabla_{\perp} \log(\rho\eta)|^2 
+ \left(\int_{\R^+\times \R^d} B\left(\rho(\EE),\eta(\MM)\right)\, d\EE\, d\MM \right) \left(\int_{\SS^N(\EE)} 
\frac{|\nabla_{\SS^N} h^N_t|^2}{h^N_t} \, d\gamma^N\right) \\
&\le
 |\nabla_{\perp} \log(\rho\eta)|^2 
+ \left(\int_{\R^+\times \R^d} B\left(\rho(\EE),\eta(\MM)\right)\, d\EE\, d\MM \right) \left(\int_{\SS^N(\EE)} 
\frac{|\nabla_{\SS^N} h^N_0|^2}{h^N_0} \, d\gamma^N\right)
= I(\widetilde F^N_0 ),
\eal
$$
which implies, dropping the time independent terms,
$$
I(F^N_t | \gamma^N) \le I(F^N_0 | \gamma^N).
$$

Now, let $F^N_{t,\eps} \in \Psym(\SS^N(\EE))$ be the solution of the Boltzmann master equation \eqref{eq:MasterBoltzmann}-\eqref{eq:BoltzmannGenerator} with collision kernel $b_\eps$ satisfying the grazing collisions assumptions \eqref{eq:grazing} and initial datum $F^N_0 \in \Psym(\SS^N(\EE))$. Then we have from \cite[Lemma 7.4]{MMchaos}, for all $t\ge 0$,
$$
I(\widetilde F^N_{t,\eps} ) \le I( \widetilde F^N_0  ),
$$
where $\widetilde F^N_0, \widetilde F^N_{t,\eps} \in \Psym(\R^{dN})$ are constructed as before.

Since $\widetilde F^N_{t,\eps}$ weakly converges towards $\widetilde F^N_t$ when $\eps\to0$ and the Fisher information functional is weakly lower semicontinuous, we obtain
$$
I(\widetilde F^N_t) \le \liminf_{\eps\to 0} I(\widetilde F^N_{t,\eps}) \le I( \widetilde F^N_0  )
$$
and that concludes the proof.

\end{proof}

Now, with the definitions of relative entropy \eqref{rel-ent}, relative Fisher information \eqref{rel-fisher} and the notion of entropic chaos, described below, we are able to state our main theorem of this section, concerning the propagation of entropic chaos. 

Let $F^N$ be a sequence of probability measures $\SS^N(\EE)$ such that $F^N_1$ weakly converges to $f$ in measure sense, for some $f\in \PPP(\R^d)$. We say that $F^N$ is entropically $f$-chaotic if
\beqn\label{entropic-chaos}
\frac{H(F^N | \gamma^N)}{N} \xrightarrow[N\to\infty]{} H(f | \gamma).
\eeqn
For more information on entropic chaos we refer to \cite{CCLLV,HaurayMischler,KC}.

\begin{thm}\label{thm:PropEntChaos}
Let $f_0\in \PPP(\R^d)$ and $F^N_0\in \Psym(\SS^N(\EE))$ that is $f_0$-chaotic. Consider then, for all $t>0$, the solution $F^N_t$ of the Landau master equation 
for Maxwellian molecules \eqref{Lmaster2}
with initial condition $F^N_0$, and the solution $f_t$ of the limit Landau equation for Maxwellian molecules \eqref{eq:landau}-\eqref{eq:landau2} with initial data $f_0$.

Then we have:
\begin{enumerate}

\item If $F^N_0$ is entropically $f_0$-chaotic, then for all $t>0$ $F^N_t$ is entropically $f_t$-chaotic, more precisely
$$
\frac1N \, H(F^N_t | \gamma^N) \longrightarrow H(f_t|\gamma)\qquad\text{as}\qquad N\to\infty.
$$

\item Consider $f_0\in \PPP_6(\R^d)$ with $I(f_0 | \gamma)<\infty$ and $F^N_0=[f^{\otimes N}_0]_{\SS^N(\EE)} \in \Psym(\SS^N(\EE))$. Then, for all $t>0$, $F^N_t$ is entropically $f_t$-chaotic, more precisely, for any $0<\eps < 18 [5(7d+6)^2(d+9)]^{-1}$ there exists a constant $C:=C(\eps)>0$ such that
$$
\sup_{t\ge 0} \left| \frac{1}{N} \,  H(F^N_t | \gamma^N) - H(f_t|\gamma)\right| 
\le C N^{-\eps}.
$$


\item Consider $f_0\in \PPP_6(\R^d)$ with $I(f_0 | \gamma)<\infty$ and $F^N_0=[f^{\otimes N}_0]_{\SS^N(\EE)} \in \Psym(\SS^N(\EE))$. Then for all $N$ it holds
$$
\frac1N \, H(F^N_t | \gamma^N) \le p(t),
$$
for some polynomial function $p(t)\to 0$ as $t\to\infty$.

\end{enumerate}

\end{thm}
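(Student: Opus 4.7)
The plan is to build everything on the entropy identities~\eqref{ent1}--\eqref{ent2}, together with the lower semicontinuity of Lemma~\ref{lem:semicont}, the Fisher information propagation of Lemma~\ref{lem:fisher}, and the propagation of chaos results of Section~\ref{sec:app}.

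For part~(1), set $A_N(t) := N^{-1}H(F^N_t|\gamma^N)$ and $B_N(t) := \int_0^t N^{-1}D^N(F^N_s)\,ds$, so that \eqref{ent1} reads $A_N(t)+B_N(t)=A_N(0)$. Theorem~\ref{thm:chaos} makes $F^N_t$ weakly $f_t$-chaotic for every $t>0$, so Lemma~\ref{lem:semicont} combined with Fatou's lemma yields
$$
\liminf_N A_N(t) \ge H(f_t|\gamma), \qquad \liminf_N B_N(t) \ge \int_0^t D(f_s)\,ds.
$$
The entropic chaos of the initial datum gives $A_N(0) \to H(f_0|\gamma)$, which by~\eqref{ent2} equals $H(f_t|\gamma)+\int_0^t D(f_s)\,ds$. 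Superadditivity of $\liminf$ then forces both inequalities above to be equalities, and the usual squeezing argument upgrades the $\liminf$ of $A_N(t)$ to a full limit, proving~(1).

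For part~(2), the initial datum $F^N_0 = [f_0^{\otimes N}]_{\SS^N(\EE)}$ comes with three quantitative ingredients from~\cite{KC}: an initial rate $|A_N(0) - H(f_0|\gamma)| \le CN^{-\eps'}$, a uniform Fisher bound $N^{-1}I(F^N_0|\gamma^N) \le C(1+I(f_0|\gamma))$, and moment bounds on $F^N_0$. By Lemma~\ref{lem:fisher} the Fisher bound propagates in time, by Lemma~\ref{lem:A1ii} so does the moment bound, and Theorem~\ref{thm:Wchaos}(1) provides a uniform-in-time quantitative Wasserstein chaos $N^{-1}W_1(F^N_t, f_t^{\otimes N}) \le CN^{-\eps}$. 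I would then apply pointwise in $t$ the entropic-chaos transfer criterion of Hauray--Mischler~\cite{HaurayMischler}, in its Boltzmann-sphere form from~\cite{KC}: under uniform Fisher and moment bounds, a polynomial Wasserstein chaos rate is upgraded to a polynomial relative-entropy chaos rate, with a constant depending only on the Fisher and moment bounds. Since those bounds are $t$-independent, the resulting estimate $|A_N(t) - H(f_t|\gamma)| \le CN^{-\eps}$ is uniform in $t$.

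For part~(3) I would mimic the two-regime splitting from the proof of Theorem~\ref{thm:Wchaos}(2). On the one hand, (2) combined with the exponential $H$-theorem for Landau~\cite{Vi1}, $H(f_t|\gamma) \le Ce^{-\lambda t}$, gives $A_N(t) \le CN^{-\eps} + Ce^{-\lambda t}$. On the other hand, the $L^2$ spectral gap of Kiessling--Lancellotti~\cite{KL} together with the bound $\|h^N_0-1\|_{L^2(\gamma^N)} \le A^N$ available for this choice of initial data yields $\|h^N_t-1\|_{L^2(\gamma^N)} \le A^N e^{-\lambda_1 t}$, which converts into a relative-entropy estimate $A_N(t) \le CA^N e^{-\lambda_1 t/2}$ via a Csisz\'ar--Kullback-type bound on $\SS^N(\EE)$ (controlling the tails of $x\log x - x + 1$ by $(x-1)^2$ with the help of the Fisher bound). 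Choosing $N(t) \sim c\,t$ and taking the minimum of the two estimates produces the required polynomial $p(t)\to 0$, uniformly in $N$. The hard part of the whole argument is the uniformity in $t$ in~(2): Lemma~\ref{lem:fisher} is the crucial new input, since its proof (based on the Fubini identity~\eqref{fubbini} and on passage to the grazing collisions limit from the Boltzmann case) is precisely what makes the Hauray--Mischler transfer applicable with a $t$-independent constant.
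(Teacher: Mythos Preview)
Your arguments for parts~(1) and~(2) are essentially identical to the paper's: both use the entropy identities~\eqref{ent1}--\eqref{ent2} together with Lemma~\ref{lem:semicont} for~(1), and for~(2) both combine the Fisher information propagation of Lemma~\ref{lem:fisher} with the quantitative Wasserstein chaos of Theorem~\ref{thm:Wchaos}(1) and the transfer criterion from~\cite{KC} (which you call the Hauray--Mischler criterion; in the paper it is \cite[Theorem~31]{KC}, and it indeed routes through a $W_2/\sqrt{N}$ bound interpolated from $W_1/N$ via~\eqref{eq:W2W1}).

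Part~(3) is where you diverge from the paper. You propose a two-regime splitting directly at the entropy level: combine~(2) with the limit $H$-theorem for large $N$, and for small $N$ convert the $L^2$ spectral-gap decay $\|h^N_t-1\|_{L^2(\gamma^N)} \le A^N e^{-\lambda_1 t}$ into an entropy bound via the elementary inequality $x\log x - x + 1 \le (x-1)^2$ on $[0,\infty)$ (so that $N^{-1}H(F^N_t|\gamma^N) \le N^{-1}A^{2N}e^{-2\lambda_1 t}$; the Fisher bound you mention is not actually needed here, and the phrase ``Csisz\'ar--Kullback-type'' is misleading since that inequality goes the other way). This works. The paper takes a cleaner route: it applies the HWI inequality on $\SS^N(\EE)$ to get $N^{-1}H(F^N_t|\gamma^N) \le C\sqrt{N^{-1}I(F^N_t|\gamma^N)}\cdot N^{-1/2}W_2(F^N_t,\gamma^N)$, uses Lemma~\ref{lem:fisher} for the first factor, interpolates $W_2$ back to $W_1$, and then invokes Theorem~\ref{thm:Wchaos}(2) --- where the two-regime argument has \emph{already} been carried out at the $W_1$ level. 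So the paper reuses existing machinery, while your approach redoes the splitting one level up; both yield the same polynomial~$p(t)$.
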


\begin{proof}[Proof of Theorem \ref{thm:PropEntChaos} (1)]
The idea is from \cite{MMchaos}. Using \eqref{ent1}, \eqref{ent2} and the entropic chaoticity at initial time, one has
$$
\bal
&\frac1N \, H(f^N_t | \gamma^N) + \int_0^t \frac1N\, D^N(F^N_s)\,ds 
=\frac1N \, H(f^N_0 | \gamma^N) \\
&\quad \xrightarrow[N\to\infty]{} 
H(f_0|\gamma) = H(f_t|\gamma) + \int_0^t D(f_s) \, ds.
\eal
$$
By Lemma~\ref{lem:semicont} one also has
$$
\liminf_{N\to\infty} \left( H(f^N_t | \gamma^N) + \int_0^t \frac1N\, D^N(F^N_s)\,ds \right)
\ge H(f_t|\gamma) + \int_0^t D(f_s) \, ds,
$$
and we can conclude with these two last equations together with Lemma~\ref{lem:semicont}.

\end{proof}

\begin{proof}[Proof of Theorem \ref{thm:PropEntChaos} (2)]

From Lemma~\ref{lem:fisher} we know that, for all $t\ge0$, $N^{-1}I(F^N_t | \gamma^N)\le N^{-1} I(F^N_0 | \gamma^N)$ and the later one is bounded by 
construction \cite{KC}, we deduce then that the normalised relative Fisher's information $N^{-1}I(F^N_t | \gamma^N)$ is bounded. 
Since the limit Landau equation for Maxwellian molecules propagates moments and the Fisher's information's bound \cite{Vi1,Vi3}, 
we have, for all $t>0$, $M_6(f_t)$ and $I(f_t | \gamma)$ bounded.

We can then apply \cite[Theorem 31]{KC} to $F^N_t$ and we obtain that for any 
$\beta< (7d+6)^{-1}$ there exists $C'=C'(\beta)>0$ such that
\begin{equation*}
\begin{aligned}
\left|  \frac{1}{N} \,  H(F^N_t | \gamma^N) - H(f_t | \gamma)  \right| 
&\le C' \left( \frac{W_2(F^N_t,f^{\otimes N}_t)}{\sqrt{N}} + N^{-\beta} \right).
\end{aligned}
\end{equation*}
We have then to estimate the first term of the right-hand side. From \cite{HaurayMischler}, the following estimation holds,
\beqn\label{eq:W2W1}
\frac{W_2(F^N_t,f^{\otimes N}_t)}{\sqrt{N}} \le C \left( \frac{M_6(F^N_t,f^{\otimes N}_t)}{N} \right)^{1/10} \left( \frac{W_1(F^N_t,f^{\otimes N}_t)}{N} \right)^{2/5}
\eeqn
where $M_6(F^N_t,f^{\otimes N}_t) = M_6(F^N_t) + M_6(f^{\otimes N}_t)$. We observe that $N^{-1} M_6(F^N_t,f^{\otimes N}_t)$ is bounded since $N^{-1}M_6(f^{\otimes N}_t) = M_6(f_t)$, $N^{-1} M_6(F^N_t) \le C \, N^{-1} M_6(F^N_0)$ by Lemma~\ref{lem:A1ii} and $N^{-1} M_6(F^N_0)$ is bounded by construction, thanks to the assumption $M_6(f_0)$ finite. 

Finally, Theorem~\ref{thm:Wchaos} and last equation \eqref{eq:W2W1} imply that for any $\epsilon < 9[(7d+6)^2(d+9)]^{-1}$ and any $\beta< (7d+6)^{-1}$ there exists a positive constant $C=C(\epsilon,\beta)$ such that
\begin{equation}\label{eq:HGNt}
\begin{aligned}
\left|  \frac{1}{N} \,  H(F^N_t | \gamma^N) - H(f_t | \gamma)  \right| 
&\le C \left( N^{-2\epsilon/5} + N^{-\beta} \right),
\end{aligned}
\end{equation}
which concludes the proof.

\end{proof}

\begin{proof}[Proof of Theorem \ref{thm:PropEntChaos} (3)]
By the HWI inequality \cite[Theorem 30.21]{VillaniOTO&N}, for all $t\ge0$, we have
$$
\frac{H(F^N_t | \gamma^N )}{N} \le \frac{\pi}{2}\,\sqrt{\frac{I(F^N_t | \gamma^N)}{N} } \, \frac{W_2(F^N_t,\gamma^N)}{\sqrt N}.
$$
From Lemma~\ref{lem:fisher} we have $N^{-1} I(F^N_t | \gamma^N) \le N^{-1} I(F^N_0 | \gamma^N)\le C$ for some constant $C>0$ independent of $N$, by construction. Moreover, thanks to Lemma~\ref{lem:A1ii} and \eqref{eq:W2W1} we deduce
$$
\frac{W_2(F^N_t,\gamma^N)}{\sqrt N} \le C \left( \frac{W_1(F^N_t,\gamma^N)}{N} \right)^{2/5}.
$$
Gathering these two estimates with point $(2)$ in Theorem~\ref{thm:Wchaos} it follows
$$
\frac{H(F^N_t | \gamma^N )}{N} \le C \left( \frac{W_1(F^N_t,\gamma^N)}{N} \right)^{2/5} \le p(t),
$$
for a polynomial function $p(t)\to 0$ as $t\to\infty$.

\end{proof}


\bibliographystyle{acm}
\bibliography{bib-LandauChaos}

\end{document}